\numberwithin{equation}{section}
\numberwithin{figure}{section}
\newtheorem{lemma}[equation]{Lemma}
\newtheorem{theorem}[equation]{Theorem}
\newtheorem{prop}[equation]{Proposition}
\newtheorem{cor}[equation]{Corollary}
\newtheorem{conj}[equation]{Conjecture}
\theoremstyle{definition}
\newtheorem{question}[equation]{Question}
\newtheorem{defn}[equation]{Definition}
\newtheorem{eg}[equation]{Example}
\newtheorem{rk}[equation]{Remark}
\newtheorem{rks}[equation]{Remarks}
\newtheorem{warning}[equation]{Warning}
\renewcommand{\ge}{\geqslant}
\renewcommand{\le}{\leqslant}
\newcommand{\Ahat}{\hat A_1}
\newcommand{\bul}{{\scriptstyle\bullet}}
\newcommand{\core}{\mathsf{core}}
\newcommand{\cc}{\mathsf{c}}
\newcommand{\ep}{\varepsilon}
\newcommand{\Ker}{\mathsf{Ker}}
\newcommand{\im}{\mathsf{Im}}
\newcommand{\npj}{\gamma}
\newcommand{\one}{\mathds{1}}
\newcommand{\proj}{\mathsf{(proj)}}
\newcommand{\Rad}{\mathsf{Rad}}
\newcommand{\Soc}{\mathsf{Soc}}
\newcommand{\St}{\mathsf{St}}
\newcommand{\op}{\mathsf{op}}
\newcommand{\bC}{\mathbb{C}}
\newcommand{\bF}{\mathbb{F}}
\newcommand{\bR}{\mathbb{R}}
\newcommand{\bZ}{\mathbb{Z}}
\title{The non-projective part of the tensor powers of a module}
\author{Dave Benson}
\address{Institute of Mathematics, University of Aberdeen, Aberdeen
  AB24 3UE, Scotland, United Kingdom}
\author{Peter Symonds}
\address{School of Mathematics, University of Manchester,
Oxford Road, Manchester M13 9PL, United Kingdom}
\keywords{Projective module, core, tensor powers, modular
  representation, Green ring, commutative Banach algebra}
\subjclass[2010]{20C20, 46J99}
\begin{document}

\begin{abstract}
Let $M$ be a finite dimensional modular representation of
a finite group $G$. We consider the dimensions of the
non-projective part of the tensor powers $M^{\otimes n}$ of $M$, and we
write $\npj_G(M)$ for the $\limsup$ of their $n$th roots. We investigate the properties of the
invariant $\npj_G(M)$, using tools from representation theory,
and from the theory of commutative Banach algebras.
\end{abstract}

\maketitle

\section{Introduction}

This paper is exploratory in nature. We introduce 
a new invariant $\npj_G(M)$ of a $kG$-module $M$ (where, 
throughout the paper, $G$ is a
finite group, $k$ is a field of characteristic $p$, and
we only consider finitely generated $kG$-modules). 
This invariant is difficult to 
compute, but captures interesting asymptotic properties of
tensor products. We have carried out a large number of
computations, and we present a selection of them near
the end of the paper.

The invariant $\npj_G(M)$ is similar in nature to the \emph{complexity}
$c_G(M)$ \cite{Alperin/Evens:1981a,Carlson:1981a,Kroll:1984a}, 
which describes the polynomial rate of
growth of a minimal resolution of $M$, or equivalently the
rate of growth of the non-projective part of $M \otimes
\Omega^n(k)$ (here and for the remainder of the paper, $\otimes$
means $\otimes_k$ with diagonal group action). 
Instead, we examine the rate of growth of
the dimension of the non-projective part of $M^{\otimes n}$. It turns out
that in this case the growth is usually 
exponential, so the appropriate
way to measure it is to consider $\limsup$ of the $n$th root.

\begin{defn}
For a $kG$-module $M$, we write $M=M' \oplus \proj$ where $M'$
has no projective direct summands and $\proj$ denotes a projective module.
Then $M'$ is called the \emph{core} of $M$ and denoted $\core_G(M)$.
We write $\cc_n^G(M)$ for the dimension of $\core_G(M^{\otimes n})$;
it is well defined by the Krull--Schmidt Theorem. We define
\[ \npj_G(M) = \limsup_{n\to\infty}\sqrt[n]{\cc_n^G(M)}. \]
This is also equal to $1/r$, where $r$ is the radius of convergence
of the generating function
\[ f_M(t)=\sum_{n=0}^\infty \cc_n^G(M)t^n, \]
see Corollary~\ref{co:npj}.
\end{defn}

In the case where $M$ is algebraic (i.e., there are only 
finitely many isomorphism classes of indecomposable summands
of tensor powers of $M$, see
Alperin~\cite{Alperin:1976b} or \S II.5 of Feit~\cite{Feit:1982a}) 
the invariant is a root of the minimal polynomial of
$M$ in the Green ring $a(G)$. Thus $\npj_G(M)$ is closely
related to the discussion in Chapter~3 of the book of
Etingof, Gelaki, Nikshych and 
Ostrik~\cite{Etingof/Gelaki/Nikshych/Ostrik:2015a}. 
Recall that $a(G)$ is the
ring whose generators $[M]$ correspond to isomorphism
classes of $kG$-modules, and whose relations are given
by $[M\oplus N]=[M]+[N]$ and $[M\otimes N]=[M][N]$.
Additively, the ring $a(G)$ is a free abelian group on the basis
elements $[M]$ with $M$ indecomposable. In general, the
indecomposables are unclassifiable (wild representation type)
and $a(G)$ is very large and hard to describe. It usually
has nilpotent elements of the form $[M]-[N]$, but has a large semisimple
quotient \cite{Benson/Carlson:1986a,Zemanek:1971a,Zemanek:1973a}.
See Section~\ref{se:Banach} for more notations concerning
Green rings.

All modules are algebraic if and only if the Sylow $p$-subgroups
of $G$ are cyclic. For an example, if $G$ is a cyclic group of
order five, $k$ is a field of characteristic five, and $M$ is the
two dimensional indecomposable $kG$-module, then the non-projective
summands of $M^{\otimes n}$ follow a Fibonacci pattern. We have
$\cc_{2n}^G(M)\approx \tau^{2n+1}$ and $\cc_{2n+1}^G(M)\approx
2\tau^{2n+1}$, where
\[ \tau=(1+\sqrt{5})/2 = 2\cos(\pi/5)\approx 1.618034 \]
is the golden ratio, and so $\npj_G(M)=\tau$ in this case.
We calculate the value of $\npj_G(M)$ for all modules 
for a cyclic group of order $p$ in Theorem~\ref{th:Z/p}.

Most modules are not algebraic.
In the case where $M$ is not algebraic, we interpret
$\npj_G(M)$ as the spectral radius of $[M]$ as an
element of a suitable completion of the Green ring.
This brings in the theory of commutative Banach algebras,
playing the role of a sort of infinite dimensional
Perron--Frobenius theory.

In common with the complexity, the value of $\npj_G(M)$ is
determined by the restrictions of $M$ to elementary
abelian $p$-subgroups of $G$. For this reason, most of
our examples are modules for elementary abelian $p$-groups.
The examples were worked out using the computer algebra system
\textsf{Magma}~\cite{Bosma/Cannon:1996a}. Note that this paper is
written using left modules, while \textsf{Magma} uses right modules,
so the matrices have been transposed.\medskip

The following theorem
summarises the results of this paper.

\begin{theorem}
	\label{th:omni}
The invariant $\npj_G(M)$ has the following properties:
\begin{enumerate}
\item We have
  $\npj_G(M)=\displaystyle\lim_{n\to\infty}\sqrt[n]{\cc_n^G(M)}
= \inf_{n\ge 1}\sqrt[n]{\cc_n^G(M)}$.
\item We have $0\le \npj_G(M)\le \dim M$.
\item A $kG$-module $M$ is $p$-faithful 
(Definition \ref{def:p-faithful}) if and only if $\npj_G(M) < \dim M$.
\item A $kG$-module $M$ is projective if and only if $\npj_G(M)=0$. 
Otherwise $\npj_G(M)\ge 1$.
\item If $p$ divides $|G|$ then a $kG$-module $M$ is endotrivial if and only if $\npj_G(M)=1$.
\item If a $kG$-module $M$ is neither projective nor endotrivial then
  $\npj_G(M)\ge \sqrt 2$.
\item If $\npj_G(M)=\sqrt{2}$ then $M \otimes M^*\otimes M \cong M
  \oplus M \oplus \proj$.
\item We have $\npj_G(M^*)=\npj_G(M)$.
\item If $0 \to M_1 \to M_2 \to M_3 \to 0$ is a short exact sequence
of $kG$-modules
  then each $\npj_G(M_i)$ is at most the sum of the other two.
\item We have $\max\{\npj_G(M),\npj_G(N)\}\le \npj_G(M\oplus N) \le \npj_G(M) + \npj_G(N)$.
\item If $N$ is isomorphic to a direct sum of $m$ copies of $M$ then
  $\npj_G(N)=m.\npj_G(M)$.
\item We have $\npj_G(k\oplus M)=1+\npj_G(M)$.
\item We have $\npj_G(M\otimes N) \le \npj_G(M)\npj_G(N)$.
\item We have $\npj_G(M^{\otimes m}) = \npj_G(M)^m$.
\item We have $\npj_G(\Omega M) = \npj_G(M)$.
\item If $H\le G$ then we have $\npj_H(M)\le \npj_G(M)$.
\item We have $\npj_G(M) = \displaystyle\max_{E\le G}\npj_E(M)$, where the
maximum is taken over the elementary abelian $p$-subgroups $E\le G$.
\item If $M$ is a one dimensional module and $p$ divides $|G|$  then for every $n\in  \bZ$ we have $\npj_G(\Omega^n(M))=1$.
\end{enumerate}
\end{theorem}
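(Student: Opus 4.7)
The plan is to reduce the statement to the case $n=0$ and then apply property (v). More precisely, property (xv) asserts $\npj_G(\Omega M)=\npj_G(M)$; iterating (and using the analogous statement for $\Omega^{-1}$) immediately gives $\npj_G(\Omega^n M)=\npj_G(M)$ for every $n\in\bZ$. So it suffices to prove $\npj_G(M)=1$ for a one dimensional $kG$-module $M$.

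Next I would verify that a one dimensional module $M$ is endotrivial. Since $\dim_k M=1$, the natural evaluation map $M\otimes M^*\to k$ is a $kG$-module isomorphism (both sides are one dimensional and the map is nonzero). In particular, $M\otimes M^*\cong k$ with no projective summand, so $M$ is endotrivial by definition.

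Then I would check that $M$ is not projective, which is where the hypothesis $p\mid |G|$ enters. Let $P$ be a non-trivial Sylow $p$-subgroup of $G$. Since $\mathrm{char}(k)=p$, the group $k^\times$ has no elements of order $p$, so any homomorphism $P\to k^\times$ is trivial; thus $M\!\!\downarrow_P\, \cong k$. The trivial $kP$-module is not projective for $P\ne 1$, and projectivity is detected on restriction to a Sylow $p$-subgroup, so $M$ itself is not projective.

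Having shown $M$ is endotrivial but not projective, property (v) (combined with the hypothesis $p\mid |G|$) gives $\npj_G(M)=1$, completing the proof. There is no real obstacle here — the only content is locating the right earlier items of Theorem~\ref{th:omni} and recording the elementary observations that a one dimensional module is automatically endotrivial and restricts trivially to a Sylow $p$-subgroup.
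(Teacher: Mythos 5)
Your argument is correct, and it addresses the one part of the theorem that the paper proves on the spot, namely (xviii) (the other seventeen items are pointers to results proved elsewhere in the paper, which you quite reasonably take as given). Your reduction is the same as the paper's first step: use (xv) to pass from $\Omega^n(M)$ to $M$; the case of negative $n$, which you wave at, is easily filled using (viii), since $(\Omega^{-1}N)^*\cong\Omega(N^*)$ gives $\npj_G(\Omega^{-1}N)=\npj_G(\Omega(N^*))=\npj_G(N^*)=\npj_G(N)$. Where you diverge is the base case $\npj_G(M)=1$ for $\dim M=1$: the paper quotes Lemma~\ref{le:dimM}\,(iv), whose proof is the elementary squeeze $1\le\npj_G(M)\le\dim M=1$ (the lower bound because $M$ is non-projective, so $\cc_n^G(M)\ge 1$ by Lemma~\ref{le:powerproj}; the upper bound because $\cc_n^G(M)\le(\dim M)^n$). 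You instead observe that $M\otimes M^*\cong k$, so $M$ is endotrivial and non-projective, and invoke part (v), i.e.\ Theorem~\ref{th:endotriv}. That is logically sound and not circular, but the direction of (v) you need (endotrivial implies $\npj_G(M)=1$) is the expensive one: in the paper it rests on Carlson's restriction theorem via Theorem~\ref{th:E}, Dade's classification of endotrivial modules over elementary abelian groups (Proposition~\ref{pr:Dade}), and Theorem~\ref{th:Omega}. So your route is a clean one-liner once (v) is available, while the paper's route uses only the first, elementary lemmas of Section~2; for a one-dimensional module the heavy machinery buys nothing, since the squeeze argument already closes the case. Your Sylow-restriction verification that $M$ is not projective is fine, though it can be shortened: nonzero projective $kG$-modules are free on restriction to a Sylow $p$-subgroup, hence have dimension at least $p>1$.
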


The proofs of these can be found as follows:
(i) in Theorem~\ref{th:inf},
(ii) and (iv) in Lemma~\ref{le:dimM},
(iii) in Theorem~\ref{th:p-faithful},
(v) in Theorem~\ref{th:endotriv},
(vi) in Theorem~\ref{th:sqrt2},
(vii) in Theorem~\ref{co:MMM},
(viii) in Lemma~\ref{le:dual},
(ix) in Corollary~\ref{co:ses},
(x) in Theorem~\ref{th:seq-sum},
(xi) in Theorem~\ref{th:m.M},
(xii) in Theorem~\ref{th:k-plus-M},
(xiii) in Theorem~\ref{th:tensor},
(xiv) in Theorem~\ref{th:tensor-powers},
(xv) in Theorem~\ref{th:Omega},
(xvi) in Lemma~\ref{le:subgroup},
and (xvii) in Theorem~\ref{th:E}.
Part (xviii) follows by combining part~(xv) with Lemma~\ref{le:dimM}\,(iv).

We use the spectral theory of commutative
Banach algebras to connect the invariant $\npj_G(M)$ to the structure of the
Green ring $a(G)$. Recall from Benson and
Parker~\cite{Benson/Parker:1984a} 
that a \emph{species} of
$a(G)$ is a ring homomorphism $s\colon a(G) \to \bC$
(we avoid the Banach algebra term ``character'' for
obvious reasons).
We say that a species $s$ is \emph{core-bounded} if for all
$kG$-modules $M$ we have $|s([M])|\le\dim\core_G(M)$.
The proof of the following theorem can be found in Section~\ref{se:Banach}.

\begin{theorem}
If $M$ is a $kG$-module then $\npj_G(M)$ is the supremum of $|s([M])|$,
where $s$ runs over the core-bounded species $s\colon a(G)\to \bC$.
Furthermore, there exists a core-bounded species $s$ of $a(G)$ such
that $s([M])=\npj_G(M)$.
\end{theorem}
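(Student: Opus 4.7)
The plan is to realize the Green ring $a(G)$, modulo its ideal $\mathfrak{p}$ of virtual projectives, as a dense subring of a commutative complex Banach algebra $A$, and then to invoke the Gelfand theory of characters together with the spectral radius formula. To build $A$, I use that the nonprojective indecomposables form a $\bZ$-basis of $a(G)/\mathfrak{p}$, and define a norm on the complexification $(a(G)/\mathfrak{p})\otimes_\bZ\bC$ by $\|[M]\|=\dim M$ on such indecomposables, extended $\ell^1$-linearly; equivalently, $\|[M]\|=\dim\core_G(M)$ for any module $M$. The crucial property to verify is submultiplicativity: if $M=\core_G(M)\oplus P$ and $N=\core_G(N)\oplus Q$ with $P,Q$ projective, then the three cross-summands of $M\otimes N$ involving $P$ or $Q$ are projective, so $\core_G(M\otimes N)\cong\core_G(\core_G(M)\otimes\core_G(N))$ and hence
\[
\dim\core_G(M\otimes N)\le\dim\core_G(M)\cdot\dim\core_G(N).
\]
Subadditivity and $\|[k]\|=1$ are immediate (when $p\mid|G|$, so that $[k]$ is a nonprojective indecomposable; the case $p\nmid|G|$ is trivial, as $\cc_n^G(M)=0$ for every $M$). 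Completing gives a commutative unital Banach algebra $A$.

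The Gelfand--Beurling spectral radius formula applied to $x=[M]$ then gives $\|x^n\|=\|[M^{\otimes n}]\|=\cc_n^G(M)$, so
\[
\rho(x)=\lim_{n\to\infty}\|x^n\|^{1/n}=\lim_{n\to\infty}\sqrt[n]{\cc_n^G(M)}=\npj_G(M),
\]
invoking part~(i) of Theorem~\ref{th:omni} for the last equality. It remains to identify the characters of $A$ with the core-bounded species. Every character $\chi\colon A\to\bC$ is automatically continuous of norm at most one, so $|\chi([M])|\le\|[M]\|=\dim\core_G(M)$, and thus $\chi|_{a(G)}$ is a core-bounded species. Conversely, a core-bounded species $s$ automatically vanishes on $\mathfrak{p}$ (since $\|[P]\|=0$ forces $s([P])=0$), is bounded by $\|\cdot\|$ on the dense subspace $(a(G)/\mathfrak{p})\otimes\bC$, and so extends uniquely by continuity to a bounded $\bC$-linear map on $A$; multiplicativity is preserved by continuity. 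This gives a bijection between core-bounded species of $a(G)$ and characters of $A$.

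Finally, Gelfand's theorem for commutative unital Banach algebras states that the character space of $A$ is nonempty and compact, and that $\rho(x)=\max_\chi|\chi(x)|$ with the maximum attained. Combined with the previous two steps applied to $x=[M]$, this yields both assertions of the theorem. I expect the main obstacle to lie in the Banach-algebra setup of the first paragraph: choosing a norm that is simultaneously submultiplicative, cleanly collapses the ideal of projectives to zero, and makes the bijection with core-bounded species work without loss. Once this framework is in place, the rest of the argument is essentially a direct translation of the Gelfand--Beurling theorem.
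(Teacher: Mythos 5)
Your construction of the Banach algebra, the identification of its characters with the core-bounded species, and the use of the Gelfand spectral radius formula all match the paper's route (its $\Ahat(G)$, Proposition~\ref{pr:core-bounded}, Theorem~\ref{th:spec-radius} and Corollary~\ref{co:radius}), and that part of your argument correctly yields the first assertion, that $\npj_G(M)=\sup_s|s([M])|$.

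There is, however, a genuine gap in your final step. Compactness of the character space gives you a core-bounded species $s$ with $|s([M])|=\rho([M])=\npj_G(M)$, but the theorem asserts the stronger statement that $s([M])$ \emph{equals} the nonnegative real number $\npj_G(M)$, i.e.\ that $\npj_G(M)$ itself lies in $\sigma([M])$ rather than merely being the modulus of some spectral value. Abstract Gelfand theory cannot deliver this: for a general element $a$ of a commutative Banach algebra the point $\rho(a)$ need not belong to $\sigma(a)$ (consider $a=i\one$). The paper closes this gap with a positivity input: Theorem~\ref{th:k-plus-M} gives $\npj_G(k\oplus M)=1+\npj_G(M)$, hence $\rho(\one+[M])=1+\rho([M])$, and Lemma~\ref{le:1+a} (a short geometric argument: the disc of radius $r$ centred at $1$ meets the circle of radius $1+r$ only at the real point $1+r$) then forces the real number $\npj_G(M)$ into $\sigma([M])$; this is Theorem~\ref{th:SpecM}. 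This is exactly the Perron--Frobenius-type fact, reflecting that $[M]$ is a positive element, that your appeal to ``the maximum is attained'' silently assumes. To repair your proof you would need to add Theorem~\ref{th:k-plus-M} (which itself rests on the binomial identity $\cc_n^G(k\oplus M)=\sum_i\binom{n}{i}\cc_i^G(M)$ and Proposition~\ref{pr:binom2}) together with the argument of Lemma~\ref{le:1+a}.
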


We formulate some conjectures about the behaviour of the invariant
$\npj_G(M)$, two of which we restate here:

\begin{conj}[Conjecture~\ref{conj:EndM}]
We have $\npj_G(M\otimes M^*) = \npj_G(M)^2$.
\end{conj}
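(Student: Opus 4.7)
The upper bound $\npj_G(M \otimes M^*) \le \npj_G(M)^2$ is immediate from Theorem~\ref{th:omni}\,(xiii) combined with (viii), so the entire content of the conjecture lies in the reverse inequality. My plan is to attack it through the spectral description $\npj_G(N) = \sup_s |s([N])|$ supplied by the core-bounded species theorem of Section~\ref{se:Banach}, together with the ring involution $[N]\mapsto [N^*]$ on $a(G)$.

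First, I would fix a core-bounded species $s_0\colon a(G)\to\bC$ with $s_0([M]) = \npj_G(M)$, which exists by that theorem. Since duality is a ring involution of $a(G)$ preserving $\dim\core_G$, the composition $\sigma(s_0)\colon [N]\mapsto s_0([N^*])$ is again a core-bounded species, and setting $\alpha := s_0([M^*]) = \sigma(s_0)([M])$ the multiplicativity of $s_0$ yields
\[ \npj_G(M\otimes M^*) \;\ge\; \lvert s_0([M\otimes M^*])\rvert \;=\; \lvert s_0([M])\rvert\cdot\lvert s_0([M^*])\rvert \;=\; \npj_G(M)\cdot\lvert\alpha\rvert. \]
The inequality $\lvert\alpha\rvert \le \npj_G(M^*) = \npj_G(M)$ only reproduces the trivial bound, so the conjecture is equivalent to being able to choose $s_0$ with $\lvert\alpha\rvert = \npj_G(M)$, i.e.\ so that a single species simultaneously attains the maximum on $[M]$ and on $[M^*]$.

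The cleanest way to arrange such a simultaneous maximiser would be to upgrade the Banach algebra completion $A(G)$ to a $C^*$-algebra with involution $[N]^* := [N^*]$ and norm $\npj_G$; the $C^*$-identity $\|x^*x\| = \|x\|^2$ applied to $x = [M]$ would immediately give $\npj_G(M\otimes M^*) = \npj_G(M)^2$ with no further work. A more concrete Perron--Frobenius-flavoured version of the same idea is to view the multiplication operator $L_{[M]}$ on $a(G)$ as a non-negative integer matrix in the basis of indecomposables, verify that $L_{[M^*]}$ is its transpose with respect to the natural Hom-pairing, and invoke $\rho(AA^\top) = \|A\|_{\mathrm{op}}^2 \ge \rho(A)^2$ for non-negative $A$, where $\rho$ denotes spectral radius.

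The main obstacle will be exactly this $C^*$-identity (equivalently, the exact transpose relation for $L_{[M^*]}$): \emph{a priori} there is no reason for $\npj_G$ to satisfy $\|a^*a\|=\|a\|^2$, since it is defined as a tensor-power growth rate rather than being induced by a Hermitian pairing, and the Hom-pairing on $a(G)$ is distorted in modular characteristic by the endomorphism rings of indecomposables and by projective contributions, so the relation $L_{[M^*]}=L_{[M]}^\top$ is unlikely to hold on the nose. If that approach stalls, a fallback would be to argue module-theoretically that $\dim\core_G(N\otimes N^*) \ge (\dim\core_G(N))^2/q(n)$ for $N = M^{\otimes n}$ and some polynomial $q$; the splitting $k \hookrightarrow \End_k(N) \twoheadrightarrow k$ through identity and trace handles the case $p\nmid\dim N$ cleanly, but controlling the projective content of $N\otimes N^*$ uniformly in $n$ when $p\mid\dim N$ looks genuinely hard, and is presumably why the statement remains a conjecture.
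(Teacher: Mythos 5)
You cannot be credited with a proof here, because there is nothing to prove against: this statement is Conjecture~\ref{conj:EndM}, which the authors explicitly state they could not prove, and your proposal does not close it either --- indeed your own final paragraph concedes that the decisive step is missing. What you do correctly: the upper bound $\npj_G(M\otimes M^*)\le\npj_G(M)^2$ via Theorem~\ref{th:omni}\,(xiii) and (viii), and the reduction, via Theorem~\ref{th:species} and the multiplicativity of species, of the conjecture to the existence of a single core-bounded species $s$ with $|s([M])|=|s([M^*])|=\npj_G(M)$. But nothing in the proposal produces such a simultaneous maximiser. Your ``cleanest way'' --- endowing $\Ahat(G)$ with an involution $[N]^*=[N^*]$ for which every character is hermitian (a $C^*$-type or symmetric-algebra property) --- is precisely the open point the authors flag in Remark~\ref{rk:Banach}: if $\Ahat(G)$ were a symmetric Banach algebra, every core-bounded species would satisfy $s([N^*])=\overline{s([N])}$ and the conjecture would follow; whether $\Ahat(G)$ is symmetric is exactly what is not known. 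So your reduction restates the difficulty rather than resolving it.

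The two fallbacks also fail to close the gap. The Perron--Frobenius version needs $L_{[M^*]}$ to be the transpose of $L_{[M]}$ in the basis of indecomposables, i.e.\ that the multiplicity of an indecomposable $W$ as a direct summand of $M\otimes V$ equals that of $V$ in $M^*\otimes W$; adjunction only gives $\dim\mathrm{Hom}_{kG}(M\otimes V,W)=\dim\mathrm{Hom}_{kG}(V,M^*\otimes W)$, and in the modular (non-semisimple) setting Krull--Schmidt multiplicities are not recovered from these Hom dimensions without corrections coming from endomorphism rings and almost split sequences, so the transpose relation is not available --- and without it the inequality $\rho(AA^{\top})\ge\rho(A)^2$ says nothing about $\npj_G(M\otimes M^*)$; there are additionally unaddressed issues in passing from the infinite ``matrix'' on $\Ahat(G)$ to finite spectral-radius arguments. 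The last fallback, a bound of the form $\cc_1^G(N\otimes N^*)\ge(\cc_1^G(N))^2/q(n)$ for $N=M^{\otimes n}$, is only sketched in the easy case $p\nmid\dim N$ and, as you say, the case $p\mid\dim N$ is genuinely hard; that is exactly where the conjecture lives. In short, the proposal is a reasonable survey of strategies, equivalent in substance to the authors' own Remark~\ref{rk:Banach}, but it contains no argument that establishes the lower bound $\npj_G(M\otimes M^*)\ge\npj_G(M)^2$.
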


\begin{conj}[Conjecture~\ref{conj:er}]
For all large enough values of $n$, the function $\cc_n^G(M)$ satisfies a
homogeneous linear recurrence relation with constant coefficients.
\end{conj}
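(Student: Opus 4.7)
The plan is to reformulate the statement as the rationality of the generating function $f_M(t)=\sum_{n\ge 0}\cc_n^G(M)t^n$, since eventual satisfaction of a homogeneous linear recurrence with constant coefficients is equivalent to the generating function being a rational function. A key structural observation I would exploit is that the map $L\colon a(G)\to\bZ$ given by $L([N])=\dim\core_G(N)$ is a $\bZ$-linear functional (by Krull--Schmidt, though not a ring homomorphism), so that $\cc_n^G(M)=L([M]^n)$. Assuming $L$ extends to a continuous functional on a Banach algebra completion $\hat a$ of $a(G)$ of the kind constructed in Section~\ref{se:Banach} (which should follow since $|L([N])|=\dim\core_G(N)$), one obtains $f_M(t)=L\bigl((1-t[M])^{-1}\bigr)$ for $|t|<1/\npj_G(M)$ by Neumann series, so rationality of $f_M$ amounts to extending this resolvent-after-$L$ meromorphically to all of $\bC$ with only finitely many poles.

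I would first dispose of the algebraic case as a warm-up. If $M$ is algebraic, only finitely many isomorphism classes of indecomposable summands appear in the tensor powers $M^{\otimes n}$, and they span a finitely generated free abelian subgroup $V\subset a(G)$ stable under multiplication by $[M]$. Cayley--Hamilton applied to this endomorphism of $V$ yields a monic polynomial identity $\sum_{i=0}^{d}a_i[M]^i=0$ in $V$, and applying $L$ gives the recurrence $\sum_{i=0}^{d}a_i\,\cc_{n+i}^G(M)=0$ for all $n\ge 0$. So in the algebraic case the conjecture is essentially automatic.

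The main obstacle, and the real content of the conjecture, is the non-algebraic case, where the subring of $a(G)$ generated by $[M]$ is not finitely generated as an abelian group and Cayley--Hamilton is unavailable. Here my plan is to work in the completion $\hat a$ and try to produce a spectral decomposition for the element $[M]$. Two sufficient ingredients would be (a) a finite set of isolated spectral values of $[M]$ seen by $L$, and (b) enough spectral-projection structure in $\hat a$ to write $[M]^n=\sum_{\lambda}p_\lambda(n)\lambda^n e_\lambda+r_n$ with $L(r_n)=0$ for all large $n$, from which $f_M(t)$ is manifestly rational. The hard part will be this spectral control: in a general commutative Banach algebra, the resolvent can develop essential singularities from accumulation of spectrum or from non-nilpotent quasinilpotent components, so one needs a finer structural result about $\hat a$, presumably via a detailed analysis of which core-bounded species of $a(G)$ actually contribute to the asymptotics of $\cc_n^G(M)$. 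A reasonable opening move would be to verify the conjecture on the simplest non-algebraic examples, such as indecomposable modules for $(\bZ/2)^2$ in characteristic~$2$, and to look for a Mackey-style reduction expressing $f_M(t)$ in terms of the generating functions of the restrictions $M\!\downarrow_E$ to elementary abelian subgroups $E$, paralleling part~(xvii) of Theorem~\ref{th:omni} at the level of generating functions rather than merely their asymptotic growth rate.
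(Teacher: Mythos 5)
The statement you are asked to prove is Conjecture~\ref{conj:er} of the paper: the authors do not prove it, and state explicitly that it is based only on extensive \textsf{Magma} computations. So there is no ``paper proof'' to match, and your proposal, which you yourself frame as a plan rather than an argument, does not close the gap. Your warm-up is fine and agrees with the paper's own Remark~\ref{rks:recur}\,(i): if $M$ is algebraic, the span $V$ of the finitely many indecomposable summands of the tensor powers is a finitely generated $[M]$-stable subgroup of $a(G)$, Cayley--Hamilton gives $\sum_i a_i[M]^{n+i}=0$, and applying the linear functional $L=\dim\core_G(-)$ yields the recurrence. That much is essentially automatic.

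The genuine gap is your step (b) in the non-algebraic case. In the completion $\Ahat(G)$ there is no reason for $\sigma([M])$ to be finite, or even for $\npj_G(M)$ to be an isolated point of the spectrum; for a general element of an infinite-dimensional commutative Banach algebra the spectrum can be an infinite compact set with accumulation points, and then $L\bigl((1-t[M])^{-1}\bigr)$ need not extend meromorphically --- the circle $|t|=1/\npj_G(M)$ could be a natural boundary for $f_M(t)$, in which case no linear recurrence exists. The Riesz functional calculus only produces idempotents $e_\lambda$ for isolated spectral components, so your decomposition $[M]^n=\sum_\lambda p_\lambda(n)\lambda^n e_\lambda + r_n$ presupposes exactly the finiteness you would need to prove; moreover the paper's open Question in Section~\ref{se:Banach} about quasi-nilpotent elements of $\Ahat(G)$ shows that even the ``remainder'' $r_n$ is not understood. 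The paper's actual partial progress in this direction is different from and more concrete than your spectral plan: for \emph{Omega-algebraic} modules (Section~\ref{se:Omega}) tensoring with $M$ is encoded by a finite matrix $X(\Omega)$ over Laurent polynomials in $\Omega$, which reduces $\npj_G(M)$ to a finite-dimensional eigenvalue computation --- yet even there the eventual recursiveness of $\cc_n^G(M)$ is only conjectured (and Example~\ref{eg:non-Omega-alg} suggests not all faithful modules are Omega-algebraic). So your proposal correctly identifies where the difficulty lies but supplies no mechanism to overcome it; the statement remains open.
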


The latter conjecture implies that the value of $\npj_G(M)$ is always
an algebraic integer. We do not know whether this is the case,
but we at least show in Proposition~\ref{prop:countable} that $\npj_G(M)$,
for all primes, fields, finite groups and finitely generated modules,
can only take countably many values.\medskip

\noindent
{\bf Acknowledgment.}
This material is partly based on work of the first author supported
by the National Science Foundation under Grant No.\ DMS-1440140 while
he was in residence at the Mathematical Sciences Research Institute
in Berkeley, California, during the Spring 2018 semester, and of the
second author supported by an International Academic
Fellowship from the Leverhulme Trust.
We wish to thank Burt Totaro for his helpful comments and for pointing out Proposition~\ref{prop:countable}.

\section{The Invariant $\npj_G(M)$}

We begin with some properties of tensor products.

\begin{prop}\label{pr:AC}
Let $M$ be a $kG$-module.
\begin{enumerate}
	\item
	If the dimension of $M$ is not divisible by $p$ then $M \otimes M^*$ has a direct summand isomorphic to $k$.
\item
$M$ is isomorphic to a direct summand of
$M\otimes M^*\otimes M$.
\item
If the dimension of
$M$ is divisible by
$p$ then $M \otimes M^* \otimes M$
has a direct summand isomorphic to $M \oplus M$.
\end{enumerate}
\end{prop}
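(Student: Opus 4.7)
The plan is to use the evaluation and coevaluation morphisms from the rigid monoidal structure on finite-dimensional $kG$-modules, and then exploit the snake (zig-zag) relations.

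For part~(i), I would introduce the $kG$-module maps $\mathrm{ev}\colon M\otimes M^*\to k$, $m\otimes f\mapsto f(m)$, and $\mathrm{coev}\colon k\to M\otimes M^*$, $1\mapsto \sum_i e_i\otimes e_i^*$, where $\{e_i\}$ is a $k$-basis of $M$ with dual basis $\{e_i^*\}$ (the choice is irrelevant, and $G$-equivariance is standard). A direct computation gives $\mathrm{ev}\circ\mathrm{coev}=\dim(M)\cdot\id_k$. When $p\nmid\dim M$, this scalar is a unit, so $\mathrm{coev}$ is a split monomorphism and $k$ appears as a direct summand of $M\otimes M^*$.

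For part~(ii), I would invoke the snake relation in the rigid category: the composite
\[
M\;=\;k\otimes M\xrightarrow{\;\mathrm{coev}\otimes\id_M\;}M\otimes M^*\otimes M\xrightarrow{\;\id_M\otimes\mathrm{ev}'\;}M\otimes k\;=\;M,
\]
where $\mathrm{ev}'\colon M^*\otimes M\to k$ is $f\otimes m\mapsto f(m)$, sends $m\mapsto\sum_i e_i\otimes e_i^*\otimes m\mapsto\sum_i e_i\,e_i^*(m)=m$. So the composite equals $\id_M$, and $M$ is a direct summand of $M\otimes M^*\otimes M$.

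For part~(iii), the idea is that when $p\mid\dim M$ there are two genuinely different such splittings, and they combine to give $M\oplus M$. Alongside the pair $(\phi_1,\psi_1)=(\mathrm{coev}\otimes\id_M,\;\id_M\otimes\mathrm{ev}')$ from part~(ii), I would consider
\[
\phi_2\colon m\mapsto\sum_i m\otimes e_i^*\otimes e_i,\qquad \psi_2\colon a\otimes f\otimes b\mapsto f(a)\,b,
\]
which are again $kG$-homomorphisms and satisfy $\psi_2\phi_2=\id_M$ by the other snake relation. The key computation is the two cross compositions
\[
\psi_1\phi_2(m)=\sum_i m\,e_i^*(e_i)=(\dim M)\,m,\qquad \psi_2\phi_1(m)=\sum_i e_i^*(e_i)\,m=(\dim M)\,m,
\]
which both vanish when $p\mid\dim M$. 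Therefore the $2\times 2$ matrix of compositions $(\psi_i\phi_j)$ equals the identity on $M\oplus M$, so $(\phi_1,\phi_2)\colon M\oplus M\to M\otimes M^*\otimes M$ is a split monomorphism and $M\oplus M$ is a direct summand. The main conceptual point (and the only step that is not purely formal in a rigid tensor category) is the observation that the cross terms are precisely $\dim(M)\cdot\id_M$, which is where the characteristic hypothesis enters; the rest is bookkeeping with ev and coev.
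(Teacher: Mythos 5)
Your proof is correct and is essentially the paper's own argument: your $\mathrm{ev}$/$\mathrm{coev}$ maps are exactly the maps $1\mapsto\sum_i m_i\otimes f_i$, $m\otimes f\mapsto f(m)$ used there, and in part (iii) your $(\phi_1,\phi_2)$ and $(\psi_1,\psi_2)$ reproduce the paper's split injection $(m,m')\mapsto\sum_i(m\otimes f_i\otimes m_i+m_i\otimes f_i\otimes m')$ and its retraction, with the same observation that the cross terms are $\dim(M)\cdot\id_M$ and vanish when $p\mid\dim M$. The only difference is cosmetic: you phrase the bookkeeping via the zig-zag identities of the rigid tensor structure rather than by direct element computation.
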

\begin{proof}
Let $m_i$ be a basis for $M$ and $f_i$ the dual basis of $M^*$.
Thus $\sum_i f_i(m_i)=\dim(M)$, and
for $m\in M$ we have $m=\sum_i f_i(m)m_i$.

For (i) we have maps $k \to M \otimes M^*$ given
by $1 \mapsto \sum_i m_i \otimes f_i$ and
$M \otimes M^*  \to M$ given by $m \otimes f 
\mapsto f(m)$, with composite multiplication by $\dim (M)$.

For (ii) we have maps $M \to M \otimes M^* \otimes M$ given
by $m \mapsto \sum_i m \otimes f_i \otimes m_i$ and
$M \otimes M^* \otimes M \to M$ given by $m \otimes f \otimes m'
\mapsto f(m)m'$, with composite the identity on $M$.

For (iii)
(cf.~Proposition 4.9 in Auslander and Carlson
\cite{Auslander/Carlson:1986a}, where this is proved
with the further hypothesis that $M$ is indecomposable, but
this hypothesis is not used in the proof), we have maps $M \oplus M \to M \otimes M^* \otimes M$
given by
\[ (m,m') \mapsto \sum_i(m \otimes f_i \otimes m_i +
m_i \otimes f_i \otimes m') \]
and $M \otimes M^* \otimes M
\to M\oplus M$ given by $m \otimes f \otimes m' \mapsto (f(m)m',f(m')m)$.
If $M$ has dimension divisible by $p$ then the composite is
the identity on $M \oplus M$.
\end{proof}

\begin{lemma}\label{le:MM*}
If $M$ is a $kG$-module, then $M \otimes M^*$ is projective if and only if $M$ is projective.
\end{lemma}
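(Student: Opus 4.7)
The plan is to deduce both implications almost immediately from Proposition~\ref{pr:AC}(ii), together with the standard fact that if $P$ is a projective $kG$-module and $N$ is any $kG$-module then $P\otimes N$ is projective (this holds because $kG$ is symmetric/self-injective, or because projectives are summands of free modules and $kG\otimes N$ is a direct sum of copies of $kG$ after forgetting the diagonal action and reindexing).

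For the forward direction, if $M$ is projective, then by the fact just recalled $M\otimes M^*$ is projective. For the reverse direction, suppose $M\otimes M^*$ is projective. Tensoring with $M$, the module $M\otimes M^*\otimes M$ is again projective by the same fact. But Proposition~\ref{pr:AC}(ii) exhibits $M$ as a direct summand of $M\otimes M^*\otimes M$, and a direct summand of a projective module is projective, so $M$ is projective.

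There is essentially no obstacle here: both directions reduce to the preservation of projectivity under tensor product with an arbitrary module, applied to the summand statement we have already proved. I would present the argument in two short sentences, with a pointer to Proposition~\ref{pr:AC}(ii) as the only nontrivial input.
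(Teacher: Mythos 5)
Your proposal is correct and is essentially the paper's own argument: both directions rest on the fact that tensoring with a projective module yields a projective module, and the reverse implication uses Proposition~\ref{pr:AC}\,(ii) to realise $M$ as a direct summand of $M\otimes M^*\otimes M$. No changes needed.
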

\begin{proof}
The tensor product of any module with a projective module is
projective. So by Proposition~\ref{pr:AC}, $M$ projective
implies $M\otimes M^*$ projective implies $M\otimes M^* \otimes M$
projective implies $M$ projective.
\end{proof}

\begin{lemma}\label{le:powerproj}
For any $kG$-module $M$, if $M^{\otimes n}$ is projective for some $n \ge 1$, then so is $M$.
\end{lemma}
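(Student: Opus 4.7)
The plan is to iterate Proposition~\ref{pr:AC}(ii) to realise $M$ as a direct summand of a suitable tensor expression. Specifically, I would prove by induction on $n \ge 1$ the statement
\[ M \mid M^{\otimes n} \otimes (M^*)^{\otimes (n-1)}. \]
The case $n=1$ is trivial. For the inductive step, Proposition~\ref{pr:AC}(ii) says $M \mid M \otimes M^* \otimes M$; tensoring this with $M^{\otimes(n-1)} \otimes (M^*)^{\otimes(n-1)}$ and reordering the factors (permitted because the tensor product with diagonal $G$-action is symmetric up to $kG$-isomorphism) gives
\[ M^{\otimes n} \otimes (M^*)^{\otimes(n-1)} \mid M^{\otimes(n+1)} \otimes (M^*)^{\otimes n}. \]
Combining this with the inductive hypothesis yields the claim for $n+1$.

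Once the claim is established, the lemma is immediate: if $M^{\otimes n}$ is projective for some $n\ge 1$, then $M^{\otimes n} \otimes (M^*)^{\otimes(n-1)}$ is projective since tensoring a projective module with any module gives a projective, and then $M$, as a direct summand of this module, is projective as well.

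I do not foresee any real obstacle. The only thing to keep an eye on is that we are freely permuting tensor factors throughout, which is fine under the diagonal action. If one preferred to avoid the induction, one could instead argue via Lemma~\ref{le:MM*}: the same idea shows $M \otimes M^* \mid (M \otimes M^*)^{\otimes n} \cong M^{\otimes n} \otimes (M^*)^{\otimes n}$, so projectivity of $M^{\otimes n}$ forces $M \otimes M^*$ to be projective, and Lemma~\ref{le:MM*} then yields that $M$ itself is projective.
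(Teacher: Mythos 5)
Your main argument is correct and is essentially the paper's proof: the paper runs the induction downwards, using Proposition~\ref{pr:AC}\,(ii) to get $M^{\otimes(n-1)}\mid M^{\otimes n}\otimes M^*$ and hence that $M^{\otimes n}$ projective forces $M^{\otimes(n-1)}$ projective, and your claim $M\mid M^{\otimes n}\otimes (M^*)^{\otimes(n-1)}$ is just the composite of those steps. Only your optional aside needs care: iterating Proposition~\ref{pr:AC}\,(ii) for the self-dual module $N=M\otimes M^*$ gives $N\mid N^{\otimes(2k+1)}$, i.e.\ only odd tensor powers, so to conclude from projectivity of $N^{\otimes n}$ you should pass to $N^{\otimes(2n-1)}$ (still projective) rather than assert $N\mid N^{\otimes n}$ for every $n$.
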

\begin{proof}
It follows from Proposition~\ref{pr:AC} that $M^{\otimes (n-1)}$ is
isomorphic to a direct summand of $M^{\otimes n} \otimes M^*$.
So $M^{\otimes n}$ is projective if and only if $M^{\otimes (n-1)}$ is
projective. The result now follows by induction on $n$.
\end{proof}

For a $kG$-module $M$, recall that we write $\cc_n^G(M)$ for
the dimension of $\core_G(M^{\otimes n})$.

\begin{defn}\label{def:npj}
We define
\[ \npj_G(M) = \limsup_{n\to\infty}\sqrt[n]{\cc_n^G(M)}. \]
\end{defn}

\begin{rks}
\begin{enumerate}
\item
The invariant $\npj_G(M)$ is robust, in that the dimension of
$\core_G(M)$ may be replaced by the number of composition
factors of $\core_G(M)$ or the number of composition factors
of the socle of $\core_G(M)$, and so on.
\item
An interesting invariant is $\npj_G(M)/\dim M$, which we
think of as the ``non-projective proportion of $M$ in the limit.''
In the example of the introduction, we have $\npj_G(M)=\tau$.
Thus $\npj_G(M)/\dim M\approx 0.809$, and so we
think of $M$ as ``about $19.1\%$ projective in the limit.''
\item
We shall see in Section~\ref{se:submult}, using the theory
of submultiplicative functions, that in fact
$\displaystyle\lim_{n\to\infty}\sqrt[n]{\cc_n^G(M)}$ exists and is equal to
$\displaystyle\inf_{n\ge 1}\sqrt[n]{\cc_n^G(M)}$.
\end{enumerate}
\end{rks}

We begin with some obvious properties of the invariant $\npj_G(M)$.

\begin{lemma}\label{le:dimM}
For any $kG$-module we have:
\begin{enumerate}
\item
$0 \le \npj_G(M) \le \dim M$,
\item
$\npj_G(M) = 0$ if and only if $M$ is projective,
\item
If $M$ is not projective then $\npj_G(M) \ge 1$.
\item 
If $M$ is a one dimensional module and $|G|$ is divisible
by $p$ then $\npj_G(M)=1$.
\end{enumerate}
\end{lemma}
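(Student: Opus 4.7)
The plan is to prove the four parts in sequence, leveraging the two preceding lemmas (\ref{le:MM*} and \ref{le:powerproj}) and the submultiplicativity of dimension under tensor product.

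For part (i), I would observe that $\core_G(M^{\otimes n})$ is a direct summand of $M^{\otimes n}$, so $\cc_n^G(M)\le \dim(M^{\otimes n})=(\dim M)^n$. Taking $n$th roots gives $\sqrt[n]{\cc_n^G(M)}\le \dim M$ for every $n\ge 1$, and passing to $\limsup$ yields $\npj_G(M)\le\dim M$. The lower bound $0\le\npj_G(M)$ is immediate from non-negativity of dimensions.

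For parts (ii) and (iii), the two directions will both run through Lemma~\ref{le:powerproj}. If $M$ is projective then every $M^{\otimes n}$ is projective, so $\cc_n^G(M)=0$ for all $n\ge 1$ and $\npj_G(M)=0$. Conversely, if $M$ is not projective then Lemma~\ref{le:powerproj} tells us $M^{\otimes n}$ is not projective for any $n\ge 1$, so $\core_G(M^{\otimes n})\ne 0$ and $\cc_n^G(M)\ge 1$. This gives $\sqrt[n]{\cc_n^G(M)}\ge 1$ for every $n$, hence $\npj_G(M)\ge 1$, proving (iii). Part (ii) then follows, since $\npj_G(M)=0$ forces $M$ projective by the contrapositive of (iii).

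For part (iv), first note that a one-dimensional $kG$-module cannot be projective when $p\mid|G|$: every projective $kG$-module has dimension divisible by $|G|_p$, which is at least $p>1$. So $M$ is non-projective, and by (iii) we have $\npj_G(M)\ge 1$. On the other hand, $M^{\otimes n}$ is again one-dimensional, so $\cc_n^G(M)\le\dim(M^{\otimes n})=1$ for every $n$. Combining the two inequalities gives $\cc_n^G(M)=1$ for all $n\ge 1$, and hence $\npj_G(M)=1$.

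There is no real obstacle here; the whole argument is bookkeeping on top of Lemma~\ref{le:powerproj}. The only point that requires a moment's care is invoking that a one-dimensional module is automatically non-projective when $p$ divides $|G|$, which I would justify by the standard fact that projective $kG$-modules have dimension divisible by $|G|_p$ (or equivalently, by restricting to a Sylow $p$-subgroup $P$ and noting that $kP$-projectives have dimension a multiple of $|P|$).
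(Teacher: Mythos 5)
Your argument is correct and follows the paper's proof essentially verbatim: part (i) from $\cc_n^G(M)\le(\dim M)^n$, parts (ii) and (iii) from Lemma~\ref{le:powerproj} together with the integrality of $\cc_n^G(M)$, and part (iv) by combining non-projectivity of a one-dimensional module (when $p$ divides $|G|$) with the bounds from (i) and (iii). The only addition is your explicit justification that such a module cannot be projective, which the paper leaves implicit.
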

\begin{proof}
Part (i) is because
\[ \cc_n^G(M) = \dim \core_G(M^{\otimes n}) \le \dim M^{\otimes n} =
  (\dim M)^n. \]
If $M$ is projective then clearly $\npj_G(M)=0$. Conversely,
if $M$ is not projective then, by Lemma~\ref{le:powerproj},
no $\cc_n^G(M)$ is $0$. Since $\cc_n^G(M)$ is a non-negative integer we
have $\cc_n^G(M) \ge 1$, proving parts (ii) and (iii). Part (iv)
now follows, since $M$ is not projective in this case.
\end{proof}
		
\begin{lemma}\label{le:dual}
We have $\npj_G(M^*)=\npj_G(M)$.
\end{lemma}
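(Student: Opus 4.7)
The plan is to show the stronger statement that $\cc_n^G(M^*) = \cc_n^G(M)$ for every $n \ge 0$, from which equality of the two $\limsup$s is immediate.

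First I would observe that $k$-linear duality $(-)^*$ is an exact contravariant auto-equivalence on finitely generated $kG$-modules that is compatible with tensor products, i.e.\ $(M\otimes N)^* \cong M^* \otimes N^*$ as $kG$-modules. Iterating gives $(M^{\otimes n})^* \cong (M^*)^{\otimes n}$. Moreover, since $kG$ is a Frobenius (indeed, symmetric) algebra, a $kG$-module is projective if and only if it is injective, so the dual of a projective module is projective, and hence for any $kG$-module $N$ we have $\core_G(N^*) \cong \core_G(N)^*$ by applying duality to a decomposition $N = \core_G(N) \oplus \proj$ and invoking Krull--Schmidt.

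Combining these,
\[ \core_G((M^*)^{\otimes n}) \;\cong\; \core_G((M^{\otimes n})^*) \;\cong\; \core_G(M^{\otimes n})^*, \]
and taking $k$-dimensions gives $\cc_n^G(M^*) = \cc_n^G(M)$. Definition~\ref{def:npj} then yields $\npj_G(M^*)=\npj_G(M)$.

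The only subtlety is the use of the Frobenius property of $kG$ to ensure duality preserves projectivity, but this is entirely standard and there is no real obstacle here.
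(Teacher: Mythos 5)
Your proof is correct and follows the same route as the paper, which simply notes $\core_G(M^*)\cong\core_G(M)^*$ (hence $\cc_n^G(M^*)=\cc_n^G(M)$) and leaves the compatibility of duality with tensor powers and projectivity implicit. You have merely spelled out those standard details, so there is nothing to add.
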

\begin{proof}
We have $\core_G(M^*)\cong\core_G(M)^*$ and so $\cc_n^G(M^*)=\cc_n^G(M)$.
\end{proof}

Recall that we have the syzygy operator $\Omega$, where
$\Omega M$ is defined to be the kernel of a projective
cover $P \rightarrow M$. Similarly, $\Omega^{-1}M$ is
defined to be the cokernel of an injective hull $M \to I$.
Since projective $kG$-modules
are the same as injective modules,
we have $\Omega(\Omega^{-1}M)\cong \core_G(M)
\cong \Omega^{-1}(\Omega M)$.

For $n>0$, $\Omega^n M$
denotes $\Omega(\Omega^{n-1}M)$, $\Omega^{-n} M$ denotes
$\Omega^{-1}(\Omega^{-n+1}M)$, and $\Omega^0 M$ denotes $\core_G(M)$.
For $n\in\bZ$ we have
$\core_G(\Omega^n k \otimes M)\cong \Omega^n M $.

\begin{lemma}\label{le:Omega-k}
We have $\npj_G(\Omega k) = \npj_G(\Omega^{-1} k)= 1$, provided that $p$ divides $|G|$.
\end{lemma}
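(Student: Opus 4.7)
The plan is to handle the two quantities separately, establishing matching lower and upper bounds, and then to deduce the case of $\Omega^{-1}k$ from that of $\Omega k$ by duality.

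For the lower bound, note that since $p$ divides $|G|$ the trivial module $k$ is not projective, so $\Omega k$ (and $\Omega^{-1}k$) is non-projective. Lemma~\ref{le:dimM}\,(iii) then immediately gives $\npj_G(\Omega^{\pm 1}k)\ge 1$.

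For the upper bound on $\npj_G(\Omega k)$, I would first reduce $\cc_n^G(\Omega k)$ to a dimension of a syzygy of $k$. Using the identity $\core_G(\Omega k\otimes M)\cong \Omega(\core_G(M))$ (a special case of the fact $\core_G(\Omega^n k\otimes M)\cong \Omega^n M$ recalled just above the lemma, together with the observation that $\Omega$ only depends on the core of its argument), an easy induction on $n$ yields
\[ \core_G\bigl((\Omega k)^{\otimes n}\bigr)\cong \Omega^n k, \qquad \text{so} \qquad \cc_n^G(\Omega k)=\dim \Omega^n k. \]
Fixing a minimal projective resolution $\cdots\to P_1\to P_0\to k\to 0$, the short exact sequence $0\to \Omega^n k\to P_{n-1}\to \Omega^{n-1}k\to 0$ gives $\dim\Omega^n k\le \dim P_{n-1}$. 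The theory of complexity (Alperin--Evens \cite{Alperin/Evens:1981a}, Carlson \cite{Carlson:1981a}) tells us that $\dim P_n$ grows at most polynomially in $n$, so $\limsup \sqrt[n]{\dim \Omega^n k}\le 1$, which combined with the lower bound proves $\npj_G(\Omega k)=1$.

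Finally, for $\Omega^{-1}k$ I would invoke self-injectivity of $kG$: dualising a projective cover $P\twoheadrightarrow k$ produces an injective hull $k=k^*\hookrightarrow P^*$, so $(\Omega k)^*\cong \Omega^{-1}(k^*)=\Omega^{-1}k$. Lemma~\ref{le:dual} then yields $\npj_G(\Omega^{-1}k)=\npj_G(\Omega k)=1$. The only non-formal input is the polynomial-growth bound on a minimal resolution of $k$, and this is the step I would be most careful to cite explicitly; the rest is bookkeeping with cores, syzygies and duality.
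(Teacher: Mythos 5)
Your proposal is correct and follows essentially the same route as the paper: identify $\core_G((\Omega k)^{\otimes n})\cong\Omega^n k$, use the polynomial growth of the dimensions in a minimal resolution of $k$ to get the upper bound, and transfer to $\Omega^{-1}k$ via $(\Omega k)^*\cong\Omega^{-1}k$ and Lemma~\ref{le:dual}. The only cosmetic difference is the citation for polynomial growth (the paper points to \cite{Benson:1991b}, \S 5.3 rather than the complexity literature) and your extra care in recording the lower bound, which the paper leaves implicit.
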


\begin{proof}
We have $\core_G((\Omega k)^{\otimes n}) \cong \Omega^nk$,
and $\dim \Omega^n k$ grows polynomially in $n$ (see for example \cite{Benson:1991b}\,  \S 5.3).
Therefore $\npj_G(\Omega k)=1$. Since $(\Omega k)^* \cong \Omega^{-1} k$,
Lemma \ref{le:dual} shows that $\npj_G(\Omega^{-1} k)=1$.
\end{proof}

\begin{lemma}\label{le:core}
If $\core_G(M)\cong\core_G(N)$ then $\npj_G(M)=\npj_G(N)$.
\end{lemma}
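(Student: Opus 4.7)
The plan is to reduce everything to the fact that tensoring a projective module with anything yields a projective module, which means the ``projective tail'' in $M$ can be ignored when forming cores of tensor powers.

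Concretely, I would write $M \cong L \oplus P$ and $N \cong L \oplus Q$, where $L = \core_G(M) \cong \core_G(N)$ and $P, Q$ are projective. Expanding $M^{\otimes n} = (L \oplus P)^{\otimes n}$ by distributivity yields a direct sum of $2^n$ tensor products, exactly one of which is $L^{\otimes n}$; every other summand has at least one tensor factor equal to $P$, and hence is projective. Therefore
\[ M^{\otimes n} \cong L^{\otimes n} \oplus \proj, \]
and by Krull--Schmidt $\core_G(M^{\otimes n}) \cong \core_G(L^{\otimes n})$. The identical argument applied to $N$ gives $\core_G(N^{\otimes n}) \cong \core_G(L^{\otimes n})$.

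Taking dimensions yields $\cc_n^G(M) = \cc_n^G(N)$ for every $n \ge 1$, so the two generating functions $f_M(t)$ and $f_N(t)$ coincide term by term, and in particular $\npj_G(M) = \npj_G(N)$.

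There is essentially no obstacle here: the argument is a one-line application of the projective ideal property together with the definition of the core. The only thing to be careful about is to invoke Krull--Schmidt to conclude that the core is well defined after the decomposition, but this has already been noted in Definition~\ref{def:npj}.
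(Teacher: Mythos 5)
Your proof is correct and uses the same key fact as the paper — that tensoring with a projective module yields a projective module — to show $\core_G(M^{\otimes n}) \cong \core_G(\core_G(M)^{\otimes n})$; you merely spell out the binomial expansion of $(L\oplus P)^{\otimes n}$ where the paper states the isomorphism directly. No issues.
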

\begin{proof}
If $\core_G(M) \cong \core_G(N)$ then, since the
tensor product of a projective module with any module 
is projective, we have
\[ \core_G(M^{\otimes n})\cong\core_G(\core_G(M)^{\otimes n})
\cong \core_G(\core_G(N)^{\otimes n})\cong \core_G(N^{\otimes n}). \]
Thus $\cc_n^G(M)=\cc_n^G(N)$, and so $\npj_G(M)=\npj_G(N)$.
\end{proof}

\begin{lemma}\label{le:subgroup}
If $H$ is a subgroup of $G$ and $M$ is a $kG$-module then
$\npj_H(M)\le \npj_G(M)$.
\end{lemma}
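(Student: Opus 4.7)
The plan is to show directly at the level of sequences that $\cc_n^H(M)\le\cc_n^G(M)$ for every $n$, and then pass to $\limsup$s. The only nontrivial ingredient is the standard fact that $kG$ is free as a $kH$-module, so that the restriction $P{\downarrow}_H$ of any projective $kG$-module $P$ is a projective $kH$-module.

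Concretely, I would start by decomposing, as $kG$-modules,
\[ M^{\otimes n} \cong \core_G(M^{\otimes n}) \oplus P_n, \]
where $P_n$ is projective. Restricting to $H$ and using that restriction commutes with tensor products and direct sums gives
\[ (M{\downarrow}_H)^{\otimes n} \cong \core_G(M^{\otimes n}){\downarrow}_H \oplus P_n{\downarrow}_H, \]
with $P_n{\downarrow}_H$ projective over $kH$. Since the core is the complement of the maximal projective summand, and since any further projective summands can only come from $\core_G(M^{\otimes n}){\downarrow}_H$, the Krull--Schmidt theorem forces $\core_H((M{\downarrow}_H)^{\otimes n})$ to be (isomorphic to) a summand of $\core_G(M^{\otimes n}){\downarrow}_H$. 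Taking dimensions yields
\[ \cc_n^H(M) = \dim\core_H\bigl((M{\downarrow}_H)^{\otimes n}\bigr) \le \dim\core_G(M^{\otimes n}) = \cc_n^G(M). \]

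Taking $n$th roots and then $\limsup$ as $n\to\infty$ gives $\npj_H(M)\le\npj_G(M)$, which is the desired inequality. There is no real obstacle here beyond invoking that $kH\hookrightarrow kG$ is a Frobenius (in fact free) extension so that projectivity descends under restriction; everything else is bookkeeping with Krull--Schmidt and the definition of $\npj$.
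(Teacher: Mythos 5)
Your argument is correct and is exactly the paper's proof: the paper simply asserts $\cc_n^H(M)\le\cc_n^G(M)$ for all $n$ and takes $\limsup$, while you supply the standard justification (projectivity descends under restriction since $kG$ is free over $kH$, plus Krull--Schmidt). Nothing further is needed.
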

\begin{proof}
We have $\cc_n^H(M) \le \cc_n^G(M)$ for all $n$.
\end{proof}

\begin{lemma}\label{le:K}
If $K$ is an extension field of $k$ then $\npj_G(K\otimes_k M)=\npj_G(M)$.
\end{lemma}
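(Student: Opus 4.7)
The identity $(K \otimes_k M)^{\otimes n} \cong K \otimes_k M^{\otimes n}$ reduces the lemma to showing, for every $kG$-module $N$, that $\dim_K \core_G(K \otimes_k N) = \dim_k \core_G(N)$. Writing $N = \core_G(N) \oplus P$ with $P$ projective, base-changing, and using that $K \otimes_k P$ is a projective $KG$-module, reduces the claim further to the statement that \emph{if $N$ has no projective summand, then neither does $K \otimes_k N$}. This is the main obstacle of the proof: decompositions can behave delicately under arbitrary (in particular transcendental) extensions of scalars.

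For the case when $K/k$ is algebraic, any idempotent $e \in \End_{KG}(K \otimes_k N) = K \otimes_k \End_{kG}(N)$ corresponding to a projective summand involves only finitely many nonzero $K$-coefficients, and so lies in $K_0 \otimes_k \End_{kG}(N)$ for some finite subextension $K_0/k$. Setting $Q_0 = e(K_0 \otimes_k N)$, one sees that $K \otimes_{K_0} Q_0 \cong e(K \otimes_k N)$ is projective over $KG$, and faithful flatness of $K/K_0$ applied to $\mathrm{Ext}^1_{K_0 G}(Q_0,-)$ forces $Q_0$ to be projective over $K_0 G$. One may therefore assume $K/k$ finite of degree $d$. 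In that case $KG \cong (kG)^d$ as $kG$-modules, so every projective $KG$-module is also projective as a $kG$-module, and $K \otimes_k N \cong N^d$ as $kG$-modules. A projective $KG$-summand of $K \otimes_k N$ would then be a projective $kG$-summand of $N^d$, contradicting Krull--Schmidt together with the hypothesis that no indecomposable summand of $N$ is projective.

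For arbitrary $K/k$, choose algebraic closures $\bar k \subseteq \bar K$ compatible with $k \subseteq K$. The algebraic case applied to $\bar k/k$ and $\bar K/K$ gives $\npj_G(M) = \npj_G(\bar k \otimes_k M)$ and $\npj_G(K \otimes_k M) = \npj_G(\bar K \otimes_k M)$, and since $\bar K \otimes_k M \cong \bar K \otimes_{\bar k}(\bar k \otimes_k M)$, it remains to treat extensions $L/F$ with $F$ algebraically closed. In that setting every indecomposable $FG$-module $N_0$ has $\End_{FG}(N_0)$ local with residue field $F$ (every finite-dimensional division algebra over an algebraically closed field equals the field), so $\End_{LG}(L \otimes_F N_0) = L \otimes_F \End_{FG}(N_0)$ is again local with residue field $L$, whence $L \otimes_F N_0$ is indecomposable. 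The Krull--Schmidt decomposition is preserved termwise, and an indecomposable summand $L \otimes_F N_0$ is projective iff $N_0$ is (by faithful flatness of $L/F$ applied to $\mathrm{Ext}^1_{FG}$), so the projective parts correspond bijectively, completing the proof.
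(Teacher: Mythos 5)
Your proof is correct, but it takes a genuinely different route from the paper's. Both arguments reduce the lemma to the same key claim, namely that extension of scalars cannot create a projective summand. The paper then disposes of this in one stroke for arbitrary $K/k$: it restricts the putative projective $KG$-summand $P$ of $K\otimes_k N$ down to $kG$, notes that $P{\downarrow_{kG}}$ is a (possibly infinite) direct sum of finite dimensional indecomposable projective $kG$-modules, and observes that any one such summand $P'$, being finite dimensional, is a summand of a finite sub-sum of $K\otimes_k N{\downarrow_{kG}}\cong\bigoplus N$ and hence of $N$ by Krull--Schmidt. Your finite-degree case is essentially this restriction argument; the extra machinery you deploy --- descending the idempotent to a finite subextension when $K/k$ is algebraic, and passing through compatible algebraic closures together with the Noether--Deuring-type fact that $\End_{LG}(L\otimes_F N_0)=L\otimes_F\End_{FG}(N_0)$ remains local when $F$ is algebraically closed --- is there precisely to handle the infinite and transcendental cases that the paper absorbs by allowing infinite direct sums in the restriction. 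Your approach is longer and leans on flat base change for $\End$ and $\mathrm{Ext}^1$, but it proves strictly more along the way: over an algebraically closed base the entire Krull--Schmidt decomposition is preserved termwise, not just the dimension of the core.
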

\begin{proof}
This will follow immediately if we can show that
for any $kG$-module $N$, we have $K \otimes_k \core_G(N)
\cong \core_G(K\otimes_k N)$. For this,
we need to show that if $K \otimes_kN$ has a projective
summand $P$ then $N$ also has a projective summand.
Consider the restrictions of $K \otimes_kN$ and $P$
from $KG$ to $kG$. The restriction $P{\downarrow_{kG}}$
must be a sum of finite dimensional indecomposable
projective $kG$-modules; let $P'$ be one of them. It is a
summand of $K \otimes_k N{\downarrow_{kG}}$, which is
a sum of copies of $N$. Because it is finite dimensional,
$P'$ is a summand of a finite sum of copies of $N$, hence
is a summand of $N$, by the Krull--Schmidt Theorem.
\end{proof}

\begin{eg}\label{eg:3x3a}
Let $M$ be the $3$ dimensional faithful uniserial module for
$G=\bZ/3\times \bZ/3=\langle g,h\rangle$ over $\bF_3$ given by
\[ g \mapsto \begin{pmatrix}1&1&0 \\ 0&1&1 \\ 0&0&1 \end{pmatrix} \qquad
h \mapsto \begin{pmatrix}1&0&1 \\ 0&1&0 \\ 0&0&1 \end{pmatrix}. \]
Then $\Omega^2M \cong M$, $M$ is algebraic
(Craven~\cite{Craven:2007a}, Section 3.3.2),
and $\Omega M$ has dimension~$6$.
The modules $M'=M\otimes M^*$ and
$\Omega M'\cong \Omega M\otimes M^*\cong M \otimes \Omega(M^*)$
are indecomposable, of dimensions $9$ and $18$ respectively.
The indecomposable summands of tensor powers of $M$
are determined by the equations
\begin{align*}
M\otimes M &\cong M^*\oplus \Omega(M^*), &
M \otimes M' &\cong 2M \oplus 2\Omega M \oplus P, \\
M \otimes \Omega M &\cong M^* \oplus \Omega(M^*) \oplus P, &
M \otimes \Omega M' &\cong 2M \oplus 2\Omega M \oplus 4P,
\end{align*}
where $P$ is the $9$ dimensional projective module. These equations
imply that
\[ M^{\otimes 5} \cong 8M^{\otimes 2} \oplus 19P. \]
It follows that for $n\ge 5$ we have $\cc_n(M)=8\cc_{n-3}(M)$, and so
$\npj_G(M)=2$. Similarly we have $\npj_G(M')=4$.
\end{eg}

\section{Short Exact Sequences and Direct Sums}

\begin{lemma}\label{le:binom}
Let $a_n$, $b_n$ and $c_n$ be sequences of non-negative real numbers, satisfying
\[ c_n \le \sum_{i=0}^n \binom{n}{i} a_i b_{n-i}. \]
Then
\[ \limsup_{n\to\infty}\sqrt[n]{c_n}\le
\limsup_{n\to\infty}\sqrt[n]{a_n}+\limsup_{n\to\infty}\sqrt[n]{b_n}. \]
\end{lemma}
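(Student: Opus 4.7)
Set $\alpha=\limsup_n\sqrt[n]{a_n}$ and $\beta=\limsup_n\sqrt[n]{b_n}$. If either is $+\infty$ the conclusion is trivial, so assume both are finite. The plan is to show that for every choice of reals $\alpha'>\alpha$ and $\beta'>\beta$ one has $\limsup_n\sqrt[n]{c_n}\le \alpha'+\beta'$, and then let $\alpha'\downarrow\alpha$ and $\beta'\downarrow\beta$.

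The key observation is that the binomial weights in the hypothesis are \emph{exactly} what the binomial theorem would produce for the geometric sequences $\alpha'^n$ and $\beta'^n$. Concretely, from the definition of $\limsup$, for any such $\alpha'>\alpha$ there exists a constant $C_1\ge 0$ with $a_n\le C_1(\alpha')^n$ for all $n\ge 0$ (take $C_1$ large enough to cover the finitely many initial terms where $\sqrt[n]{a_n}$ might exceed $\alpha'$; here we use that $\alpha'>0$, which holds because $\alpha'>\alpha\ge 0$). Similarly pick $C_2\ge 0$ with $b_n\le C_2(\beta')^n$. Substituting into the hypothesis,
\[
c_n\le \sum_{i=0}^n\binom{n}{i} C_1(\alpha')^i\,C_2(\beta')^{n-i}=C_1C_2(\alpha'+\beta')^n.
\]

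Taking $n$th roots and letting $n\to\infty$, the factor $(C_1C_2)^{1/n}$ tends to $1$, so $\limsup_n\sqrt[n]{c_n}\le \alpha'+\beta'$. Since $\alpha'>\alpha$ and $\beta'>\beta$ were arbitrary, the result follows.

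There is no real obstacle; the whole argument hinges on recognising the binomial-convolution bound as the expansion of $(\alpha'+\beta')^n$. The only mild care needed is the standard one of absorbing finitely many exceptional terms into the constants $C_1,C_2$ when applying the $\limsup$ definition, and noting that the cases $\alpha=0$ or $\beta=0$ present no problem because we are free to choose $\alpha'$ and $\beta'$ strictly positive.
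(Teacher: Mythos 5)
Your proof is correct and is essentially the same argument as the paper's: bound $a_n\le A(\alpha+\ep)^n$ and $b_n\le B(\beta+\ep)^n$ for all $n$ (absorbing the finitely many exceptional terms into the constants), apply the binomial theorem to get $c_n\le C(\alpha+\beta+2\ep)^n$, and let the slack tend to zero. No issues.
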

\begin{proof}
The statement that $\displaystyle\limsup_{n\to\infty}\sqrt[n]{a_n}= \alpha$ implies
that for all $\ep>0$, there exists $m$ such that
for all $n\ge m$ we have $a_n \le (\alpha+\ep)^n$. Introducing
a positive constant $A$, we can assume that $a_n \le A(\alpha+\ep)^n$
for all $n\ge 0$.
Similarly, if
$\displaystyle\limsup_{n\to \infty}\sqrt[n]{b_n}=\beta$ then for all $\ep>0$ there
exists a positive constant $B$ such that for all $n\ge 0$ we have $b_n\le B(\beta+\ep)^n$.
Thus for all $\ep>0$ there is a positive constant $C=AB$ such that for all
 $n\ge 0$ we have
\begin{equation*}
c_n \le \sum_{i=0}^n \binom{n}{i}A(\alpha+\ep)^iB(\beta+\ep)^{n-i}
= C(\alpha+\beta+2\ep)^n,
\end{equation*}
and so $\displaystyle\limsup_{n\to\infty}\sqrt[n]c_n \le \alpha+\beta$.
\end{proof}

\begin{theorem}\label{th:seq-sum}
If $0 \to M_1 \to M_2 \to M_3 \to 0$ is a short exact sequence of
$kG$-modules then
\[ \npj_G(M_2) \le \npj_G(M_1) + \npj_G(M_3). \]
If the sequence splits then we also have
\[ \max\{\npj_G(M_1),\npj_G(M_3)\} \le \npj_G(M_2). \]
\end{theorem}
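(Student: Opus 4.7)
The plan is to reduce the first inequality to Lemma~\ref{le:binom} by means of a natural filtration on the tensor power $M_2^{\otimes n}$ whose successive quotients are built from tensor powers of $M_1$ and $M_3$. For this I will need a general subadditivity property of $\dim\core_G$ on short exact sequences.

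That property reads: for any short exact sequence $0\to A\to B\to C\to 0$ of $kG$-modules, $\dim\core_G(B)\le\dim\core_G(A)+\dim\core_G(C)$. To prove it, write $A=\core_G(A)\oplus P_A$ and $C=\core_G(C)\oplus P_C$. Because $kG$ is self-injective, $P_A$ is injective, so its inclusion into $B$ admits a retraction $r\colon B\to P_A$, and $B=P_A\oplus B_1$ with $B_1=\ker r$. A short calculation shows that $A\cap B_1\cong\core_G(A)$ via $\bar a\mapsto\bar a-r(\bar a)$, yielding a short exact sequence $0\to\core_G(A)\to B_1\to C\to 0$. Projectivity of $P_C$ then splits the surjection $B_1\twoheadrightarrow C\twoheadrightarrow P_C$, writing $B_1=P_C\oplus B_2$ with a short exact sequence $0\to\core_G(A)\to B_2\to\core_G(C)\to 0$. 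Hence $B\cong P_A\oplus P_C\oplus B_2$ and
\[ \dim\core_G(B)=\dim\core_G(B_2)\le\dim B_2=\dim\core_G(A)+\dim\core_G(C). \]

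Next I set up the filtration $F^0\supseteq F^1\supseteq\cdots\supseteq F^n\supseteq F^{n+1}=0$ of $M_2^{\otimes n}$, where $F^i=\sum_{|S|\ge i}M_1^{\otimes S}\otimes M_2^{\otimes\bar S}$, the sum taken inside $M_2^{\otimes n}$ over subsets $S\subseteq\{1,\ldots,n\}$. Choosing a vector space complement $V$ with $M_2=M_1\oplus V$ makes it immediate that
\[ F^i/F^{i+1}\cong\bigoplus_{|S|=i}M_1^{\otimes S}\otimes M_3^{\otimes\bar S}\cong\binom{n}{i}\bigl(M_1^{\otimes i}\otimes M_3^{\otimes(n-i)}\bigr). \]
Iterating the subadditivity established above along this filtration yields
\[ \cc_n^G(M_2)\le\sum_{i=0}^n\binom{n}{i}\dim\core_G\bigl(M_1^{\otimes i}\otimes M_3^{\otimes(n-i)}\bigr). \]
Since tensoring a projective module with anything is projective, $\dim\core_G(X\otimes Y)\le\dim\core_G(X)\cdot\dim\core_G(Y)$, so the right hand side is bounded by $\sum_i\binom{n}{i}\cc_i^G(M_1)\cc_{n-i}^G(M_3)$. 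Applying Lemma~\ref{le:binom} delivers $\npj_G(M_2)\le\npj_G(M_1)+\npj_G(M_3)$.

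For the split case, $M_2\cong M_1\oplus M_3$ makes each of $M_1^{\otimes n}$ and $M_3^{\otimes n}$ a direct summand of $M_2^{\otimes n}$, so $\core_G(M_i^{\otimes n})$ is a direct summand of $\core_G(M_2^{\otimes n})$ and $\cc_n^G(M_i)\le\cc_n^G(M_2)$, giving $\npj_G(M_i)\le\npj_G(M_2)$. The main obstacle is the subadditivity of $\dim\core_G$ on short exact sequences; once that is in hand the rest is bookkeeping with the filtration and the combinatorial lemma.
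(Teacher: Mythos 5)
Your proof is correct and follows essentially the same route as the paper: the binomial filtration of $M_2^{\otimes n}$, subadditivity of $\dim\core_G$ along the filtration, submultiplicativity under tensor product, and then Lemma~\ref{le:binom}. The only difference is that you carefully prove the subadditivity of $\dim\core_G$ on short exact sequences, which the paper dispatches with the single remark that projective summands of a filtered quotient split off because they are also injective.
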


\begin{proof}
The module $M_2^{\otimes n}$ has a filtration of length $2^n$ where the filtered
quotients are $\binom{n}{i}$ copies of $M_1^{\otimes i} \otimes
M_3^{\otimes(n-i)}$ ($0\le i\le n$). Projective summands of a
filtered quotient split off the entire module, since they are also
injective. So
\begin{align*}
\cc^G_n(M_2)=
\dim\core_G(M_2^{\otimes n}) &\le \sum_{i=0}^n\binom{n}{i}
\dim\core_G(M_1^{\otimes i} \otimes M_3^{\otimes (n-i)}) \\
&\le \sum_{i=0}^n\binom{n}{i}\cc^G_i(M_1)\cc^G_{n-i}(M_3)
\end{align*}
Applying Lemma \ref{le:binom}, we deduce that
\[ \npj_G(M_2) \le \npj_G(M_1) + \npj_G(M_3). \]
If the sequence splits, then each $\cc^G_n(M_2)$ is at least
as big as $\cc^G_n(M_1)$ and also at least as big as $\cc^G_n(M_3)$.
\end{proof}

\begin{eg}\label{eg:SL24}
Let $M$ be the two dimensional natural module for $SL(2,\bF_4)$ and let $N$
be its Frobenius twist. Then
 $M \otimes N$ is the four dimensional Steinberg
module $\St$, which is projective. Furthermore, $M^{\otimes 3} \cong M \oplus M \oplus \St$
and $N^{\otimes 3} \cong N \oplus N \oplus \St$.
Since $M\otimes N$ is projective, for all $n\ge 1$ we have
\[ \core((M\oplus N)^{\otimes n})\cong\core(M^{\otimes n}) \oplus \core(N^{\otimes n}), \]
and so
\[ \npj_G(M)=\npj_G(N)=\npj_G(M\oplus N) = \sqrt{2}. \]
This shows that the first inequality in the theorem is not always
an equality, even for direct sums.

We shall make further use of this example in Remark \ref{rk:sqrt2}.
\end{eg}

On the other hand, for sums of isomorphic modules, we have the following.

\begin{theorem}\label{th:m.M}
If $N$ is isomorphic to a direct sum of $m$ copies of a $kG$-module $M$ then
$\npj_G(N)=m \npj_G(M)$.
\end{theorem}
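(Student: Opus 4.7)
The plan is a direct computation rather than an appeal to subadditivity. The key observation is that direct sums distribute over tensor products, so expanding $N^{\otimes n}$ with $N \cong M^{\oplus m}$ yields
\[ N^{\otimes n} \cong \bigoplus_{(i_1,\ldots,i_n)\in\{1,\ldots,m\}^n} M^{\otimes n} \cong (M^{\otimes n})^{\oplus m^n}. \]
This is where all the work effectively happens; the rest is bookkeeping.

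Next I would use the fact that taking the core commutes with finite direct sums. Indeed, by the Krull--Schmidt theorem, if $X$ and $Y$ are $kG$-modules then $\core_G(X\oplus Y) \cong \core_G(X) \oplus \core_G(Y)$, since the indecomposable projective summands of $X \oplus Y$ are exactly those of $X$ together with those of $Y$. Applied inductively to the decomposition above, this gives $\core_G(N^{\otimes n}) \cong \core_G(M^{\otimes n})^{\oplus m^n}$, and hence
\[ \cc_n^G(N) = m^n \cdot \cc_n^G(M). \]

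Finally, taking $n$-th roots gives $\sqrt[n]{\cc_n^G(N)} = m \cdot \sqrt[n]{\cc_n^G(M)}$ for every $n$, and taking $\limsup_{n\to\infty}$ yields $\npj_G(N) = m \cdot \npj_G(M)$, as required. There is no real obstacle here: the only point that deserves a sentence of justification is the interchange of $\core_G$ with direct sums, which is immediate from Krull--Schmidt. Note that this computation is strictly stronger than what Theorem~\ref{th:seq-sum} would give (which only yields the upper bound $\npj_G(N) \le m\,\npj_G(M)$ via subadditivity), and it recovers the matching lower bound as well.
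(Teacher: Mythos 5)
Your proof is correct and follows essentially the same route as the paper: expand $N^{\otimes n}$ as $m^n$ copies of $M^{\otimes n}$, conclude $\cc_n^G(N)=m^n\cc_n^G(M)$, and take $n$th roots and $\limsup$. The only difference is that you spell out the (correct, Krull--Schmidt) justification that $\core_G$ commutes with direct sums, which the paper leaves implicit.
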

\begin{proof}
The module $N^{\otimes n}$ is isomorphic to a direct sum of $m^n$ copies of $M^{\otimes n}$,
so we have $\cc_n^G(N) = m^n \cc_n^G(M)$ and $\sqrt[n]{\cc_n^G(M)}=m (\sqrt[n]{\cc_n^G(M)})$.
Now take $\displaystyle\limsup_{n\to\infty}$.
\end{proof}

Here is another useful bound.

\begin{theorem}\label{th:ge-m}
If $M_1\otimes \dots \otimes M_m$ is not projective then
$\npj_G(M_1\oplus \dots\oplus M_m) \ge m$.
\end{theorem}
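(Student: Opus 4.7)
The plan is to expand $N^{\otimes n}$ with $N = M_1 \oplus \cdots \oplus M_m$ as a direct sum and identify a large family of summands, each isomorphic to a nonzero tensor power of $M_1 \otimes \cdots \otimes M_m$. This reduces the problem to estimating a central multinomial coefficient.

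Concretely, I would write
$$N^{\otimes n} \cong \bigoplus_{(i_1,\ldots,i_n)\in\{1,\ldots,m\}^n} M_{i_1}\otimes \cdots \otimes M_{i_n}.$$
Since the symmetric monoidal structure on $kG$-modules makes each summand depend, up to isomorphism, only on the multiplicities $n_i = \#\{j : i_j = i\}$, in the case $n = m\ell$ all tuples with $n_i = \ell$ for every $i$ contribute the same module $(M_1\otimes\cdots\otimes M_m)^{\otimes \ell}$. The number of such tuples is the multinomial coefficient $\binom{m\ell}{\ell,\ldots,\ell} = (m\ell)!/(\ell!)^m$. Because taking the core is additive over direct sums (by Krull--Schmidt), and because the hypothesis together with Lemma~\ref{le:powerproj} ensures that $(M_1\otimes\cdots\otimes M_m)^{\otimes\ell}$ is not projective, hence has core of positive dimension, we obtain
$$\cc_{m\ell}^G(N) \ge \binom{m\ell}{\ell,\ldots,\ell}.$$

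To conclude, I would take $m\ell$-th roots and let $\ell\to\infty$. Stirling's formula gives $\sqrt[m\ell]{(m\ell)!/(\ell!)^m} \to m$, and since $\npj_G(N)$ is a $\limsup$ over all $n$ it dominates the limit along the subsequence $n = m\ell$, so $\npj_G(N)\ge m$. I do not expect a serious obstacle: the decomposition of $N^{\otimes n}$ is an honest direct sum rather than a filtration as in Theorem~\ref{th:seq-sum}, so no argument is needed to peel off possible projective contributions coming from the other summands. The only calculation needed beyond this is the standard asymptotic $\sqrt[m\ell]{\binom{m\ell}{\ell,\ldots,\ell}} \to m$, which is immediate from Stirling.
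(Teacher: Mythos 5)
Your proposal is correct, and it follows the same basic strategy as the paper: expand $(M_1\oplus\cdots\oplus M_m)^{\otimes n}$ as a direct sum of mixed tensor products and bound $\cc_n^G$ from below by counting non-projective summands, with Lemma~\ref{le:powerproj} supplying the non-projectivity. The difference lies in the counting step. You keep only the balanced summands with $n=m\ell$, each isomorphic to $(M_1\otimes\cdots\otimes M_m)^{\otimes\ell}$, which gives $\cc_{m\ell}^G \ge (m\ell)!/(\ell!)^m$ and then requires Stirling's formula and passage to the subsequence $n=m\ell$. The paper instead notes that \emph{every} summand $M_1^{\otimes i_1}\otimes\cdots\otimes M_m^{\otimes i_m}$ with $i_1+\cdots+i_m=n$ is non-projective: if one were projective, then tensoring it with the missing factors (a projective tensor anything is projective) would make a power of $M_1\otimes\cdots\otimes M_m$ projective, contradicting Lemma~\ref{le:powerproj}. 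This yields the exact inequality $\cc_n^G(M_1\oplus\cdots\oplus M_m)\ge m^n$ for all $n$, from which $\npj_G\ge m$ follows with no asymptotic estimate at all. So your route trades that one extra observation for the standard multinomial asymptotics; both arguments are complete, the paper's being marginally shorter and giving the cleaner intermediate bound $\cc_n^G\ge m^n$.
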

\begin{proof}
By Lemma~\ref{le:powerproj}, no power of
$M_1\otimes \dots \otimes M_m$ is projective, so
neither is any module of the form $M_1^{i_1}\otimes \dots \otimes M_m^{i_m}$.
The module $(M_1\oplus \dots \oplus M_m)^{\otimes n}$ thus has at least $m^n$
non-projective summands, so we have
\[ \cc_n^G(M_1\oplus \dots \oplus M_m) \ge m^n \]
and
\[ \sqrt[n]{\cc_n^G(M_1\oplus \dots \oplus M_m)} \ge m. \]
Now take $\displaystyle\limsup_{n\to \infty}$.
\end{proof}

\begin{cor}\label{co:k-plus-M}
If $M$ is not projective then $\npj_G(k \oplus M) \ge 2$.
\end{cor}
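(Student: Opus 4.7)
The plan is to apply Theorem~\ref{th:ge-m} directly with $m=2$, taking $M_1 = k$ and $M_2 = M$. The tensor product $M_1 \otimes M_2 = k \otimes M \cong M$, and by hypothesis $M$ is not projective, so the hypothesis of Theorem~\ref{th:ge-m} is satisfied. The conclusion $\npj_G(M_1 \oplus M_2) \ge 2$ is then exactly what we want.

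There is essentially no obstacle: this is a one-line deduction from the preceding theorem once one notices that $k$ is the tensor-identity. The only minor point worth checking is that the hypothesis ``$M_1 \otimes \dots \otimes M_m$ is not projective'' in Theorem~\ref{th:ge-m} reduces, when one of the factors is $k$, to the non-projectivity of the remaining factor, which is immediate. So the full proof proposal is simply to cite Theorem~\ref{th:ge-m} with $(M_1, M_2) = (k, M)$.
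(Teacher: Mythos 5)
Your proposal is correct and is exactly the paper's own argument: the paper also deduces the corollary by taking $m=2$, $M_1=k$ and $M_2=M$ in Theorem~\ref{th:ge-m}, using $k\otimes M\cong M$ non-projective. Nothing further is needed.
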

\begin{proof}
This follows by taking $m=2$, $M_1=k$ and $M_2=M$ in Theorem \ref{th:ge-m}.
\end{proof}

\begin{rk}
We shall prove in the next section,
using the theory of submultiplicative sequences, that
$\npj_G(k \oplus M)$ is always equal to $1+\npj_G(M)$.
\end{rk}

\section{Submultiplicative Sequences}\label{se:submult}

In this section, we investigate the submultiplicative properties of
$\npj_G$, and deduce Theorem \ref{th:k-plus-M}.
We shall revisit this from the point of view of Banach
algebras and Gelfand's spectral radius theorem later on,
but for the moment we shall try to stay elementary.

\begin{defn}
We say that a sequence $c_0,c_1,c_2,\dots$ of non-negative real
numbers is \emph{submultiplicative} if $c_0=1$, and for all $m,n\ge 0$ we have
$c_{m+n} \le c_m.c_n$.
\end{defn}

\begin{lemma}\label{le:ccMsubmult}
If $M$ is a $kG$-module then $\cc_n(M)$ is a submultiplicative
sequence, provided that $p$ divides $|G|$.
\end{lemma}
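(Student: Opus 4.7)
The plan is to verify the two defining conditions: that $\cc_0(M) = 1$, and that $\cc_{m+n}(M) \le \cc_m(M)\cc_n(M)$ for all $m, n \ge 0$.

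First I would handle the base case $\cc_0(M) = \dim \core_G(M^{\otimes 0}) = \dim \core_G(k)$. Since $p$ divides $|G|$, the trivial module $k$ is not projective, so $\core_G(k) = k$ and $\cc_0(M) = 1$. This is the only place the hypothesis on $p$ enters.

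For the submultiplicative inequality, the key observation is that tensoring with a projective module yields a projective module, so projective summands can be stripped off before taking the tensor product. Writing $M^{\otimes m} \cong \core_G(M^{\otimes m}) \oplus P$ and $M^{\otimes n} \cong \core_G(M^{\otimes n}) \oplus Q$ with $P$ and $Q$ projective, we get
\[ M^{\otimes(m+n)} \cong M^{\otimes m} \otimes M^{\otimes n} \cong \bigl(\core_G(M^{\otimes m}) \otimes \core_G(M^{\otimes n})\bigr) \oplus \proj, \]
so that $\core_G(M^{\otimes(m+n)})$ is isomorphic to a summand of $\core_G(M^{\otimes m}) \otimes \core_G(M^{\otimes n})$, hence has dimension at most $\dim \core_G(M^{\otimes m}) \cdot \dim \core_G(M^{\otimes n}) = \cc_m(M)\cc_n(M)$.

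There is no real obstacle here — the argument is essentially a bookkeeping exercise using that tensor products distribute over direct sums and that the tensor of any module with a projective is projective, together with the Krull--Schmidt Theorem to make $\core_G$ well-defined on each piece.
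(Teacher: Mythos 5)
Your proof is correct and follows the same route as the paper, whose entire argument is the identity $\core_G(M^{\otimes m}\otimes M^{\otimes n})\cong \core_G(\core_G(M^{\otimes m})\otimes\core_G(M^{\otimes n}))$ — exactly what you derive by stripping off projective summands before tensoring. Your explicit check that $\cc_0(M)=1$ (using $p\mid |G|$ so that $k$ is not projective) is left implicit in the paper but is indeed the only place that hypothesis is needed.
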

\begin{proof}
This follows from the fact that
\begin{equation*}
\core_G(M^{\otimes m}\otimes M^{\otimes n})\cong
\core_G(\core_G(M^{\otimes m}) \otimes \core_G(M^{\otimes n})).
\qedhere
\end{equation*}
\end{proof}

\begin{lemma}[Fekete \cite{Fekete:1923a}]\label{le:Fekete}
If $c_n$ is a submultiplicative sequence then
\[ \limsup_{n\to\infty} \sqrt[n]{c_n} = \lim_{n\to\infty}\sqrt[n]{c_n}= \inf_{n\ge 1}
  \sqrt[n]{c_n}. \]
\end{lemma}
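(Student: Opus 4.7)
The plan is to prove this classical fact by the standard Fekete-style division argument. Set $L = \inf_{n\ge 1}\sqrt[n]{c_n} \in [0,\infty)$. The inequality $\liminf_{n\to\infty}\sqrt[n]{c_n} \ge L$ is immediate from the definition of infimum, so the whole content is to prove the reverse inequality for the $\limsup$, namely
\[ \limsup_{n\to\infty}\sqrt[n]{c_n} \le L. \]
Once this is in hand, $\liminf$, $\limsup$ and $\inf$ all coincide, so the limit exists and equals $L$.

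First I would dispose of the degenerate case. If $c_r = 0$ for some $r\ge 1$, then by submultiplicativity $c_n \le c_r c_{n-r} = 0$ for all $n\ge r$, so $\sqrt[n]{c_n}=0$ eventually and the statement is trivial with all three quantities equal to $0$. So I may assume $c_n > 0$ for every $n$.

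The main step is then this: fix $\varepsilon > 0$ and, using the definition of infimum, choose $m\ge 1$ with $\sqrt[m]{c_m} < L + \varepsilon$. For any $n\ge m$ perform the division $n = qm + r$ with $0 \le r < m$ and iterate submultiplicativity to get
\[ c_n \le c_m^{\,q}\, c_r, \]
with the convention $c_0 = 1$. Taking $n$th roots,
\[ \sqrt[n]{c_n} \le c_m^{q/n}\, c_r^{1/n}. \]
As $n\to\infty$ with $m$ fixed, $q/n \to 1/m$ and $c_r^{1/n}\to 1$ because $r$ ranges only over the finite set $\{0,1,\dots,m-1\}$ and each $c_r$ is a fixed positive real. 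Hence
\[ \limsup_{n\to\infty}\sqrt[n]{c_n} \le c_m^{1/m} < L + \varepsilon. \]
Letting $\varepsilon\to 0$ gives $\limsup_{n\to\infty}\sqrt[n]{c_n}\le L$, completing the argument.

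There is no real obstacle here; the only mildly delicate point is the bookkeeping for the remainder term $c_r^{1/n}$, which must be uniformly bounded as $n\to\infty$. This is handled by taking $\max_{0\le r<m} c_r$ as a single constant $C=C(m)$ independent of $n$, so that $c_r^{1/n}\le C^{1/n}\to 1$. Everything else is just the identity $q/n\to 1/m$ from the division algorithm.
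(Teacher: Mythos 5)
Your proof is correct and follows essentially the same route as the paper's: dispose of the case where some $c_r=0$, then use division with remainder $n=qm+r$ and submultiplicativity to get $c_n\le c_m^{\,q}c_r$, and let $n\to\infty$ with $m$ fixed. The only (harmless) difference is that you pass to the limit $q/n\to 1/m$ rather than using the inequality $q/n\le 1/m$ as the paper does, which if anything handles the case $c_m<1$ a little more cleanly.
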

\begin{proof}
It suffices to show that $\limsup_{n\to\infty} \sqrt[n]{c_n} \leq \inf_{n\ge 1}
\sqrt[n]{c_n}$. If some $c_n$ is equal to zero, then so are all subsequent ones. So we
assume that all $c_n>0$. Suppose that $L$ is a number such that
\[ \inf_{n\to\infty}\sqrt[n]{c_n} < L. \]
Then there is an $m\ge 1$ with $\sqrt[m]{c_m}<L$. For $n>m$ we use  division
with remainder to write $n=mq_m+r_m$ with $0\le r_m<m$.
By the definition of submultiplicativity, we have
\[ c_n = c_{mq_m+r_m}\le c_{mq_m}c_{r_m}\le (c_{m})^{q_m}c_{r_m}. \]
Now $q_m\le n/m$, so $q_m/n\le 1/m$. So we have
\[ \sqrt[n]{c_n} \le \sqrt[m]{c_m}\sqrt[n]{c_{r_m}} <
  L.\sqrt[n]{c_{r_m}}. \]
As $n$ tends to infinity, the numbers
$\sqrt[n]{c_0},\dots,\sqrt[n]{c_{m-1}}$ all tend to one, and so
\begin{equation*}
\limsup_{n\to\infty}\sqrt[n]{c_n}\le L.
\qedhere
\end{equation*}
\end{proof}

\begin{theorem}\label{th:inf}
If $M$ is a $kG$-module then
$\npj_G(M)=\displaystyle\lim_{n\to\infty}\sqrt[n]{\cc_n^G(M)}
=\inf_{n\ge 1}\sqrt[n]{\cc_n^G(M)}$.
\end{theorem}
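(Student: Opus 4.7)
The theorem is set up to follow almost immediately from Fekete's lemma (Lemma~\ref{le:Fekete}) applied to the sequence $\cc_n^G(M)$. The plan is therefore to verify that $\cc_n^G(M)$ is a submultiplicative sequence in the sense of the definition just given, and then to quote Fekete's lemma, whose conclusion matches the statement of the theorem verbatim once the limsup is identified with $\npj_G(M)$ via Definition~\ref{def:npj}.

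In the main case, where $p$ divides $|G|$, submultiplicativity is exactly the content of Lemma~\ref{le:ccMsubmult}. I would pause briefly to confirm the normalisation $\cc_0^G(M)=1$ required by the definition of a submultiplicative sequence: with the convention $M^{\otimes 0} = k$ and under the hypothesis $p\mid |G|$, the trivial module is not projective, so $\core_G(k)=k$ and $\cc_0^G(M)=1$. Fekete's lemma then yields
\[
\limsup_{n\to\infty}\sqrt[n]{\cc_n^G(M)}
=\lim_{n\to\infty}\sqrt[n]{\cc_n^G(M)}
=\inf_{n\ge 1}\sqrt[n]{\cc_n^G(M)},
\]
and the leftmost quantity is $\npj_G(M)$ by definition.

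The remaining case $p\nmid|G|$ is dispatched in one line: by Maschke's theorem $kG$ is semisimple, every tensor power of $M$ is projective, so $\cc_n^G(M)=0$ for all $n\ge 1$, and each of the three expressions in the statement equals zero.

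There is no real obstacle, since both ingredients have been set up in advance; the only tiny subtlety is making sure that the hypothesis $c_0=1$ of Fekete's lemma is satisfied in the main case, and that the semisimple case is not inadvertently excluded by the $p\mid |G|$ hypothesis in Lemma~\ref{le:ccMsubmult}.
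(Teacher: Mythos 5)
Your proposal is correct and follows exactly the paper's route: the paper proves Theorem~\ref{th:inf} by citing Lemma~\ref{le:ccMsubmult} (submultiplicativity of $\cc_n^G(M)$) together with Fekete's Lemma~\ref{le:Fekete}. Your extra checks --- that $\cc_0^G(M)=1$ when $p$ divides $|G|$, and the trivial semisimple case --- are sound and, if anything, slightly more careful than the paper's two-line proof.
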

\begin{proof}
This follows from Lemmas \ref{le:ccMsubmult} and \ref{le:Fekete}.
\end{proof}

\begin{prop}\label{pr:binom2}
Suppose that $a_n$ and $b_n$ are submultiplicative sequences. Define
a sequence $c_n$ by
\[ c_n = \sum_{i=0}^n\binom{n}{i}a_ib_{n-i}. \]
Then $c_n$ is also a submultiplicative sequence, and we have
\[  \lim_{n\to\infty}\sqrt[n]{c_n}=
\lim_{n\to\infty}\sqrt[n]{a_n} +\lim_{n\to\infty}\sqrt[n]{b_n}. \]
\end{prop}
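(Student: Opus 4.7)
The plan is first to verify that $c_n$ is submultiplicative so that Fekete's lemma (Lemma~\ref{le:Fekete}) applies, and then to sandwich the common value $\lim \sqrt[n]{c_n} = \inf \sqrt[n]{c_n}$ between matching upper and lower bounds equal to $\alpha + \beta$, where $\alpha = \lim \sqrt[n]{a_n}$ and $\beta = \lim \sqrt[n]{b_n}$.

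For submultiplicativity, $c_0 = a_0 b_0 = 1$ is immediate. For the inequality $c_{m+n} \le c_m c_n$, I would expand
\[
c_m c_n = \sum_{i=0}^m \sum_{j=0}^n \binom{m}{i}\binom{n}{j} a_i a_j\, b_{m-i} b_{n-j},
\]
then use submultiplicativity of the $a$'s and $b$'s in the reverse direction, namely $a_i a_j \ge a_{i+j}$ and $b_{m-i} b_{n-j} \ge b_{(m+n)-(i+j)}$, and finally regroup by $k = i+j$. The inner coefficient $\sum_{i+j=k}\binom{m}{i}\binom{n}{j}$ is $\binom{m+n}{k}$ by Vandermonde's identity, so the double sum collapses precisely to $c_{m+n}$.

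For the upper bound on $\lim \sqrt[n]{c_n}$, Lemma~\ref{le:binom} applied directly to the defining formula gives $\limsup \sqrt[n]{c_n} \le \alpha + \beta$. For the matching lower bound, the key observation is that since $\alpha = \inf_{n\ge 1}\sqrt[n]{a_n}$ (by Fekete applied to $a_n$), we have $a_n \ge \alpha^n$ for all $n$, and similarly $b_n \ge \beta^n$. Substituting termwise into the definition of $c_n$ and applying the binomial theorem yields
\[
c_n \ge \sum_{i=0}^n \binom{n}{i}\alpha^i \beta^{n-i} = (\alpha + \beta)^n,
\]
so $\sqrt[n]{c_n} \ge \alpha + \beta$ for every $n \ge 1$.

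The main obstacle is the Vandermonde regrouping in the submultiplicativity step; once that is in place the sandwich is routine. The edge cases $\alpha = 0$ or $\beta = 0$ (which can only occur when the corresponding sequence is eventually zero, because $a_0 = b_0 = 1$) cause no trouble, as the lower bound $c_n \ge (\alpha + \beta)^n$ is still valid with the convention $0^0 = 1$.
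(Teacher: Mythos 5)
Your proof is correct, and its overall structure matches the paper's: submultiplicativity of $c_n$ via the Vandermonde identity $\binom{m+n}{\ell}=\sum_{i+j=\ell}\binom{m}{i}\binom{n}{j}$ followed by regrouping, and the upper bound $\limsup_{n\to\infty}\sqrt[n]{c_n}\le\alpha+\beta$ from Lemma~\ref{le:binom}. Where you genuinely diverge is the lower bound: the paper argues with an $\varepsilon$ and positive constants $A,B$ such that $a_n\ge A(\alpha-\varepsilon)^n$ and $b_n\ge B(\beta-\varepsilon)^n$ for all $n$, concluding $\lim_{n\to\infty}\sqrt[n]{c_n}\ge\alpha+\beta-2\varepsilon$ for every $\varepsilon>0$; you instead use Fekete's Lemma~\ref{le:Fekete} to get the exact termwise bounds $a_n\ge\alpha^n$ and $b_n\ge\beta^n$ from $\alpha=\inf_{n\ge1}\sqrt[n]{a_n}$, whence $c_n\ge(\alpha+\beta)^n$ by the binomial theorem. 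Your version is arguably cleaner: it is $\varepsilon$-free, gives $\sqrt[n]{c_n}\ge\alpha+\beta$ for every $n$ rather than only in the limit, and sidesteps the small unaddressed point in the paper's argument that the constants $A,B$ require the early terms of the sequences to be nonzero. One side remark of yours is false, though harmless: $\alpha=0$ does not force the sequence to be eventually zero (take $a_n=1/n!$, which is submultiplicative with all terms positive and $\sqrt[n]{a_n}\to0$); but your lower bound only uses $a_n\ge\alpha^n$ (with $0^0=1$), which holds in all cases, so the proof stands.
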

\begin{proof}
Using the fact that
\[ \binom{m+n}{\ell} =\sum_{i+j=\ell}\binom{m}{i}\binom{n}{j}\]
and the submultiplicativity of the sequences $a_n$ and $b_n$, we have
\[ \sum_{\ell=0}^{m+n}\binom{m+n}{\ell}a_\ell b_{m+n-\ell} \le
\left(\sum_{i=0}^m\binom{m}{i}a_ib_{m-i}\right).
\left(\sum_{j=0}^n\binom{n}{j}a_jb_{n-j}\right) \]
and so the sequence $c_n$ is submultiplicative.

By Lemma \ref{le:binom} we have
\[ \lim_{n\to\infty}\sqrt[n]{c_n}\le
\lim_{n\to\infty}\sqrt[n]{a_n} +\lim_{n\to\infty}\sqrt[n]{b_n}. \]
The reverse inequality is proved similarly. If
$\displaystyle\lim_{n\to\infty}\sqrt[n]{a_n}=\alpha$ and
$\displaystyle\lim_{n\to\infty}\sqrt[n]{b_n}=\beta$ then given $\ep>0$ there
exist positive constants $A$ and $B$ such that for all $n\ge 0$ we have
$a_n\ge A(\alpha-\ep)^n$ and $b_n\ge B(\beta-\ep)^n$. So for
all $\ep>0$ there is a positive constant $C=AB$ such that for all
$n\ge 0$ we  have
\begin{equation*}
c_n \ge \sum_{i=0}^n \binom{n}{i}A(\alpha-\ep)^iB(\beta-\ep)^{n-i}
= C(\alpha+\beta-2\ep)^n,
\end{equation*}
and so
\begin{equation*}
\lim_{n\to\infty}\sqrt[n]{c_n}\ge\alpha+\beta-2\ep=
\lim_{n\to\infty}\sqrt[n]{a_n} +\lim_{n\to\infty}\sqrt[n]{b_n}-2\ep.
\qedhere
\end{equation*}
\end{proof}

\begin{theorem}\label{th:k-plus-M}
If $p$ divides $|G|$ and $M$ is a $kG$-module then we have $\npj_G(k\oplus M)=1+\npj_G(M)$.
\end{theorem}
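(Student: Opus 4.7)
The plan is to reduce this identity to Proposition~\ref{pr:binom2}, whose hypotheses fit almost verbatim with the decomposition of $(k\oplus M)^{\otimes n}$.

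First I would expand the tensor power using the binomial identity for direct sums: distributing the tensor product over the direct sum gives
\[ (k\oplus M)^{\otimes n} \;\cong\; \bigoplus_{i=0}^n \binom{n}{i} M^{\otimes i}. \]
Since taking the core distributes over direct sums, and $k^{\otimes j}\cong k$ is non-projective (because $p$ divides $|G|$), this yields the identity
\[ \cc_n^G(k\oplus M) \;=\; \sum_{i=0}^n \binom{n}{i}\,\cc_i^G(M)\,\cc_{n-i}^G(k). \]

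Next I would observe that $\cc_j^G(k)=\dim\core_G(k)=1$ for every $j\ge 0$, again because $k$ is a non-projective module of dimension one. Thus setting $a_i=\cc_i^G(M)$ and $b_j=\cc_j^G(k)=1$, both sequences are submultiplicative (by Lemma~\ref{le:ccMsubmult} for $a_i$, and trivially for $b_j\equiv 1$), and
\[ \cc_n^G(k\oplus M) \;=\; \sum_{i=0}^n \binom{n}{i} a_i b_{n-i}. \]

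Finally I would apply Proposition~\ref{pr:binom2} to conclude
\[ \npj_G(k\oplus M) \;=\; \lim_{n\to\infty}\sqrt[n]{\cc_n^G(k\oplus M)}
\;=\; \lim_{n\to\infty}\sqrt[n]{a_n} + \lim_{n\to\infty}\sqrt[n]{b_n}
\;=\; \npj_G(M)+1, \]
where the first equality uses Theorem~\ref{th:inf}. There is no real obstacle here; the only subtlety worth flagging is the hypothesis that $p$ divides $|G|$, which is exactly what makes $k$ non-projective so that $\cc_n^G(k)=1$ rather than $0$ (and in the opposite case the formula fails, since then $\npj_G(k\oplus M)=\npj_G(M)$ by Lemma~\ref{le:core}).
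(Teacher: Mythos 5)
Your proposal is correct and is essentially identical to the paper's proof: both expand $(k\oplus M)^{\otimes n}$ binomially to get $\cc_n^G(k\oplus M)=\sum_{i=0}^n\binom{n}{i}\cc_i^G(M)$ and then apply Proposition~\ref{pr:binom2} with $a_n=\cc_n^G(M)$ and $b_n=1$. Your extra remarks on why $\cc_j^G(k)=1$ when $p$ divides $|G|$ are accurate and just make explicit what the paper leaves implicit.
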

\begin{proof}
We have
\[ \cc_n^G(k\oplus M)=\sum_{i=0}^n\binom{n}{i}\cc_i^G(M). \]
So we can apply Proposition~\ref{pr:binom2} with $a_n=\cc_n^G(M)$,
$b_n=1$, and $c_n=\cc_n^G(k\oplus M)$.
\end{proof}

\begin{cor}\label{co:ab}
If If $p$ divides $|G|$, $M$ is a $kG$-module, and $N$ is isomorphic to a direct sum of
$a$ copies of $k$ and $b$ copies of $M$ then we have
$\npj_G(N)=a+b\npj_G(M)$.
\end{cor}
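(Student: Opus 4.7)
The plan is to combine two previously established facts: Theorem~\ref{th:m.M} handles scaling by a direct-sum multiplicity, and Theorem~\ref{th:k-plus-M} handles adjoining a single trivial summand. Iterating the latter $a$ times should add $a$ to the invariant, while applying the former once introduces the factor $b$. Both inputs require $p\mid |G|$, which is part of the hypothesis of the corollary.

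First I would group $N$ as $N \cong k^{\oplus a} \oplus L$ where $L$ denotes the direct sum of $b$ copies of $M$. Theorem~\ref{th:m.M} immediately gives $\npj_G(L) = b \cdot \npj_G(M)$.

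Then I would induct on $a \ge 0$. The base case $a = 0$ is precisely the statement just noted. For the inductive step, assume $\npj_G(k^{\oplus (a-1)} \oplus L) = (a-1) + b\cdot \npj_G(M)$ and apply Theorem~\ref{th:k-plus-M} to the module $k^{\oplus(a-1)} \oplus L$ to obtain
\[
\npj_G(k^{\oplus a} \oplus L) = \npj_G\bigl(k \oplus (k^{\oplus (a-1)} \oplus L)\bigr) = 1 + \npj_G(k^{\oplus (a-1)} \oplus L) = a + b\cdot \npj_G(M).
\]

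There is no substantive obstacle: the entire content is already packaged in Theorems~\ref{th:m.M} and~\ref{th:k-plus-M}. As a sanity check and alternative single-shot route, one can expand
\[
\cc_n^G(N) = \sum_{i=0}^n \binom{n}{i} a^i b^{n-i}\, \cc_{n-i}^G(M),
\]
and recognise this as the convolution appearing in Proposition~\ref{pr:binom2}, with $a_n = a^n$ (trivially submultiplicative, with $n$th-root limit $a$) and $b_n = b^n \cc_n^G(M)$ (submultiplicative by Lemma~\ref{le:ccMsubmult}, with $n$th-root limit $b\cdot \npj_G(M)$); the proposition then yields $\npj_G(N) = a + b\cdot \npj_G(M)$ in one step.
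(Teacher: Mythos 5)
Your proof is correct and follows the same route as the paper, which simply says the result "follows inductively from Theorems~\ref{th:m.M} and~\ref{th:k-plus-M}"; you have just written out the induction on $a$ explicitly. Your alternative convolution argument via Proposition~\ref{pr:binom2} is also valid, but it is essentially the same machinery the paper already uses to prove Theorem~\ref{th:k-plus-M} itself.
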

\begin{proof}
This follows inductively from Theorems \ref{th:m.M} and \ref{th:k-plus-M}.
\end{proof}

\section{Tensor Products}

\begin{theorem}\label{th:tensor}
We have $\npj_G(M\otimes N) \le \npj_G(M)\npj_G(N)$.
\end{theorem}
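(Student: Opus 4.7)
The plan is to reduce the statement to an inequality between the sequences $\cc_n^G(M\otimes N)$ and $\cc_n^G(M)\cc_n^G(N)$, and then take $n$-th roots. The key observation is the natural isomorphism $(M\otimes N)^{\otimes n}\cong M^{\otimes n}\otimes N^{\otimes n}$, which replaces the tensor factor of interest with a tensor product of powers we already understand.

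First I would use the fact (already exploited in Lemma~\ref{le:core} and in the proof of Lemma~\ref{le:ccMsubmult}) that the tensor product of any module with a projective is projective, so passing to cores is compatible with tensor products up to projective summands. Concretely,
\[
\core_G(M^{\otimes n}\otimes N^{\otimes n})\cong \core_G\bigl(\core_G(M^{\otimes n})\otimes \core_G(N^{\otimes n})\bigr),
\]
and hence
\[
\cc_n^G(M\otimes N)=\dim\core_G(M^{\otimes n}\otimes N^{\otimes n})\le \dim\core_G(M^{\otimes n})\cdot \dim\core_G(N^{\otimes n})=\cc_n^G(M)\,\cc_n^G(N),
\]
the first inequality being because splitting off further projective summands can only decrease dimension.

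To finish, I would take $n$-th roots and pass to the $\limsup$ (or, equivalently, use Theorem~\ref{th:inf} and take the limit):
\[
\sqrt[n]{\cc_n^G(M\otimes N)}\le \sqrt[n]{\cc_n^G(M)}\cdot \sqrt[n]{\cc_n^G(N)},
\]
and letting $n\to\infty$ yields $\npj_G(M\otimes N)\le \npj_G(M)\npj_G(N)$.

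There is no real obstacle here; the only point that requires a moment's thought is the passage from $\core_G(M^{\otimes n}\otimes N^{\otimes n})$ to the product of dimensions of the individual cores, which rests entirely on the fact that projective $kG$-modules form a tensor ideal. One should also note that the statement is trivially valid when $p\nmid|G|$ (all modules are projective and everything is zero) or when either $M$ or $N$ is projective, so no hypothesis on $p\mid|G|$ is needed in the statement.
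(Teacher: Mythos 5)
Your proof is correct and follows essentially the same route as the paper: both use the commutation $(M\otimes N)^{\otimes n}\cong M^{\otimes n}\otimes N^{\otimes n}$, the fact that projectives form a tensor ideal to reduce to cores, the resulting inequality $\cc_n^G(M\otimes N)\le \cc_n^G(M)\,\cc_n^G(N)$, and then $n$-th roots and $\limsup$. No gaps.
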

\begin{proof}
We have
\[ \core_G(M\otimes N) = \core_G(\core_G(M) \otimes \core_G(N)). \]
Therefore
\[ \cc_n^G(M\otimes N) \le \cc_n^G(M)\cc_n^G(N) \]
and
\[ \sqrt[n]{\cc_n^G(M \otimes N)} \le \sqrt[n]{\cc_n^G(M)}\sqrt[n]{\cc_n^G(N)}. \]
Now apply $\displaystyle\limsup_{n\to\infty}$ to both sides.
\end{proof}

This inequality may be strict. For example, it is possible for $M\otimes N$ to
be projective with neither $M$ nor $N$ projective. However for tensor powers
of a single module, we have the following.

\begin{theorem}\label{th:tensor-powers}
We have $\npj_G(M^{\otimes m}) = \npj_G(M)^m$.
\end{theorem}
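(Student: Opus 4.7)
The plan is to observe the tautological identity $(M^{\otimes m})^{\otimes n} = M^{\otimes mn}$, which immediately gives
\[ \cc_n^G(M^{\otimes m}) = \cc_{mn}^G(M) \]
for all $n \ge 0$. From here the result is essentially a statement about subsequences of $\sqrt[k]{\cc_k^G(M)}$.

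First I would record the easy inequality $\npj_G(M^{\otimes m}) \le \npj_G(M)^m$, which follows by iterating Theorem~\ref{th:tensor}, although this is not strictly needed given the identity above. Then I would take $n$th roots of the identity to obtain
\[ \sqrt[n]{\cc_n^G(M^{\otimes m})} = \sqrt[n]{\cc_{mn}^G(M)} = \bigl(\sqrt[mn]{\cc_{mn}^G(M)}\bigr)^m. \]
By Theorem~\ref{th:inf}, the sequence $\sqrt[k]{\cc_k^G(M)}$ converges to $\npj_G(M)$ as $k \to \infty$, so its subsequence indexed by multiples of $m$ also converges to $\npj_G(M)$. Applying the definition of $\npj_G$ on the left and passing to the limit on the right, we conclude
\[ \npj_G(M^{\otimes m}) = \lim_{n\to\infty}\bigl(\sqrt[mn]{\cc_{mn}^G(M)}\bigr)^m = \npj_G(M)^m. \]

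There is no real obstacle here; the only slightly delicate point is that the equality of a $\limsup$ with a limit requires Theorem~\ref{th:inf} (otherwise one could only extract a $\le$ inequality from the subsequence relation). In particular, if $M$ is projective, both sides are $0$ by Lemma~\ref{le:dimM}\,(ii), and otherwise all $\cc_k^G(M)$ are positive integers by Lemma~\ref{le:powerproj}, so the $n$th roots above are well-defined, and the argument goes through uniformly in both cases.
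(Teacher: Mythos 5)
Your argument is correct: the identity $(M^{\otimes m})^{\otimes n}\cong M^{\otimes mn}$ does give $\cc_n^G(M^{\otimes m})=\cc_{mn}^G(M)$, and since Theorem~\ref{th:inf} guarantees that $\sqrt[k]{\cc_k^G(M)}$ actually converges to $\npj_G(M)$, the subsequence along multiples of $m$ converges to the same value, so the $\limsup$ defining $\npj_G(M^{\otimes m})$ is a genuine limit equal to $\npj_G(M)^m$. You are also right to flag that this is where the argument leans on Theorem~\ref{th:inf}: with only a $\limsup$ available one would merely get $\npj_G(M^{\otimes m})\ge$ (limsup along the subsequence is at most the full limsup raised appropriately), not equality.

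The paper takes a different, two-sided route that avoids invoking Theorem~\ref{th:inf} in this proof. The easy inequality $\npj_G(M^{\otimes m})\le\npj_G(M)^m$ comes from iterating Theorem~\ref{th:tensor}; for the reverse, the paper writes $n=ms+i$ with $0\le i<m$, uses $M^{\otimes n}\cong M^{\otimes i}\otimes(M^{\otimes m})^{\otimes s}$ to get $\cc_n^G(M)\le(\dim M)^m\,\cc_s^G(M^{\otimes m})$, and takes $\limsup$ directly, with the stray factor $\sqrt[s]{\dim M}$ tending to $1$. So the paper's argument handles all $n$ (not just multiples of $m$) by padding with at most $m-1$ extra tensor factors, and works purely with $\limsup$s; your argument is shorter and needs neither the padding nor Theorem~\ref{th:tensor}, at the cost of making the result logically dependent on the submultiplicativity/Fekete machinery (Lemma~\ref{le:ccMsubmult} and Lemma~\ref{le:Fekete}). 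Since Theorem~\ref{th:inf} is established earlier in the paper, your dependence is legitimate, and your handling of the degenerate cases (projective $M$, or $\cc_k=0$) is fine; both proofs are valid.
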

\begin{proof}
By Theorem \ref{th:tensor} we have $\npj_G(M^{\otimes m})\le \npj_G(M)^m$.
Conversely,
if $n=ms+i$ with $0\le i< m$ then
\[ M^{\otimes n} =M^{\otimes i} \otimes (M^{\otimes m})^{\otimes s} \]
and so
\[ \cc_n^G(M) \le (\dim M)^m \cc_s^G(M^{\otimes m}). \]
Thus
\begin{equation*}
\sqrt[n]{\cc_n^G(M)} \le \sqrt[ms]{\cc_n^G(M)}
\le \sqrt[s]{\dim M} \sqrt[m]{\sqrt[s]{\cc_s^G(M^{\otimes m})}}.
\end{equation*}
Applying $\displaystyle\limsup_{n\to\infty}$, the factor $\sqrt[s]{\dim M}$ tends to $1$.
It follows that
\begin{equation*}
\npj_G(M) \le \sqrt[m]{\npj_G(M^{\otimes m})}.
\qedhere
\end{equation*}
\end{proof}

The following conjecture is based on extensive computations,
but we have failed to find a proof.
See Remark~\ref{rk:Banach} for an interpretation 
in terms of Banach algebas.

\begin{conj}\label{conj:EndM}
We have $\npj_G(M\otimes M^*) = \npj_G(M)^2$.
\end{conj}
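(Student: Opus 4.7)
The upper bound $\npj_G(M\otimes M^*)\le\npj_G(M)^2$ is immediate: Theorem~\ref{th:tensor} gives $\npj_G(M\otimes M^*)\le\npj_G(M)\cdot\npj_G(M^*)$, and Lemma~\ref{le:dual} identifies the second factor with $\npj_G(M)$. The content of Conjecture~\ref{conj:EndM} therefore lies in the reverse inequality, which I would attack through the Banach algebra framework of Section~\ref{se:Banach}.

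Write $r=\npj_G(M)$ and appeal to the theorem on species quoted in the introduction to rewrite
\[ \npj_G(M\otimes M^*)=\sup_{s}|s([M])\cdot s([M^*])|, \]
the supremum ranging over core-bounded species $s\colon a(G)\to\bC$ and being attained by Gelfand-space compactness. Because $|s([M])|\le r$ and $|s([M^*])|\le\npj_G(M^*)=r$ for every such $s$, achieving the value $r^2$ is equivalent to exhibiting a single species $s^{\star}$ which is \emph{simultaneously} extremal at $[M]$ and $[M^*]$. Thus the conjecture reduces to a common-extremizer statement for the pair $([M],[M^*])$.

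The natural symmetry to invoke is the involution $\tau$ on core-bounded species defined by $(\tau s)([X])=\overline{s([X^*])}$; it does land in the space of core-bounded species because duality commutes with $\oplus$ and $\otimes$ and because $\dim\core_G(X^*)=\dim\core_G(X)$. Writing $K$ for the compact nonempty set of core-bounded species with $s([M])=r$, a species $s^{\star}$ of the required kind exists precisely when $K\cap\tau(K)\ne\emptyset$. My plan is to attempt a symmetrization on the compact convex hull of $K\cup\tau(K)$ inside the Gelfand space, or to interpolate continuously between $s_0\in K$ and $\tau s_0$, aiming to produce a $\tau$-fixed species that remains extremal for $[M]$.

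The main obstacle is that the completion of $a(G)$ is not a $C^*$-algebra---indeed $\dim\core_G(M\otimes M^*)$ is typically far smaller than $(\dim\core_G(M))^2$---so the $C^*$-identity $\rho(a^*a)=\rho(a)^2$ is not available to force equality. One therefore must use representation-theoretic structure particular to $a(G)$: I would try to combine part~(xvii) of Theorem~\ref{th:omni} to reduce to elementary abelian $p$-subgroups with the Benson--Parker species theory, hoping that Auslander--Reiten relations between indecomposables and their duals pin down extremal species for $[M]$ and $[M^*]$ at the same point of the spectrum. I expect this simultaneous-extremizer step to be the essential difficulty, the reformulation itself being formal.
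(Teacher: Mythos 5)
The statement you were asked to prove is stated in the paper only as a conjecture (Conjecture~\ref{conj:EndM}); the authors say explicitly that they have been unable to find a proof, so there is no proof in the paper to compare yours against, and your proposal does not close the gap either. What you do get right: the inequality $\npj_G(M\otimes M^*)\le\npj_G(M)^2$ via Theorem~\ref{th:tensor} and Lemma~\ref{le:dual}, and the reformulation of the hard direction as the existence of a single core-bounded species that is simultaneously extremal at $[M]$ and at $[M^*]$ (attainment of the supremum being supplied by Theorem~\ref{th:species}/Theorem~\ref{th:SpecM}). This reformulation is essentially the one the authors record in Remark~\ref{rk:Banach}: your involution $\tau$ is the action on the Gelfand space of the algebra involution $[X]\mapsto[X^*]$ of $\Ahat(G)$, and the conjecture would follow if every core-bounded species were $\tau$-fixed, i.e.\ if $\Ahat(G)$ were a \emph{symmetric} Banach algebra. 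That is exactly what is not known.

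The concrete gap is the step you yourself flag as the "essential difficulty": producing the common extremizer. The two mechanisms you suggest would both fail as stated. The set of core-bounded species is the maximal ideal space of $\Ahat(G)$; it is compact, but it is not convex in any usable sense, because a convex combination of two ring homomorphisms is in general not multiplicative, so "symmetrization on the compact convex hull of $K\cup\tau(K)$" leaves the space of species entirely. Likewise there is no reason for the Gelfand space to be path-connected, so "interpolating continuously between $s_0$ and $\tau s_0$" has no foundation. The reduction to elementary abelian subgroups via Theorem~\ref{th:omni}\,(xvii) does not by itself help, since the conjecture is open already for $E=\bZ/p\times\bZ/p$, where the indecomposables are unclassifiable. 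In short: the easy half and the reformulation are correct and agree with the authors' own analysis, but no proof of the conjecture is given, and the specific symmetrization ideas proposed cannot be carried out.
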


\begin{theorem}\label{th:Omega}
We have $\npj_G(\Omega M) = \npj_G(M)$.
\end{theorem}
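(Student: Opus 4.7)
The plan is to reduce this to the already-established inequality for tensor products, using the key identity $\core_G(\Omega^n k \otimes M) \cong \Omega^n M$ recorded just after Lemma~\ref{le:Omega-k}, together with Lemma~\ref{le:core} (which says the invariant $\npj_G$ depends only on the core) and Lemma~\ref{le:Omega-k} (which gives $\npj_G(\Omega^{\pm 1} k) = 1$).

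First I would dispense with the trivial case $p \nmid |G|$: then $kG$ is semisimple, every module is projective, $\Omega M = 0$, and both sides equal $0$ by Lemma~\ref{le:dimM}\,(ii). So assume $p \mid |G|$.

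For the upper bound $\npj_G(\Omega M) \le \npj_G(M)$, I would argue as follows. Since $\core_G(\Omega k \otimes M) \cong \Omega M$, Lemma~\ref{le:core} gives $\npj_G(\Omega M) = \npj_G(\Omega k \otimes M)$. Then Theorem~\ref{th:tensor} combined with Lemma~\ref{le:Omega-k} yields
\[ \npj_G(\Omega k \otimes M) \le \npj_G(\Omega k)\,\npj_G(M) = \npj_G(M). \]

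For the reverse inequality, the idea is to run the same argument one step in the opposite direction using $\Omega^{-1}$. Since $\core_G(M) \cong \Omega^{-1}(\Omega M)$, and $\Omega^{-1}(\Omega M) \cong \core_G(\Omega^{-1} k \otimes \Omega M)$, Lemma~\ref{le:core} gives $\npj_G(M) = \npj_G(\Omega^{-1} k \otimes \Omega M)$, and then Theorem~\ref{th:tensor} together with $\npj_G(\Omega^{-1} k) = 1$ gives
\[ \npj_G(M) \le \npj_G(\Omega^{-1} k)\,\npj_G(\Omega M) = \npj_G(\Omega M). \]

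There is no real obstacle here: the statement is essentially a formal consequence of the two facts that $\Omega k$ and $\Omega^{-1} k$ have invariant $1$ and that $\npj_G$ is submultiplicative with respect to tensor product. The only subtlety is keeping track of the distinction between $\Omega M$ and $\Omega k \otimes M$, which differ only by a projective summand and hence agree after applying $\core_G$, so this is absorbed by Lemma~\ref{le:core}.
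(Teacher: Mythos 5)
Your proof is correct and is essentially identical to the paper's: both directions use $\core_G(\Omega^{\pm 1} k \otimes -)$, Lemma~\ref{le:core}, Theorem~\ref{th:tensor}, and Lemma~\ref{le:Omega-k} in exactly the same way. The only difference is that you explicitly handle the case $p \nmid |G|$, which the paper leaves implicit; this is a harmless extra precaution.
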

\begin{proof}
We have $\core_G(\Omega k \otimes M) \cong\core(\Omega M)= \Omega M$.
So by Lemma \ref{le:core}, Theorem \ref{th:tensor} and Lemma \ref{le:Omega-k} we have
\[ \npj_G(\Omega M) = \npj_G(\Omega k \otimes M) \le  \npj_G(\Omega k)\npj_G(M)=\npj_G(M). \]
The reverse inequality follows in the same way from the fact that
\begin{equation*}
\core_G(\Omega^{-1}k \otimes \Omega M) \cong \core_G(M).
\qedhere
\end{equation*}
\end{proof}

\begin{eg}\label{eg:3x3b}
Let $M$ be the three dimensional module $\Soc^2(kG)$ for
$G=\bZ/3\times\bZ/3=\langle g,h\rangle$ over $\bF_3$, given by the
following matrices, which has the diagram shown:
\[ g \mapsto \begin{pmatrix}1&1&0 \\ 0&1&0 \\ 0&0&1 \end{pmatrix} \qquad
h \mapsto \begin{pmatrix}1&0&1 \\ 0&1&0 \\ 0&0&1 \end{pmatrix} \qquad
\vcenter{\xymatrix@=4mm{\bul \ar@{-}[dr]&&\bul\ar@{-}[dl]\\&\bul}} \]
In this diagram and those in Section~\ref{se:eg}, the vertices
represent basis vectors.  The actions of $g-1$ and $h-1$ 
are represented by the lines going down to the left, 
respectively down to the right from the vertex, or zero if there is no
such line. 
 Then $M$ is non-periodic and
non-algebraic, and
(Craven~\cite{Craven:2007a}, Section 3.3.2)
we have
\[ M \otimes M \cong M^* \oplus \Omega(M^*). \]
Here, $M^*\cong kG/\Rad^2(kG)$ has dimension three,
and $\Omega(M^*)\cong\Soc^3(kG)$ has dimension six.
Using this, and the fact that $M'=M\otimes M^*$ is a non-projective
indecomposable module, it is easy to compute that $\core_G(M^{\otimes n})$ has
$2^{n-2}$ non-projective summands if $n$ is divisible by three,
and $2^{n-1}$ non-projective summands otherwise. So using Theorem
\ref{th:ge-m}
we have $\cc_n^G(M)\ge 2^{n-2}$ and $\npj_G(M)\ge 2$. On the other
hand, using Theorem~\ref{th:tensor-powers},  Theorem~\ref{th:seq-sum},
Lemma~\ref{le:dual} and Theorem~\ref{th:Omega} we have
\[ \npj_G(M)^2 = \npj_G(M\otimes M) = \npj_G(M^*\oplus \Omega(M^*))
\le \npj_G(M^*)+\npj_G(\Omega(M^*)) = 2\npj_G(M) \]
and so $\npj_G(M) \le 2$. Combining these, we have $\npj_G(M)=2$.
Similarly, we have $\npj_G(M')=4$.
\end{eg}

\begin{cor}\label{co:ses}
Let $0\to M_1\to M_2\to M_3\to 0$ be a short exact sequence of
$kG$-modules. Then $\npj_G(M_i)\le \npj_G(M_j)+\npj_G(M_\ell)$,
for any $\{i,j,\ell\}=\{1,2,3\}$.
\end{cor}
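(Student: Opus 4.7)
The case $i=2$ is exactly Theorem~\ref{th:seq-sum}, so the plan is to reduce the remaining two cases $i=1$ and $i=3$ to that case by cycling the short exact sequence using $\Omega$ and $\Omega^{-1}$. The mechanism for this is the standard pullback/pushout construction with a projective cover (respectively, an injective hull).

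For $i=1$, let $\epsilon\colon P\twoheadrightarrow M_3$ be a projective cover, so that $\Ker\epsilon\cong \Omega M_3$ (up to projective summands, which is harmless for $\npj$). Form the pullback $Y$ of $\epsilon$ along the surjection $M_2\twoheadrightarrow M_3$. A routine diagram chase shows that the two projections fit into short exact sequences
\[
0\to \Omega M_3 \to Y \to M_2\to 0, \qquad 0\to M_1 \to Y \to P\to 0.
\]
The second sequence splits because $P$ is projective, so $Y\cong M_1\oplus P$. Substituting into the first, we obtain an exact sequence
\[
0\to \Omega M_3 \to M_1\oplus P \to M_2\to 0.
\]
Applying Theorem~\ref{th:seq-sum} to this sequence gives $\npj_G(M_1\oplus P)\le \npj_G(\Omega M_3)+\npj_G(M_2)$. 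The left-hand side equals $\npj_G(M_1)$ by Lemma~\ref{le:core}, since $\core_G(M_1\oplus P)\cong\core_G(M_1)$, and $\npj_G(\Omega M_3)=\npj_G(M_3)$ by Theorem~\ref{th:Omega}. This yields the desired inequality $\npj_G(M_1)\le \npj_G(M_2)+\npj_G(M_3)$.

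For $i=3$, the argument is dual. Let $M_1\hookrightarrow I$ be an injective hull, with cokernel $\Omega^{-1}M_1$ (using that injective $kG$-modules are projective). Form the pushout $Z$ of the inclusions $M_1\hookrightarrow M_2$ and $M_1\hookrightarrow I$; analogous exact sequences read
\[
0\to M_2\to Z\to \Omega^{-1}M_1\to 0, \qquad 0\to I\to Z\to M_3\to 0.
\]
The second splits because $I$ is injective, so $Z\cong M_3\oplus I$, producing an exact sequence $0\to M_2\to M_3\oplus I\to \Omega^{-1}M_1\to 0$. Theorem~\ref{th:seq-sum} then gives $\npj_G(M_3)=\npj_G(M_3\oplus I)\le \npj_G(M_2)+\npj_G(\Omega^{-1}M_1)$. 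To identify $\npj_G(\Omega^{-1}M_1)$ with $\npj_G(M_1)$ I apply Theorem~\ref{th:Omega} to $\Omega^{-1}M_1$, obtaining $\npj_G(\Omega^{-1}M_1)=\npj_G(\Omega\Omega^{-1}M_1)=\npj_G(\core_G(M_1))=\npj_G(M_1)$, where the last equality uses Lemma~\ref{le:core}.

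There is no serious obstacle: the whole proof rests on the observation that in the stable module category the triangle arising from the short exact sequence can be rotated, and this rotation is realised concretely in the module category by the pullback/pushout with a projective cover/injective hull. The only care required is to absorb the extra projective summand $P$ (or $I$) using Lemma~\ref{le:core}, and to translate between $\Omega^{\pm 1}$ and $\npj_G$ via Theorem~\ref{th:Omega}.
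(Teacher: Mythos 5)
Your proof is correct and follows essentially the same route as the paper: the paper's own argument simply cites the two rotated short exact sequences $0 \to M_2 \to M_3 \oplus \proj \to \Omega^{-1}(M_1)\oplus\proj \to 0$ and $0 \to \Omega(M_3)\oplus\proj \to M_1\oplus\proj \to M_2 \to 0$ and applies Theorem~\ref{th:seq-sum} together with Theorem~\ref{th:Omega}, exactly as you do. Your pullback/pushout constructions are just the explicit justification of those sequences, and your handling of the extra projective summands via Lemma~\ref{le:core} is sound.
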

\begin{proof}
This follows from Theorems~\ref{th:seq-sum}
and~\ref{th:Omega}, together with the observation
that there are short exact sequences
\begin{gather*}
0 \to M_2 \to M_3 \oplus \proj \to\Omega^{-1}(M_1)\oplus \proj \to 0 \\
0 \to \Omega(M_3)\oplus \proj \to M_1 \oplus \proj \to M_2 \to 0.
\qedhere
\end{gather*}
\end{proof}

\begin{defn}
Recall that a $kG$-module $M$ is \emph{endotrivial} if
$M \otimes M^* \cong k \oplus \proj$.
\end{defn}

\begin{theorem}\label{th:sqrt2}
If $M$ is neither projective nor endotrivial then
$\npj_G(M)\ge \sqrt 2$.
\end{theorem}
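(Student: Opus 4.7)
The plan is to establish $\npj_G(M)^2 \ge 2$, splitting the argument according to whether $p$ divides $\dim M$. The common finishing move in both cases will be the inequality $\npj_G(M \otimes M^*) \le \npj_G(M)^2$, which comes from Theorem~\ref{th:tensor} together with Lemma~\ref{le:dual}. (If $p \nmid |G|$ then every $kG$-module is projective, contradicting the hypothesis, so we may assume $p \mid |G|$.)

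If $p \nmid \dim M$, I would use Proposition~\ref{pr:AC}(i) to split off a copy of $k$ from $M \otimes M^*$ and write $\core_G(M \otimes M^*) \cong k \oplus X$, where $X$ has no projective summand. If $X = 0$ then $M \otimes M^* \cong k \oplus \proj$, i.e.\ $M$ is endotrivial, which is excluded; hence $X$ is a nonzero non-projective module with $\npj_G(X) \ge 1$ by Lemma~\ref{le:dimM}. Theorem~\ref{th:k-plus-M} then yields
\[
\npj_G(M \otimes M^*) = \npj_G(k \oplus X) = 1 + \npj_G(X) \ge 2,
\]
and combining with $\npj_G(M \otimes M^*) \le \npj_G(M)^2$ finishes this case.

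If $p \mid \dim M$, I would instead exploit Proposition~\ref{pr:AC}(iii), which exhibits $M \oplus M$ as a direct summand of $M \otimes M^* \otimes M$. Tensoring this containment with $M \otimes M^*$ and reapplying Proposition~\ref{pr:AC}(iii) at each step (the divisibility hypothesis on $\dim M$ persists), a short induction on $n$ shows that $2^n \cdot M$ is a direct summand of
\[
(M \otimes M^*)^{\otimes n} \otimes M \;\cong\; M^{\otimes(n+1)} \otimes (M^*)^{\otimes n}.
\]
Since $M$ is not projective, $\dim \core_G(M) \ge 1$, so taking dimensions of cores and combining the submultiplicativity $\dim \core_G(A \otimes B) \le \dim \core_G(A)\cdot \dim \core_G(B)$ (from the proof of Theorem~\ref{th:tensor}) with $\cc_n^G(M^*) = \cc_n^G(M)$ (Lemma~\ref{le:dual}), one obtains
\[
2^n \;\le\; \cc_{n+1}^G(M) \cdot \cc_n^G(M).
\]
Taking $n$-th roots and letting $n \to \infty$ via Theorem~\ref{th:inf}, both factors on the right tend to $\npj_G(M)$ and we conclude $\npj_G(M)^2 \ge 2$.

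The only genuinely delicate step is the iteration in the second case; the splitting into two cases is forced by the fact that the $k$-summand of Proposition~\ref{pr:AC}(i) is only available when $p \nmid \dim M$, while the ``$M \oplus M$'' summand of Proposition~\ref{pr:AC}(iii) is only available in the complementary case.
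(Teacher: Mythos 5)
Your proof is correct and follows essentially the same route as the paper: the same case split on whether $p$ divides $\dim M$, the same use of Proposition~\ref{pr:AC}\,(i) with Theorem~\ref{th:k-plus-M} in the first case, the same use of Proposition~\ref{pr:AC}\,(iii) in the second, and the same finishing inequality $\npj_G(M\otimes M^*)\le\npj_G(M)^2$. The only (harmless) variation is in the second case, where the paper tensors the doubling once with $M^*$ and invokes $\npj_G((M\otimes M^*)^{\otimes 2})=\npj_G(M\otimes M^*)^2\ge 2\,\npj_G(M\otimes M^*)$ together with Lemma~\ref{le:MM*}, whereas you iterate the doubling $n$ times and bound $\cc_{n+1}^G(M)\,\cc_n^G(M)$ from below directly.
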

\begin{proof}
Suppose that $M$ is neither projective nor endotrivial.
We divide into two cases according to whether the dimension
of $M$ is divisible by $p$.

If the
dimension of $M$ is divisible by $p$, then
by Proposition \ref{pr:AC}, $(M\otimes M^*)^{\otimes 2}$ has a direct
summand isomorphic to a direct
sum of two copies of $M\otimes M^*$.
Thus, $\npj_G(M \otimes M^*)^2 =
\npj_G((M \otimes M^*)^{\otimes 2}) \ge
2  \npj_G (M \otimes M^*)$. As $M$ is not projective,
neither is $M \otimes M^*$ by Lemma~\ref{le:MM*} and we
have $\npj_G (M \otimes M^*) \ne 0$, so  $\npj_G(M\otimes M^*)\ge 2$.

On the other hand, if the dimension of $M$ is not divisible by $p$
 and $M$ is not endotrivial then, by Proposition~\ref{pr:AC}, $M \otimes M^*
\cong k\oplus X$ with $X$ non-projective. Then
$\npj_G(X) \ge 1$, so $\npj_G( M \otimes M^*) \ge 2$, by Corollary~\ref{co:k-plus-M}.

In both cases,
using Theorem~\ref{th:tensor} and Lemma~\ref{le:dual}, we have
\[ 2 \le \npj_G(M\otimes M^*) \le \npj_G(M)\npj_G(M^*) =  \npj_G(M)^2 \]
and so $\npj_G(M)\ge \sqrt 2$.
\end{proof}

\begin{rk}\label{rk:sqrt2}
Example \ref{eg:SL24} shows that equality can occur in
Theorem \ref{th:sqrt2}. See Section~\ref{se:sqrt2} for more
about what happens when $\npj_G(M)\ge \sqrt 2$.
\end{rk}

We can deduce a theorem of Carlson on finite dimensional idempotent
$kG$-modules (see Theorem~3.5 of \cite{Carlson:1996a}) as a corollary.

\begin{cor}
If $M\otimes M\cong M\oplus \proj$ and $M$ is not projective then $M \cong
k\oplus\proj$.
\end{cor}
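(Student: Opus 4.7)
The plan is to use Theorem~\ref{th:tensor-powers} to extract numerical information from the hypothesis $M\otimes M\cong M\oplus \proj$, then invoke Theorem~\ref{th:sqrt2} to force $M$ to be endotrivial, and finally apply Krull--Schmidt after tensoring with $M^*$.

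First, since tensoring with a projective module gives a projective module, Lemma~\ref{le:core} implies $\npj_G(M\oplus\proj)=\npj_G(M)$. Applying $\npj_G$ to the given isomorphism and using Theorem~\ref{th:tensor-powers}, one obtains
\[ \npj_G(M)^2 = \npj_G(M^{\otimes 2}) = \npj_G(M\oplus\proj) = \npj_G(M), \]
so $\npj_G(M)\in\{0,1\}$. We may assume $p\mid |G|$, since otherwise every module is projective and the hypothesis is vacuous. By Lemma~\ref{le:dimM}\,(ii), $\npj_G(M)\ne 0$, hence $\npj_G(M)=1$.

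Next, since $1<\sqrt 2$, Theorem~\ref{th:sqrt2} forces $M$ to be endotrivial, that is, $M\otimes M^*\cong k\oplus\proj$. Tensoring the hypothesis $M\otimes M\cong M\oplus \proj$ with $M^*$ and rearranging gives
\[ M\otimes(M\otimes M^*)\cong (M\otimes M^*)\oplus\proj, \]
and substituting $M\otimes M^*\cong k\oplus\proj$ on both sides yields $M\oplus\proj\cong k\oplus\proj$. By the Krull--Schmidt theorem, this forces $\core_G(M)\cong\core_G(k)=k$ (recall $k$ is non-projective because $p\mid |G|$), and therefore $M\cong k\oplus\proj$.

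The only subtle point is the application of Theorem~\ref{th:sqrt2}, which is what transforms the purely numerical conclusion $\npj_G(M)=1$ into the structural statement that $M$ is endotrivial; everything else is bookkeeping with Krull--Schmidt and the behaviour of cores under tensor products. I expect no real obstacle once Theorem~\ref{th:sqrt2} is available.
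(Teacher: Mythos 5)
Your proof is correct and follows the paper's own route: derive $\npj_G(M)=1$ from the idempotent relation, invoke Theorem~\ref{th:sqrt2} to conclude that $M$ is endotrivial, and then identify $M$ with $k$ modulo projectives. The only cosmetic difference is that where the paper cites the group structure of endotrivial modules to rule out nontrivial idempotents, you unpack that argument by hand, tensoring with $M^*$ and applying Krull--Schmidt, which is exactly how that group-theoretic fact is proved.
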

\begin{proof}
The hypothesis implies that $\npj_G(M)=1$, so by Theorem~\ref{th:sqrt2},
$M$ is endotrivial. The endotrivial modules (modulo projective summands)
form a group under tensor
product, so the only idempotent element is the identity.
\end{proof}

\section{Faithful Modules}

\begin{defn}\label{def:p-faithful}
We say that a $kG$-module $M$ is \emph{$p$-faithful} if it is not 0 and
no element of order $p$ in $G$ acts trivially on $M$. So faithful
implies $p$-faithful, and $p$-faithful is equivalent to being
faithful on restriction to a Sylow $p$-subgroup of $G$.
\end{defn}

\begin{lemma}\label{le:p-faithful}
Let $M$ be a $kG$-module. Then
some tensor power $M^{\otimes n}$ with $n \ge 1$ has a non-zero projective summand
if and only if $M$ is $p$-faithful.
\end{lemma}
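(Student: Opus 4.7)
\emph{Necessity.} Suppose that some $g\in G$ of order $p$ acts trivially on $M$. Under the diagonal action, $g$ then acts trivially on every tensor power $M^{\otimes n}$. On the other hand, any nonzero projective $kG$-module restricts to a nonzero free $k\langle g\rangle$-module, and on such a module $g$ does not act as the identity (e.g.\ on the free generator, $g\cdot 1=g\ne 1$). Hence no $M^{\otimes n}$ can contain a nonzero projective summand.

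\emph{Sufficiency.} Now suppose $M$ is $p$-faithful. My plan proceeds in two stages: first prove the analogous statement for $p$-groups, then transfer the conclusion from a Sylow $p$-subgroup back to $G$.

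For the $p$-group stage, I would show that if $P$ is a $p$-group and $N$ is a faithful $kP$-module, then some $N^{\otimes n}$ has $kP$ as a direct summand — equivalently, the norm element $N_P:=\sum_{h\in P}h$ acts non-trivially on $N^{\otimes n}$ (since $kP$ has simple socle $k\cdot N_P$ and is self-injective). I induct on $|P|$. The base case $|P|=p$ uses the classification of indecomposable $k[\bZ/p]$-modules as $J_1,\dots,J_p$ with $J_p\cong kP$: faithfulness forces some $J_i$ summand of $N$ with $i\ge 2$, and the Clebsch--Gordan tensor product rule shows that the maximal Jordan block size of $N^{\otimes n}$ grows with $n$ until it reaches $p$, at which point $kP$ appears as a summand. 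For $|P|>p$, pick a central subgroup $Z\le Z(P)$ of order $p$; faithfulness forces $N\downarrow_Z$ to be faithful, so by the base case some $N^{\otimes m}$ has $N_Z\cdot N^{\otimes m}\ne 0$. Since $N_Z$ is central in $kP$ and $Z$ acts trivially on its image, this subspace is a $k(P/Z)$-submodule of $(N^{\otimes m})^Z$; invoking the inductive hypothesis on a faithful $k(P/Z)$-quotient (after replacing $m$ by a multiple if necessary to accumulate enough faithfulness) completes the step.

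For the transfer stage, let $P$ be a Sylow $p$-subgroup of $G$; then $M\downarrow_P$ is faithful, since every non-identity element of $P$ has a power of order $p$ which by hypothesis acts non-trivially on $M$. The $p$-group case gives $n$ with $(M^{\otimes n})\downarrow_P$ containing a free $kP$-summand. To promote this to a projective $kG$-summand of $M^{\otimes n}$, I use that $[G:P]$ is invertible in $k$: $M^{\otimes n}$ is a $kG$-direct summand of $((M^{\otimes n})\downarrow_P)\uparrow^G$, and a normalized transfer/averaging of the $kP$-idempotent projecting onto the free summand produces an idempotent in $\End_{kG}(M^{\otimes n})$ whose image is a nonzero projective $kG$-summand. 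The hard part will be the inductive step of the $p$-group stage — namely, guaranteeing that the $k(P/Z)$-module $N_Z\cdot N^{\otimes m}$ carries enough faithfulness for the induction to engage, which is likely to require an explicit analysis of how $P/Z$ acts and possibly a further tensor-power step.
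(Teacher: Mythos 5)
Your necessity argument is the same as the paper's and is fine. The sufficiency direction, however, has two genuine gaps. First, the transfer from the Sylow subgroup back to $G$ does not work as described: averaging a $kP$-idempotent $e$ over coset representatives gives the relative trace $\mathrm{tr}_P^G(e)\in\End_{kG}(M^{\otimes n})$, which is in general not an idempotent (the conjugates of $e$ need not commute), and, more seriously, the fact you are trying to manufacture this way is false: a $kG$-module whose restriction to a Sylow $p$-subgroup has a nonzero free summand need not have any projective $kG$-summand. For instance, for $G=\mathrm{SL}(2,p)$ and a nonprojective simple module $L$, the module $\Omega(L)$ is indecomposable and nonprojective, yet its restriction to the Sylow $p$-subgroup is a single nonprojective Jordan block plus a free module. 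So knowing that $(M^{\otimes n}){\downarrow_P}$ has a free summand does not by itself yield a projective $kG$-summand of any tensor power; some extra input is needed (for example Carlson's Theorem~\ref{th:Carlson}, bounding $\dim\core_G$ by the cores on elementary abelian subgroups, could be used here, since $p$-faithfulness of $M$ gives faithfulness of every elementary abelian subgroup on $M$).

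Second, the $p$-group induction is precisely where the real difficulty sits, and it is left unproved. Faithfulness of $P/Z$ on $N_Z\cdot N^{\otimes m}$ does not follow from faithfulness of $P$ on $N$, and ``replace $m$ by a multiple to accumulate enough faithfulness'' is not yet an argument; moreover the norm element is not grouplike for the diagonal action (the coproduct sends $N_Z$ to $\sum_{z\in Z}z\otimes z$, not to $N_Z\otimes N_Z$), so tensor powers of the $k(P/Z)$-module $N_Z\cdot N^{\otimes m}$ are not naturally summands of modules of the form $N_Z\cdot N^{\otimes m'}$, and you would additionally need (but have not stated) a criterion that recovers a free $kP$-summand of $N^{\otimes m'}$ from $kZ$-freeness together with the $k(P/Z)$-structure of the image of $N_Z$. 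The paper avoids all of this: it notes that for a $p$-faithful $M$ the kernel of the action is a normal $p'$-subgroup $H$, that projective $k[G/H]$-modules are projective as $kG$-modules (so one may assume $M$ faithful), and then invokes the Corollary to Theorem~1 of Bryant and Kov\'acs, which states exactly that a faithful module has a tensor power with a nonzero projective summand. To make your route work you would have to both repair the Sylow transfer and actually carry out the $p$-group induction, which amounts to reproving Bryant--Kov\'acs; citing it, as the paper does, is the short path.
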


\begin{proof}
The lemma is clearly true if $M=0$, so assume that $M \ne 0$.
If $M$ is not $p$-faithful, then there is an element $g\in G$
of order $p$ acting trivially on $M$. It therefore acts trivially on
$M^{\otimes n}$, so this module has no projective summands.

On the other hand, if $M$ is $p$-faithful then the kernel of the
action on $M$ is a normal $p'$-subgroup $H\le G$. Projective
$kG/H$-modules are projective $kG$-modules, so we may assume that
$H=1$. This case follows from the Corollary to Theorem~1 of 
Bryant and Kov\'acs \cite{Bryant/Kovacs:1972a}.
\end{proof}

\begin{theorem}\label{th:p-faithful}
We have $\npj_G(M) < \dim M$ if and only if $M$ is $p$-faithful.
\end{theorem}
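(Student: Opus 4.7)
The plan is to use Lemma~\ref{le:p-faithful} as the bridge. That lemma tells us, for any $M$, that $M$ is $p$-faithful if and only if some power $M^{\otimes n}$ with $n\ge 1$ has a nonzero projective summand. Equivalently (since a nonzero projective summand strictly reduces the dimension when we pass to the core), $M$ is $p$-faithful if and only if $\cc_n^G(M)<(\dim M)^n$ for some $n\ge 1$, whereas $M$ is not $p$-faithful if and only if $\cc_n^G(M)=(\dim M)^n$ for every $n$.

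For the direction that $M$ not $p$-faithful implies $\npj_G(M)=\dim M$, the equivalence above gives $\cc_n^G(M)=(\dim M)^n$ for all $n\ge 1$, so $\sqrt[n]{\cc_n^G(M)}=\dim M$ for all $n$ and hence $\npj_G(M)=\dim M$ on taking the $\limsup$.

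For the converse, suppose $M$ is $p$-faithful, and pick an $n\ge 1$ for which $\cc_n^G(M)<(\dim M)^n$ is strict. I would then convert this single strict inequality into a statement about $\npj_G(M)$ by invoking Theorem~\ref{th:tensor-powers} together with Lemma~\ref{le:core} and Lemma~\ref{le:dimM}(i):
\[
\npj_G(M)^n = \npj_G(M^{\otimes n}) = \npj_G(\core_G(M^{\otimes n})) \le \dim\core_G(M^{\otimes n}) = \cc_n^G(M) < (\dim M)^n.
\]
Taking $n$th roots then yields $\npj_G(M)<\dim M$, as required.

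The only step that does real work is the translation of $p$-faithfulness into the existence of a power with a projective summand, and that has already been packaged into Lemma~\ref{le:p-faithful} (whose nontrivial direction rests on the Bryant--Kov\'acs theorem). Given that lemma, everything else is essentially a one-line combination of Theorem~\ref{th:tensor-powers} with the obvious bound $\npj_G(N)\le\dim N$; so I do not expect any genuine obstacle in writing up the proof.
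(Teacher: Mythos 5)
Your proof is correct and follows essentially the same route as the paper: both directions hinge on Lemma~\ref{le:p-faithful}, with the non-faithful case giving $\cc_n^G(M)=(\dim M)^n$ for all $n$, and the faithful case combining Theorem~\ref{th:tensor-powers} with the bound $\npj_G\le\dim$ applied to a single tensor power having a nonzero projective summand. The only cosmetic difference is that you bound $\npj_G(M^{\otimes n})$ by $\dim\core_G(M^{\otimes n})$ via Lemma~\ref{le:core}, whereas the paper bounds it by the dimension of the complement of the projective summand; these are the same estimate.
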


\begin{proof}
Again, we may assume that $M \ne 0$. We use Lemma \ref{le:p-faithful}.
If $M$ is not $p$-faithful then for all $n$ we have
$\core_G(M^{\otimes n})=M^{\otimes n}$ and so $\npj_G(M)=\dim M$.
Conversely, if $M$ is $p$-faithful, then some tensor power
has a projective summand, say $M^{\otimes m} = P \oplus N$
with $P$ a non-zero projective module. Thus using Theorem \ref{th:tensor-powers} we have
\[ \npj_G(M)^m=\npj_G(M^{\otimes m})\le \dim N < (\dim M)^m \]
and so $\npj_G(M)< \dim M$.
\end{proof}

\section{Restriction to Elementary Abelian Subgroups}\label{se:elemab}

\begin{theorem}\label{th:Carlson}
There exists a constant $B$, which depends only on $p$ and $G$,
such that if $M$ is a $kG$-module then
\[ \dim \core_G(M) \le B  \max_{E\le G}\dim\core_E(M) \]
where the maximum is taken over the set of elementary
abelian $p$-subgroups $E$ of $G$.
\end{theorem}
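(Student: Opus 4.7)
The plan is to reduce to indecomposable modules via a pigeonhole argument over the finitely many conjugacy classes of elementary abelian $p$-subgroups of $G$, and then prove a quantitative form of Chouinard's theorem for each indecomposable summand.

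First I would reduce to the case where $M$ has no projective summand: since $P\downarrow_E$ is projective for any projective $kG$-module $P$, we have $\core_E(M\downarrow_E)\cong\core_E(\core_G(M)\downarrow_E)$, so I may replace $M$ by $\core_G(M)$. By Krull--Schmidt I then write $M = N_1\oplus\cdots\oplus N_s$ with each $N_i$ indecomposable and non-projective.

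The hardest part is the following quantitative refinement of Chouinard's theorem: there exists a constant $B_0 = B_0(G,p)$ such that every indecomposable non-projective $kG$-module $N$ admits an elementary abelian $p$-subgroup $E\le G$ with $\dim\core_E(N\downarrow_E)\ge\dim N/B_0$. My approach would be via the Green correspondence: fix a vertex $Q$ of $N$ with source $S$, so that $N$ is a summand of $S\uparrow_Q^G$, giving $\dim N \le [G:Q]\dim S$ and hence $\dim S \ge \dim N/|G|$. This reduces the problem to the analogous statement for the $p$-group $Q$ and the indecomposable $kQ$-module $S$ of vertex $Q$. The base case where $Q$ is itself elementary abelian is trivial (take $E = Q$, so that $\core_E(S\downarrow_E) = S$); for the inductive step on $|Q|$, one would restrict $S$ to maximal proper subgroups $Q_0 < Q$ and apply the inductive hypothesis, using the fact that not all summands of $S\downarrow_{Q_0}$ can be projective (else $S$ itself would be projective by Chouinard applied inside $Q$). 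This produces a bound $B_1(Q)$ depending only on $Q$, and combining yields $B_0 = |G|\cdot\max_Q B_1(Q)$.

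Granted the main step, the rest is pigeonhole. Let $\mathcal E$ be a set of conjugacy class representatives of elementary abelian $p$-subgroups of $G$; it is finite. Assign to each $i$ an $E(i) \in \mathcal E$ satisfying the main step for $N_i$. Some $E^* \in \mathcal E$ must be assigned summands of total dimension at least $\dim M/|\mathcal E|$, and then
\[
\dim\core_{E^*}(M\downarrow_{E^*}) \ge \sum_{i :\, E(i)=E^*} \dim\core_{E^*}(N_i\downarrow_{E^*}) \ge \frac{1}{B_0}\sum_{i :\, E(i)=E^*} \dim N_i \ge \frac{\dim M}{B_0|\mathcal E|},
\]
so $B = B_0|\mathcal E|$ suffices. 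The main obstacle is the induction inside the main step: one must control how much of $S\downarrow_E$ becomes projective when passing from a $p$-group $Q$ to an elementary abelian subgroup $E \le Q$, which genuinely requires a quantitative statement beyond the qualitative Chouinard's theorem.
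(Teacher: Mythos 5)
The paper does not prove this statement at all: it is quoted from Theorem~3.7 of Carlson's 1981 paper \emph{Dimensions of modules and their restrictions over modular group algebras}, so there is no internal argument to compare against. Judged on its own terms, your proposal contains a genuine gap, and it is exactly the one you flag in your last sentence. The outer reductions are fine: passing to $\core_G(M)$, splitting into indecomposables, the Green correspondence step $\dim S\ge \dim N/|G|$, and the pigeonhole over the finitely many conjugacy classes of elementary abelian subgroups are all correct and routine. But they only reduce the theorem to the statement ``for an indecomposable non-projective module over a $p$-group $Q$, some elementary abelian $E\le Q$ sees a definite fraction of the dimension,'' which is equivalent to the theorem itself (it is the special case $M$ indecomposable, and conversely). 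So all of the content sits in your ``main step,'' and the induction you sketch for it does not close.

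Concretely: in the inductive step the hypothesis applied to the non-projective indecomposable summands of $S{\downarrow_{Q_0}}$ yields an elementary abelian $E\le Q_0$ with $\dim\core_E(S)\ge \dim\core_{Q_0}(S{\downarrow_{Q_0}})/C$, i.e.\ a bound proportional to the dimension of the \emph{non-projective part of the restriction}, not to $\dim S$. Qualitative Chouinard only guarantees $\dim\core_{Q_0}(S{\downarrow_{Q_0}})\ge 1$ for some maximal $Q_0$; it gives no lower bound of the form $\dim S/C$, and restriction to a maximal subgroup really can make all but a small piece of an indecomposable projective (already for $Q=\bZ/p^2$ the Jordan block of size $p^2-1$ restricts to $\bZ/p$ with core of dimension only $p-1$). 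So the missing lemma is precisely ``$\dim\core_{Q_0}(S{\downarrow_{Q_0}})\ge\dim S/C(Q)$ for some maximal $Q_0<Q$ when $Q$ is not elementary abelian,'' which is the same kind of quantitative descent you are trying to prove and cannot be extracted from the qualitative theorem. This is where Carlson's actual proof does real work: he uses Serre's theorem that for a non-elementary-abelian $p$-group a product of Bocksteins $\beta(x_1)\cdots\beta(x_m)$ of degree-one classes vanishes, and the associated exact sequences (via the modules $L_\zeta$) to bound $\dim\core_Q(S)$ by a constant times the sum of $\dim\core_{Q_i}(S)$ over the corresponding maximal subgroups $Q_i=\ker x_i$; that cohomological input is what your induction is missing.
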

\begin{proof}
See Theorem~3.7 of Carlson~\cite{Carlson:1981c}.
\end{proof}

\begin{theorem}\label{th:E}
Let $M$ be a $kG$-module. Then
$\npj_G(M) = \displaystyle\max_{E\le G} \npj_E(M)$.
\end{theorem}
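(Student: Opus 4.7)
The plan is to combine the monotonicity Lemma~\ref{le:subgroup} with Carlson's inequality Theorem~\ref{th:Carlson} applied to tensor powers of $M$, and then extract $n$th roots.

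First I would dispose of the easy direction. By Lemma~\ref{le:subgroup}, for every elementary abelian $p$-subgroup $E\le G$ we have $\npj_E(M)\le \npj_G(M)$, so
\[ \max_{E\le G}\npj_E(M)\le \npj_G(M). \]

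For the reverse inequality, I would apply Theorem~\ref{th:Carlson} to the $kG$-module $M^{\otimes n}$ (for each $n\ge 1$). The constant $B$ there depends only on $p$ and $G$, not on the module, so
\[ \cc_n^G(M)=\dim\core_G(M^{\otimes n})\le B\max_{E\le G}\dim\core_E(M^{\otimes n})=B\max_{E\le G}\cc_n^E(M). \]
Taking $n$th roots gives
\[ \sqrt[n]{\cc_n^G(M)}\le \sqrt[n]{B}\cdot\max_{E\le G}\sqrt[n]{\cc_n^E(M)}. \]
Now I would take $\limsup_{n\to\infty}$ of both sides. The factor $\sqrt[n]{B}$ tends to $1$. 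The collection of elementary abelian $p$-subgroups of $G$ is finite, so the $\limsup$ of the maximum equals the maximum of the $\limsup$s (this is the standard fact that for finitely many sequences of non-negative reals, $\limsup\max_i a_n^{(i)}=\max_i\limsup a_n^{(i)}$, proved by pigeonhole along any subsequence realising the $\limsup$). Therefore
\[ \npj_G(M)=\limsup_{n\to\infty}\sqrt[n]{\cc_n^G(M)}\le\max_{E\le G}\limsup_{n\to\infty}\sqrt[n]{\cc_n^E(M)}=\max_{E\le G}\npj_E(M). \]

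There is really no obstacle here once Theorem~\ref{th:Carlson} is in hand: all the input is Carlson's theorem, and the rest is extraction of $n$th roots and the interchange of $\max$ and $\limsup$ for finite index sets. The whole argument is formally identical to the proof that complexity is detected on elementary abelian $p$-subgroups, and the polynomial-versus-exponential distinction is irrelevant because $\sqrt[n]{B}\to 1$ regardless.
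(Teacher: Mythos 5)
Your argument is correct and is essentially the same as the paper's proof: both directions come from Lemma~\ref{le:subgroup} and Theorem~\ref{th:Carlson} applied to $M^{\otimes n}$, followed by taking $n$th roots and letting $\sqrt[n]{B}\to 1$. Your explicit justification of exchanging $\max$ and $\limsup$ over the finitely many elementary abelian subgroups is a detail the paper leaves implicit, but the route is identical.
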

\begin{proof}
By Theorem \ref{th:Carlson} and Lemma \ref{le:subgroup} we have
\begin{equation*}
\max_{E\le G}\sqrt[n]{\cc_n^E(M)}\le
\sqrt[n]{\cc_n^G(M)} \le
\sqrt[n]{B}\,\max_{E\le G}\sqrt[n]{\cc_n^E(M)} .
\end{equation*}
Taking $\displaystyle\limsup_{n\to\infty}$, the factor of $\sqrt[n]{B}$ tends to $1$.
\end{proof}

\begin{eg}
Let $G$ be a generalised quaternion group and let $k$ be a field of
characteristic two. Then $G$ has only one elementary abelian $2$-subgroup
$E=\langle z\rangle$, where $z$ is the central element of order two.
Let $X=1+z$, an element of $kG$ satisfying $X^2=0$.
If $M$ is a $kG$-module then the restriction to $kE$ is a direct sum
of $\dim(\Ker(X,M)/\im(X,M))$ copies of the trivial module plus a free module.
It follows that
\[ \npj_G(M)=\dim(\Ker(X,M)/\im(X,M)). \]
In particular, this is an integer.
\end{eg}

\begin{prop}[Dade \cite{Dade:1978a,Dade:1978b}]\label{pr:Dade}
If $E$ is an elementary abelian $p$-group, then the only
indecomposable endo\-trivial $kE$-modules
are the syzygies $\Omega^n(k)$ ($n\in\bZ$) of the trivial module.\qed
\end{prop}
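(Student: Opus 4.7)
The plan is to induct on the rank $r$ of $E$: a direct dimension count for $r=1$, and restriction to cyclic shifted subalgebras for $r\ge 2$, followed by a detection argument.

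For the base case $r=1$ with $E=\bZ/p$, the indecomposable $kE$-modules are $V_1,\dots,V_p$ with $\dim V_j=j$; each is self-dual, and $V_p$ is projective. If $V_j$ with $j<p$ is endotrivial, then $V_j\otimes V_j\cong k\oplus\proj$ has dimension $\equiv 1\pmod{p}$, so $j^2\equiv 1\pmod{p}$ and hence $j\in\{1,p-1\}$. These are $V_1=k=\Omega^0 k$ and $V_{p-1}=\Omega k$, and since $\Omega$ has period dividing $2$ on $\bZ/p$, the set $\{\Omega^n k:n\in\bZ\}$ accounts for all indecomposable endotrivials.

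For the inductive step $r\ge 2$, I would fix generators $x_1,\dots,x_r$ of $E$. For each nonzero $\alpha\in k^r$, the element $u_\alpha=1+\sum_i\alpha_i(x_i-1)\in kE$ satisfies $u_\alpha^p=1$, and $k\langle u_\alpha\rangle\subset kE$ is a subalgebra isomorphic to $k[\bZ/p]$ over which $kE$ is free. Restriction therefore preserves endotriviality, so for any indecomposable endotrivial $M$ the base case gives $M{\downarrow_{u_\alpha}}\cong\Omega^{\ep(\alpha)}k\oplus\proj$ with $\ep(\alpha)\in\{0,1\}$. When $p$ is odd the value $\ep(\alpha)$ is forced by $\dim M\bmod p$ (which must be $\pm 1$, since $(\dim M)^2\equiv 1\pmod{p}$) and is therefore a constant $\ep_0$; when $p=2$ one sets $\ep_0=0$, although the argument requires more care in that case since restriction to rank-one shifted subalgebras cannot distinguish $k$ from $\Omega k$. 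Tensoring $M$ with $\Omega^{-\ep_0}k$, I may then assume every $M{\downarrow_{u_\alpha}}\cong k\oplus\proj$.

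The main obstacle is the final step: promoting this pointwise information to the global conclusion $M\cong k\oplus\proj$. This is the substance of Dade's theorem and would be handled via his projectivity-detection lemma in a nontrivial way. The idea is that the functor sending an endotrivial module to its tuple of restrictions on cyclic shifted subalgebras is injective on the Picard group of the stable module category $\mathsf{stmod}(kE)$, so the trivial restriction pattern forces $[M]=[k]$ stably, giving $M\cong k\oplus\proj$. Combined with the initial twist, $M\cong\Omega^{\ep_0}k$, and the induction closes; the $p=2$ case uses an analogous descent argument over higher-order elements of the augmentation ideal.
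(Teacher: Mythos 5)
The paper does not prove this proposition at all: it is stated as a citation to Dade's two Annals papers with no argument given, so the benchmark is Dade's own proof. Your base case for $\bZ/p$ is correct and complete, and your scaffolding for the inductive step --- the shifted units $u_\alpha=1+\sum_i\alpha_i(x_i-1)$, the freeness of $kE$ over $k\langle u_\alpha\rangle$, the constancy of $\ep(\alpha)$ forced by $\dim M\bmod p$ for odd $p$, and the normalisation by tensoring with $\Omega^{-\ep_0}k$ --- does match the broad architecture of Dade's argument.

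But the final step is a genuine gap, and you have in effect said so yourself. The claim that restriction to the cyclic shifted subalgebras is injective on the group of endotrivial modules (equivalently, that an endotrivial $M$ with $M{\downarrow_{u_\alpha}}\cong k\oplus\proj$ for every $\alpha$ must satisfy $M\cong k\oplus\proj$) is not a consequence of Dade's projectivity-detection lemma: that lemma detects whether a module is \emph{projective}, not whether a stably invertible module is stably \emph{trivial}, and the passage from the first statement to the second is precisely where all the difficulty in Dade's papers lies. Dade establishes it by a delicate induction beginning with a hands-on analysis of the rank-two case (in modern language, the computation that the group of endotrivial modules is infinite cyclic, generated by $\Omega$, once the rank is at least two), and no soft ``descent'' or ``detection'' principle substitutes for that analysis. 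As written, your argument reduces the theorem to an equivalent unproved assertion, so it is not a proof; the $p=2$ branch, deferred to ``an analogous descent argument,'' has the same gap compounded by the fact that rank-one restrictions carry no information whatsoever in that case, since $\Omega k\cong k$ for $\bZ/2$.
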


\begin{theorem}\label{th:endotriv}
If $p$ divides $|G|$ then a $kG$-module $M$ is endotrivial if and only if $\npj_G(M)=1$.
\end{theorem}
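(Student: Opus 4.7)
The plan is to prove the two implications separately, exploiting the fact that much of the needed machinery is already in place.

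For the implication $\npj_G(M) = 1 \Rightarrow M$ endotrivial, I would observe that this is essentially the contrapositive of Theorem~\ref{th:sqrt2}. If $\npj_G(M) = 1$ then $M$ is not projective by Lemma~\ref{le:dimM}(ii), and since $1 < \sqrt{2}$, Theorem~\ref{th:sqrt2} forces $M$ to be endotrivial. This direction is therefore immediate.

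For the converse, suppose $M$ is endotrivial, so $M \otimes M^* \cong k \oplus \proj$. Since $p \mid |G|$, the trivial module $k$ is not projective, so $M \otimes M^*$ is not projective, and hence $M$ is not projective either (e.g.\ by Lemma~\ref{le:MM*}). By Lemma~\ref{le:dimM}(iii) this gives the lower bound $\npj_G(M) \ge 1$. For the reverse inequality I would use Theorem~\ref{th:E} to reduce to showing $\npj_E(M) \le 1$ for every elementary abelian $p$-subgroup $E \le G$. For $E = 1$ every module is projective, so $\npj_E(M) = 0$. For $E \ne 1$, restricting the isomorphism $M \otimes M^* \cong k \oplus \proj$ to $E$ (noting that projective $kG$-modules remain projective on restriction) shows that $M\downarrow_E$ is endotrivial. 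By Proposition~\ref{pr:Dade} every indecomposable non-projective summand of $\core_E(M\downarrow_E)$ has the form $\Omega^{n_i}(k)$, and a direct computation of $\core_E((M\downarrow_E) \otimes (M\downarrow_E)^*)$ shows that having two or more such summands, or one with multiplicity greater than one, would contribute strictly more than one copy of $k$ to the non-projective part, contradicting endotriviality. Hence $\core_E(M\downarrow_E) \cong \Omega^n(k)$ for a single $n \in \bZ$. Applying Lemma~\ref{le:core}, then Theorem~\ref{th:Omega} iteratively $|n|$ times, and finally Lemma~\ref{le:dimM}(iv) (which applies because $|E|$ is divisible by $p$), one obtains
\[
\npj_E(M) \;=\; \npj_E(\Omega^n k) \;=\; \npj_E(k) \;=\; 1.
\]
Combining over all $E$ via Theorem~\ref{th:E} yields $\npj_G(M) \le 1$, completing the proof.

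The main obstacle is the bookkeeping step showing that $\core_E(M\downarrow_E)$ is a single copy of some $\Omega^n(k)$ rather than a larger direct sum. This follows from a short counting argument using Dade's classification, but it is the one point where the proof is not purely formal manipulation of the inequalities already established.
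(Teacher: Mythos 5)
Your proof is correct and follows essentially the same route as the paper: one direction is the contrapositive of Theorem~\ref{th:sqrt2}, and the other reduces via Theorem~\ref{th:E} to elementary abelian subgroups, invokes Dade's classification (Proposition~\ref{pr:Dade}), and finishes with Theorem~\ref{th:Omega}. The only cosmetic difference is that you reprove, via your counting argument on $\core_E(M\otimes M^*)$, the standard fact that an endotrivial module is an indecomposable endotrivial module plus a projective, which the paper simply quotes.
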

\begin{proof}
If $M$ is projective then $\npj_G(M)=0$. If $M$ is neither projective
nor endotrivial then by Theorem \ref{th:sqrt2} we have $\npj_G(M)\ge \sqrt{2}$.

Conversely if $M$ is endotrivial then its restriction to
every elementary abelian $p$-subgroup of $G$ is endotrivial.
So by Theorem \ref{th:E}, we may assume that $G=E$ is
an elementary abelian $p$-group. Since an endotrivial
module is a direct sum of an indecomposable endotrivial
module and a projective module, we may assume that
$M$ is indecomposable.
By Proposition~\ref{pr:Dade},
$M$ is a syzygy of the trivial module, so
by Theorem \ref{th:Omega} we have $\npj_E(M)=1$.
\end{proof}

\begin{warning}
If $E$ is an elementary abelian $p$-group and $M$ is a $kE$-module then
$\npj_E(M)$ does depend on the Hopf algebra structure of $kE$. If we
regard $kE$ as the universal enveloping algebra of a restricted Lie algebra with
trivial bracket and trivial $p$th power map, and we use the corresponding
comultiplication, then $\npj_E(M)$ may change. For example, restrict
the module of Example~\ref{eg:SL24} to a Sylow $2$-subgroup, which is
elementary abelian of order four. Then by Theorem~\ref{th:E},
we have $\npj_E(M)=\sqrt{2}$. But if we
use the Lie comultiplication then $M\otimes M \cong M \oplus M$,
and so $\npj_E(M)=2$.
\end{warning}

\section{Radius of Convergence}\label{se:gamma}

Another way of studying the invariant $\npj_G(M)$ is to consider power series;
we begin with a well known lemma from analysis.

\begin{lemma}[Cauchy, Hadamard]\label{le:radius}
Let $\phi\colon \bZ_{\ge 0} \to \bC$. Then the radius of
convergence $r$ of the power series
\[ f(t)=\sum_{n=0}^\infty \phi(n)t^n \]
is given by
\[ 1/r = \limsup_{n \to \infty} \sqrt[n]{|\phi(n)|}. \]
For $|t|<r$, the convergence is uniform and absolute.
\end{lemma}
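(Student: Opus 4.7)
This is the classical Cauchy--Hadamard theorem, so the plan is standard. Let $L=\limsup_{n\to\infty}\sqrt[n]{|\phi(n)|}\in[0,\infty]$, with the convention $1/0=\infty$ and $1/\infty=0$. I would prove two things: (a) absolute and uniform convergence on every closed disc $|t|\le\rho<1/L$, and (b) divergence whenever $|t|>1/L$. Together these identify $r=1/L$.

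For (a), fix $\rho$ with $0\le\rho<1/L$, and choose any $\sigma$ with $\rho<\sigma<1/L$, so that $1/\sigma>L$. By the definition of $\limsup$, there is an $N$ such that $\sqrt[n]{|\phi(n)|}<1/\sigma$ for all $n\ge N$, i.e.\ $|\phi(n)|<\sigma^{-n}$. Then for $|t|\le\rho$ and $n\ge N$,
\[
|\phi(n)t^n|\le(\rho/\sigma)^n,
\]
and since $\rho/\sigma<1$ the Weierstrass $M$-test with $M_n=(\rho/\sigma)^n$ delivers absolute and uniform convergence on the closed disc $|t|\le\rho$. Letting $\rho\uparrow 1/L$ shows the series converges for every $|t|<1/L$.

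For (b), suppose $|t|>1/L$. Then $1/|t|<L$, so by the definition of $\limsup$ there are infinitely many indices $n$ with $\sqrt[n]{|\phi(n)|}>1/|t|$, i.e.\ $|\phi(n)t^n|>1$. The terms therefore do not tend to zero and the series diverges. Combining (a) and (b) gives $r=1/L$.

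No step is genuinely an obstacle here; the only mild subtlety is book-keeping the boundary cases $L=0$ (where the series converges for all $t\in\bC$ and $r=\infty$) and $L=\infty$ (where the series diverges for every $t\ne 0$ and $r=0$), both of which are handled uniformly by the $\limsup$ definition and the conventions above.
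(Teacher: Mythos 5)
Your proof is correct and is the standard Cauchy--Hadamard argument; the paper does not prove this lemma itself but simply cites Conway, Theorem~III.1.3, where essentially the same argument appears. The only point worth flagging is one you already handle implicitly: ``uniform convergence for $|t|<r$'' must be read as uniform convergence on each closed disc $|t|\le\rho<r$ (equivalently, on compact subsets), not on the open disc itself, and your Weierstrass $M$-test step establishes exactly that.
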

\begin{proof}
See for example
Conway \cite{Conway:1973a}, Theorem~III.1.3.
\end{proof}

\begin{cor}\label{co:npj}
Let $M$ be a $kG$-module. Consider the power series
\[ f_M(t) = \sum_{n=0}^\infty \cc_n^G(M) t^n, \]
and let $r$ be the radius of convergence of $f_M(t)$. Then
\[ 1/r = \npj_G(M). \]
\end{cor}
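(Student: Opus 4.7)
The plan is to apply the Cauchy--Hadamard formula (Lemma~\ref{le:radius}) directly with $\phi(n)=\cc_n^G(M)$. First I would note that since $\cc_n^G(M)$ is a non-negative integer for every $n$, we have $|\phi(n)|=\cc_n^G(M)$, so Lemma~\ref{le:radius} yields
\[ 1/r = \limsup_{n\to\infty}\sqrt[n]{\cc_n^G(M)}. \]
By Definition~\ref{def:npj}, the right-hand side is exactly $\npj_G(M)$, and the corollary follows.

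There is no real obstacle here; the only thing worth remarking is the degenerate cases. If $M$ is projective then $\cc_n^G(M)=0$ for all $n\ge 1$ by Lemma~\ref{le:powerproj}, so $f_M(t)$ is a polynomial (in fact a constant), the radius of convergence is $r=\infty$, and $1/r=0=\npj_G(M)$ by Lemma~\ref{le:dimM}\,(ii), consistent with the convention $1/\infty=0$. In all other cases $\cc_n^G(M)\ge 1$ for all $n$, so $\npj_G(M)\ge 1$ and $r\le 1$ is finite and positive (bounded above by $1/(\dim M)>0$ if $M\ne 0$ by Lemma~\ref{le:dimM}\,(i)), and the formula holds verbatim.
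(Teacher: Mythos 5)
Your proof is correct and is exactly the paper's (implicit) argument: Corollary~\ref{co:npj} is stated as an immediate consequence of the Cauchy--Hadamard formula of Lemma~\ref{le:radius} applied with $\phi(n)=\cc_n^G(M)$, together with Definition~\ref{def:npj}. One minor slip in your closing remark: since $\cc_n^G(M)\le(\dim M)^n$, the bound you get is $r\ge 1/\dim M$, i.e.\ $r$ is bounded \emph{below} (not above) by $1/\dim M$; this does not affect anything, as the limsup formula already covers all cases, including the projective one with the convention $1/\infty=0$.
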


The following theorem will be used in Section~\ref{se:recursive}.

\begin{theorem}[Pringsheim]\label{th:Pringsheim}
Suppose that $\phi\colon \bZ_{\ge 0} \to \bR_{\ge 0}$, and that the
power series
\[ f(t)=\displaystyle\sum_{n=0}^\infty \phi(n)t^n \]
has radius of
convergence $r$. Then $t=r$ is a singular point of $f(t)$.
\end{theorem}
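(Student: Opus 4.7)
The plan is to argue by contradiction, exploiting the fact that all coefficients of $f$ are non-negative to move analyticity outward along the positive real axis.

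Suppose $t=r$ were a regular point of $f$. Then $f$ extends analytically to some open disk $B_\ep(r)$, and is also analytic on $|t|<r$, so it is analytic on the open set $U=\{|t|<r\}\cup B_\ep(r)$. Pick a real point $t_0\in(0,r)$ with $r-t_0<\ep$. For any $z$ on the circle $|z|=r$, the triangle inequality gives $|z-t_0|\ge r-t_0$, with equality only when $z=r$; thus every point on that circle except $r$ itself is at distance strictly greater than $r-t_0$ from $t_0$, and the point $r$ is contained in the analytic extension $B_\ep(r)$. Consequently the distance from $t_0$ to the complement of $U$ is strictly greater than $r-t_0$, so the Taylor series of $f$ about $t_0$ has radius of convergence $R>r-t_0$. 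Choose $\delta$ with $r-t_0<\delta<R$, so that $t_0+\delta>r$.

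The next step is to compute the derivatives term by term. Since $f(t)=\sum_n\phi(n)t^n$ converges absolutely on $|t|<r$ and $t_0$ lies strictly inside that disk, termwise differentiation gives
\[
f^{(k)}(t_0)=\sum_{n\ge k}\phi(n)\frac{n!}{(n-k)!}t_0^{n-k}\ge 0
\]
for every $k\ge 0$, because every summand is non-negative. Then the Taylor series
\[
f(t_0+\delta)=\sum_{k=0}^\infty \frac{f^{(k)}(t_0)}{k!}\delta^k
=\sum_{k=0}^\infty\sum_{n\ge k}\phi(n)\binom{n}{k}t_0^{n-k}\delta^k
\]
is a convergent double sum with all terms non-negative. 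Tonelli's theorem for non-negative series then licenses swapping the order of summation, yielding
\[
f(t_0+\delta)=\sum_{n=0}^\infty \phi(n)\sum_{k=0}^n\binom{n}{k}t_0^{n-k}\delta^k
=\sum_{n=0}^\infty\phi(n)(t_0+\delta)^n.
\]

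In particular, the power series $\sum_n\phi(n)t^n$ converges at the positive real number $t_0+\delta>r$, contradicting the hypothesis that $r$ is its radius of convergence. The only real work is the careful choice of $t_0$ so that the Taylor disk at $t_0$ genuinely extends past $r$; the heart of the argument, and the step that requires the hypothesis $\phi(n)\ge 0$ in an essential way, is the Tonelli rearrangement, which would fail without sign control on the coefficients and derivatives.
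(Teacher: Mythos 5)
Your proof is correct, and it is the classical Pringsheim argument: re-expand about a point $t_0$ slightly inside $r$, use positivity of the coefficients (hence of all derivatives at $t_0$) together with a Tonelli rearrangement to push convergence of $\sum\phi(n)t^n$ past $r$, contradicting the definition of the radius of convergence. The paper offers no proof of its own here, only a citation to Titchmarsh, and your argument is essentially the one found there; the only step you gloss over is that the distance from $t_0$ to the closed set $\bC\setminus U$ is actually attained (by compactness), which is what upgrades ``every point has distance $>r-t_0$'' to ``the infimum is $>r-t_0$.''
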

\begin{proof}
See Statement (7.21) in Chapter VII of Titchmarsh~\cite{Titchmarsh:1939a}.
\end{proof}

\section{Banach Algebras}\label{se:Banach}

We recall the basics of the theory of norms and spectral radius,
referring to Chapters~17--18 of Lax~\cite{Lax:2002a} for proofs.
We always work over the field of complex numbers.

\begin{defn}
A \emph{normed space} is a vector space $B$ over $\bC$,  together with a norm $B \to \bR$, $x
\mapsto \|x\|$, satisfying
\[  \|x+y\|\le\|x\|+\|y\|,\qquad\|cx\|=|c|\|x\|, \]
for $x,y\in B$, $c\in\bC$, such that $\|x\| \ge 0$ and $\|x\|=0$ if and only if $x=0$.
A \emph{Banach space} is a normed space
that is complete with respect to the norm.

A (unital) \emph{normed algebra} is an associative algebra $A$ over
$\bC$ with identity~$\one$ that is also a normed space, with the norm satisfying the additional conditions
\[ \|\one\|=1, \qquad\|xy\|\le\|x\|\|y\| \]
for $x,y\in A$.
A \emph{Banach algebra} is a normed algebra
that is also a Banach space. Note that we are assuming that
all our Banach algebras are unital.

If $a$ is an element of a Banach algebra $A$, we write
$\sigma(a)$ for the \emph{spectrum} of $a$, namely the
set of $\lambda\in\bC$ such that $\lambda\one-a$ is
not invertible in $A$. It is a non-empty closed bounded subset of
$\bC$. The \emph{spectral radius} of $a\in A$, denoted $\rho(a)$, is defined to be
$\displaystyle \sup_{\lambda\in\sigma(a)}|\lambda|$.
\end{defn}

Notice that if $A$ is finite dimensional and $a\in A$, then the spectral radius
$\rho(a)$ is just the largest absolute value of an eigenvalue of the
linear map induced by multiplying by $a$.

Let $A(G)=\bC\otimes_\bZ a(G)$, where $a(G)$
is the \emph{Green ring} or \emph{representation ring} of
finitely generated $kG$-modules.
Following \cite{Benson/Parker:1984a}, we write elements of
$A(G)$ in the form $\sum_i a_i[M_i]$ where $a_i\in\bC$ and
$M_i$ are indecomposable $kG$-modules. If $M=\bigoplus_i  M_i^{n_i}$
is a $kG$-module, we write $[M]$ for $\sum_i n_i[M_i] \in A(G)$.
Multiplication is extended bilinearly from $[M][N]=[M\otimes N]$.

We write $a(G,1)$ for the ideal of $a(G)$ spanned by the elements
$[P]$ with $P$ projective, and $a_0(G,1)$ for the linear span of the
elements of the form $[M_2]-[M_1]-[M_3]$ where
$0\to M_1\to M_2\to M_3 \to 0$ is a short exact sequence.
Then defining $A(G,1)=\bC\otimes_\bZ a(G,1)$ and 
$A_0(G,1)=\bC\otimes_\bZ a_0(G,1)$, we have
\[ A(G)=A(G,1)\oplus A_0(G,1). \]

We put a norm on $A(G)/A(G,1)\cong A_0(G,1)$ by setting
\[ \left\|\sum_ia_i[M_i]\right\|=\sum_i|a_i|\dim\core_G(M_i)
= \sum_{M_i \text{ non-projective}}|a_i|\dim M_i. \]
The reason for choosing this particular norm is that it
has two good properties:
\begin{enumerate}\item If $M$ is a $kG$-module then
$\|[M]\|=\dim\core_G(M)$; thus $\| \cdot \|$ is additive
on direct sums of genuine (as opposed to virtual) modules.
\item If $H$ is a subgroup of $G$ then restriction can only
reduce the norm: $\| M{\downarrow ^G_H} \| \le \| M \|$.
\end{enumerate}
This makes $A(G)/A(G,1)$ into a normed algebra, which we may
complete with respect to the norm to obtain a commutative
Banach algebra which we shall denote $\Ahat(G)$. Thus $A(G)/A(G,1)$
is a dense subalgebra of $\Ahat(G)$.

\begin{warning}
If $p$ does not divide $|G|$ then $\Ahat(G)=0$, which is not a
Banach algebra because it does not satisfy the condition
$ \|\one\|=1$. In this paper we always implicitly assume that the
characteristic of the field $k$ divides the order of the group.
\end{warning}

The role of the invariant
$\npj_G(M)$ in this context is that by Theorem~\ref{th:inf} we have
\begin{equation}\label{eq:norm}
\npj_G(M)=\lim_{n\to\infty} \sqrt[n]{\|[M]^n\|}.
\end{equation}

\begin{prop}[Spectral radius formula, Gelfand \cite{Gelfand:1941a}]\label{pr:Gelfand}
If $A$ is a Banach algebra and $a\in A$ then the spectral radius of
$a$ is related to the norm by the formula
\begin{equation}\label{eq:Gelfand}
\rho(a)=
  \lim_{n\to\infty}\sqrt[n]{\|a^n\|}.
\end{equation}
\end{prop}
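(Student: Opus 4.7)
The plan is to sandwich $\rho(a)$ between $\liminf_{n\to\infty}\sqrt[n]{\|a^n\|}$ and $\limsup_{n\to\infty}\sqrt[n]{\|a^n\|}$. Existence of the full limit (and its equality with $\inf_{n\ge 1}\sqrt[n]{\|a^n\|}$) then follows from Lemma~\ref{le:Fekete}, because $\|\one\|=1$ together with $\|a^{m+n}\|\le\|a^m\|\|a^n\|$ makes $\|a^n\|$ into a submultiplicative sequence in the sense of that section.

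For the lower bound $\rho(a)\le\liminf\sqrt[n]{\|a^n\|}$, I would first establish the spectral mapping statement that $\lambda\in\sigma(a)$ implies $\lambda^n\in\sigma(a^n)$, via the commuting factorisation
\[ \lambda^n\one - a^n = (\lambda\one - a)\bigl(\lambda^{n-1}\one + \lambda^{n-2}a + \cdots + a^{n-1}\bigr), \]
since any inverse of the left side would immediately give an inverse of $\lambda\one - a$. Combined with the elementary bound $|\mu|\le\|b\|$ for $\mu\in\sigma(b)$ (whose standard proof uses the Neumann series $\mu^{-1}\sum_{k\ge 0}(b/\mu)^k$ converging to $(\mu\one - b)^{-1}$ when $|\mu|>\|b\|$), this yields $|\lambda|^n\le\|a^n\|$ for every $\lambda\in\sigma(a)$ and every $n$, hence the desired inequality on taking $n$th roots and passing to the $\liminf$.

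For the upper bound $\limsup\sqrt[n]{\|a^n\|}\le\rho(a)$, I would exploit the resolvent $R(\lambda)=(\lambda\one - a)^{-1}$ as a Banach-algebra-valued holomorphic function on the open set $\bC\setminus\sigma(a)$. For $|\lambda|>\|a\|$ the Neumann expansion gives $R(\lambda)=\sum_{n\ge 0}\lambda^{-n-1}a^n$; substituting $z=1/\lambda$, the function $F(z)=zR(1/z)$ has Taylor series $\sum_{n\ge 0}a^n z^n$ near $0$ and extends holomorphically throughout the open disc $|z|<1/\rho(a)$. The main obstacle is the final step, namely deducing that the Taylor series itself converges \emph{in norm} throughout this disc. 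I would handle this by pairing $F$ with an arbitrary bounded linear functional $\phi$ on the Banach algebra, so that $\phi\circ F$ is a scalar-valued holomorphic function on $|z|<1/\rho(a)$ with Taylor coefficients $\phi(a^n)$; Lemma~\ref{le:radius} then forces $\limsup\sqrt[n]{|\phi(a^n)|}\le\rho(a)$, and the uniform boundedness principle upgrades this $\phi$-uniform scalar bound to the norm estimate $\limsup\sqrt[n]{\|a^n\|}\le\rho(a)$.
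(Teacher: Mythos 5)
Your argument is correct, and it is worth noting at the outset that the paper itself does not prove this proposition: it simply cites \S 17.1 of Lax. What you have written is essentially the standard textbook proof, rendered self-contained and nicely tied to the paper's own toolkit (Fekete's Lemma~\ref{le:Fekete} for existence of the limit, Cauchy--Hadamard Lemma~\ref{le:radius} for the scalar radius of convergence). The lower bound via the commuting factorisation of $\lambda^n\one-a^n$ and the Neumann-series estimate $|\mu|\le\|b\|$ is sound, as is the upper bound via the resolvent: pairing with functionals and invoking uniform boundedness to pass from the weak estimate $\sup_n|\phi(a^nr^n)|<\infty$ to $\sup_n\|a^nr^n\|<\infty$ for each $r<1/\rho(a)$ is exactly the right way to convert the scalar radius-of-convergence information into a norm bound. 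One trivial slip: with $R(\lambda)=(\lambda\one-a)^{-1}=\sum_{n\ge0}a^n\lambda^{-n-1}$, the function with Taylor coefficients $a^n$ is $R(1/z)/z$, not $zR(1/z)$ (the latter is $\sum_n a^nz^{n+2}$); this does not affect the radius of convergence or any subsequent step. You also implicitly use that $\sigma(a)$ is non-empty and bounded so that $\rho(a)$ is a well-defined non-negative real; the paper asserts this when it introduces the spectrum, so you may take it as given, but in a fully self-contained account it would need the Liouville argument.
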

\begin{proof}
See for example \S17.1 of Lax \cite{Lax:2002a}.
\end{proof}

\begin{theorem}\label{th:spec-radius}
If $M$ is a $kG$-module then $\npj_G(M) = \rho([M])$, where $[M]$ is the corresponding element of $\Ahat(G)$.
\end{theorem}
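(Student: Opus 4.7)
The plan is to read off the identity $\npj_G(M)=\rho([M])$ directly from two facts already in hand: the explicit form of the norm on $A(G)/A(G,1)$, and Gelfand's spectral radius formula.

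First I would observe that, by the multiplicative structure of the Green ring, $[M]^n=[M^{\otimes n}]$ in $A(G)$, and hence also in the quotient $A(G)/A(G,1)\subset \Ahat(G)$. By the definition of the norm on $A(G)/A(G,1)$, for any $kG$-module $N$ we have $\|[N]\|=\dim\core_G(N)$. Applied to $N=M^{\otimes n}$, this gives
\[ \|[M]^n\| = \|[M^{\otimes n}]\| = \dim\core_G(M^{\otimes n}) = \cc_n^G(M). \]

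Next I would invoke Theorem~\ref{th:inf} to rewrite $\npj_G(M)$ as a limit: $\npj_G(M)=\lim_{n\to\infty}\sqrt[n]{\cc_n^G(M)}$, which is exactly equation~(\ref{eq:norm}). Finally, Gelfand's spectral radius formula (Proposition~\ref{pr:Gelfand}), applied to the element $[M]\in\Ahat(G)$, says
\[ \rho([M]) = \lim_{n\to\infty}\sqrt[n]{\|[M]^n\|}. \]
Chaining these three equalities yields $\npj_G(M)=\rho([M])$, completing the proof.

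There is essentially no obstacle in this argument; the real work has already been done in setting up the norm and verifying the key identity $\|[M]\|=\dim\core_G(M)$, and in the submultiplicativity argument behind Theorem~\ref{th:inf} (which ensures the limit exists and agrees with the $\limsup$ definition of $\npj_G$). The only point worth double-checking is that it is legitimate to apply Gelfand's formula to $[M]$, even though $[M]$ lies in the dense subalgebra $A(G)/A(G,1)$ rather than only in the completion $\Ahat(G)$; but Gelfand's formula holds for any element of a Banach algebra, and $[M]\in \Ahat(G)$ under the canonical inclusion, so this is automatic.
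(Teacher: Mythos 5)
Your proposal is correct and is essentially identical to the paper's own proof, which likewise combines the identity $\|[M]^n\|=\cc_n^G(M)$ (equation~\eqref{eq:norm}, via Theorem~\ref{th:inf}) with Gelfand's spectral radius formula (Proposition~\ref{pr:Gelfand}). Your closing remark about $[M]$ sitting in the dense subalgebra is a reasonable point to note, and is handled exactly as you say.
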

\begin{proof}
This follows from \eqref{eq:norm} and Proposition~\ref{pr:Gelfand}.
\end{proof}

\begin{lemma}\label{le:1+a}
Let $a$ be an element in a Banach algebra
$A$, with $\rho(a)=r$. Then $r$ (as a real number) is an element of
$\sigma(a)\subseteq\bC$ if and only if $\rho( \one +
a)=1+r$.
\end{lemma}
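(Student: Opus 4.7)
The plan is to reduce to a statement about the non-empty compact subset $\sigma(a)\subseteq\bC$ via the polynomial spectral mapping theorem, which gives $\sigma(\one+a)=1+\sigma(a):=\{1+\lambda:\lambda\in\sigma(a)\}$. This is a standard fact for polynomials in a single element of a unital Banach algebra, proved by factoring $p(x)-p(\lambda)=(x-\lambda)q(x)$ and observing that $p(\lambda)\one-p(a)=(\lambda\one-a)q(a)$, with the two factors commuting. From this identity,
\[
\rho(\one+a)=\sup_{\mu\in\sigma(\one+a)}|\mu|=\sup_{\lambda\in\sigma(a)}|1+\lambda|,
\]
and the supremum is attained because $\sigma(a)$ is compact.

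Next I would record the universal bound. For every $\lambda\in\sigma(a)$, the ordinary triangle inequality gives
\[
|1+\lambda|\le 1+|\lambda|\le 1+r,
\]
so $\rho(\one+a)\le 1+r$ regardless of anything else.

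The forward direction is then immediate: if $r\in\sigma(a)$, then $1+r\in\sigma(\one+a)$, hence $\rho(\one+a)\ge 1+r$, and combining with the universal bound yields $\rho(\one+a)=1+r$.

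For the converse, suppose $\rho(\one+a)=1+r$. By compactness there exists $\lambda\in\sigma(a)$ with $|1+\lambda|=1+r$. The chain $1+r=|1+\lambda|\le 1+|\lambda|\le 1+r$ must be an equality throughout, so $|\lambda|=r$ and $|1+\lambda|=1+|\lambda|$. The equality case of the triangle inequality in $\bC$ (applied to the vectors $1$ and $\lambda$) forces $\lambda$ to be a non-negative real multiple of $1$, i.e.\ $\lambda\ge 0$, and combined with $|\lambda|=r$ this gives $\lambda=r$, so $r\in\sigma(a)$. The case $r=0$ is a degenerate instance of the same argument: then $\sigma(a)=\{0\}$ by non-emptiness of the spectrum, so $0=r\in\sigma(a)$ and $\rho(\one+a)=1$ both hold automatically. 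The main subtlety is simply to remember to invoke the equality case of the complex triangle inequality rather than the real one; everything else is routine once the spectral mapping reduction is made.
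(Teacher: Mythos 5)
Your proof is correct and follows essentially the same route as the paper: both arguments use the (trivial) spectral mapping identity $\sigma(\one+a)=1+\sigma(a)$, the compactness of the spectrum to ensure the supremum is attained, and the observation that the only point of the disc $\{|z-1|\le r\}$ at distance $1+r$ from the origin is the real number $1+r$ — which is exactly your equality case of the complex triangle inequality. No issues.
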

\begin{proof}
It is clear that $\sigma(\one + a)$ is the set of $1+\lambda$ with
$\lambda\in\sigma(a)$. So $\sigma(\one + a)$ is contained in a disc of
radius $r$ centred at $1\in\bC$. The only point in this disc at
distance $1+r$ from the origin is the real number $1+r$. Now using the
fact that $\sigma(a)$ is closed, we see that the spectral radius of
$\one + a$ is $1+r$ if and only if $1+r\in \sigma(\one + a)$, namely if
and only if $r\in\sigma(a)$.
\end{proof}

\begin{theorem}\label{th:SpecM}
Let $M$ be a $kG$-module. Then
the real number $\npj_G(M)$ is an element of $\sigma([M])\subseteq
\bC$.
\end{theorem}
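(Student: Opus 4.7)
The plan is to combine the Banach-algebra resolvent construction with Pringsheim's theorem, exploiting the non-negativity of the coefficients $\cc_n^G(M)$. By Theorem~\ref{th:spec-radius} we already know that $\npj_G(M)=\rho([M])$, so $\sigma([M])$ is contained in the closed disc of radius $\npj_G(M)$ about the origin. The substance of the theorem is that the positive real point $\npj_G(M)$ itself lies in $\sigma([M])$, not merely that some point of $\sigma([M])$ has modulus $\npj_G(M)$. The positivity of the sequence $\cc_n^G(M)$ is what pins the spectral value to the positive real axis.

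The first ingredient will be a suitably chosen continuous linear functional $\ell\colon \Ahat(G)\to\bC$. On $A(G)/A(G,1)$ define
\[
\ell\Bigl(\sum_i a_i[M_i]\Bigr)=\sum_{M_i\ \text{non-proj}} a_i\,\dim M_i,
\]
where the $M_i$ are indecomposable. This vanishes on $A(G,1)$ by construction, is $\bC$-linear, and satisfies $|\ell(x)|\le\|x\|$, so it extends by continuity to a bounded linear functional on $\Ahat(G)$. Crucially, for any $kG$-module $N$ we have $\ell([N])=\dim\core_G(N)$, so $\ell([M]^n)=\ell([M^{\otimes n}])=\cc_n^G(M)$ for every $n\ge 0$. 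Designing this functional and recognising that its values on the powers of $[M]$ recover the generating sequence is the main clever step; the rest is bookkeeping.

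To conclude, argue by contradiction: suppose $\npj_G(M)\notin\sigma([M])$. Since $\sigma([M])$ is closed, there is an open disc around $\npj_G(M)$ disjoint from $\sigma([M])$, so the resolvent $g(t):=(\one-t[M])^{-1}$, a $\Ahat(G)$-valued analytic function on $\{t\in\bC: 1/t\notin\sigma([M])\}\cup\{0\}$, is analytic on some open neighbourhood of $r:=1/\npj_G(M)$. For $|t|<r$, Gelfand's spectral radius formula (Proposition~\ref{pr:Gelfand}) gives the norm-convergent expansion $g(t)=\sum_{n\ge 0}t^n[M]^n$; applying $\ell$ term by term and using continuity yields
\[
\ell(g(t))=\sum_{n\ge 0}\cc_n^G(M)\,t^n = f_M(t)
\]
on $|t|<r$. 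Hence $\ell\circ g$ is an analytic function on a neighbourhood of $r$ that extends $f_M$ across $t=r$. But by Corollary~\ref{co:npj}, $r$ is the radius of convergence of $f_M$, and since $\cc_n^G(M)\ge 0$, Pringsheim's theorem (Theorem~\ref{th:Pringsheim}) forces $t=r$ to be a singular point of $f_M$. This contradicts the analytic extension, so $\npj_G(M)\in\sigma([M])$.
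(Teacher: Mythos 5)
Your proof is correct, but it takes a genuinely different route from the paper. The paper's own argument is a two-line deduction from results it has already established: Theorem~\ref{th:k-plus-M} gives $\npj_G(k\oplus M)=1+\npj_G(M)$, which via Theorem~\ref{th:spec-radius} says $\rho(\one+[M])=1+\rho([M])$, and the elementary geometric Lemma~\ref{le:1+a} shows this equality holds precisely when $\rho([M])\in\sigma([M])$. You instead give the classical ``positive element'' argument: a core-bounded positive linear functional $\ell$ with $\ell([M]^n)=\cc_n^G(M)$, the Neumann series for the resolvent, and Pringsheim's theorem to show the resolvent cannot be analytic at $t=1/\npj_G(M)$. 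Both proofs ultimately exploit the non-negativity of the coefficients $\cc_n^G(M)$ — the paper through the exact binomial identity behind Theorem~\ref{th:k-plus-M}, you through Pringsheim — and yours is essentially the standard Banach-lattice proof that the spectral radius of a positive element lies in the spectrum (the fact the authors attribute to Schaefer in Remark~\ref{rk:Banach}); it is more self-contained in spirit and generalises to any positive element, at the cost of invoking vector-valued analyticity of the resolvent, whereas the paper's route is shorter given its existing toolkit. One small point to tidy up: when $\npj_G(M)=0$ the radius of convergence is infinite and Pringsheim's theorem does not apply, so you should dispose of that case separately (then $M$ is projective, $[M]=0$ in $\Ahat(G)$, and $0\in\sigma(0)$ trivially).
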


\begin{proof}
By Theorem~\ref{th:k-plus-M}, we have $\npj_G(k\oplus M)=1+\npj_G(M)$.
By Theorem~\ref{th:spec-radius}, it follows that on $\hat A_1(G)$ we have
 $\rho(\one + [M])= 1+\rho([M])$. By Lemma~\ref{le:1+a} this implies that $\npj_G(M)\in\sigma([M])$.
\end{proof}

The way to connect spectral radius with the species of the Green ring
in the sense of Benson and Parker \cite{Benson/Parker:1984a} is the following.

\begin{theorem}\label{th:invertible}
An element $a$ of a commutative Banach algebra $A$ is invertible if and
only if $\phi(a)\ne 0$ for all algebra homomorphisms
$\phi\colon A\to\bC$.
\end{theorem}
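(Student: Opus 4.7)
The forward implication is immediate: if $a$ has an inverse $b\in A$, then any algebra homomorphism $\phi\colon A\to\bC$ satisfies $\phi(\one)=1$, so $\phi(a)\phi(b)=\phi(ab)=1$, which forces $\phi(a)\ne 0$. The content of the theorem is the converse.

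For the converse, suppose $a\in A$ is not invertible. The plan is to construct an algebra homomorphism $\phi\colon A\to\bC$ with $\phi(a)=0$ by passing to the quotient by a maximal ideal. Since $A$ is commutative and $a$ is non-invertible, the principal ideal $aA$ is proper (it does not contain $\one$). By Zorn's lemma, $aA$ is contained in some maximal ideal $\mathfrak{m}\subseteq A$. The key auxiliary fact is that every maximal ideal in a unital Banach algebra is automatically closed: indeed, the set of invertible elements is open (a classical consequence of convergence of Neumann series $\sum_{n\ge 0}(\one-x)^n$ for $\|x-\one\|<1$), so its complement is closed; the closure $\overline{\mathfrak{m}}$ of a proper ideal consists of non-invertible elements together with $\mathfrak{m}$ itself, hence is still a proper ideal, and by maximality $\overline{\mathfrak{m}}=\mathfrak{m}$.

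Because $\mathfrak{m}$ is closed, the quotient $A/\mathfrak{m}$ inherits a quotient norm making it a commutative Banach algebra, and by maximality of $\mathfrak{m}$ it is a field. The main obstacle is now the Gelfand--Mazur theorem: every complex Banach algebra that is a field is isomorphic, as a Banach algebra, to $\bC$. I would cite this directly (it is in Lax \cite{Lax:2002a}, Chapter~17). The proof of Gelfand--Mazur itself ultimately rests on the non-emptiness of the spectrum of any element in a Banach algebra, which is established by applying Liouville's theorem to the resolvent function $\lambda\mapsto(\lambda\one-x)^{-1}$; this is the delicate analytic input.

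Granting Gelfand--Mazur, let $\phi\colon A\to A/\mathfrak{m}\cong\bC$ be the composite of the quotient map with the isomorphism. Then $\phi$ is an algebra homomorphism, and $\phi(a)=0$ because $a\in aA\subseteq\mathfrak{m}$. This completes the converse direction and thus the theorem.
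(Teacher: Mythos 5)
Your proof is correct: this is the standard Gelfand-theory argument (maximal ideals are closed, the quotient is a Banach field, Gelfand--Mazur identifies it with $\bC$). The paper does not prove this theorem itself but simply cites Theorem~3 in Chapter~18 of Lax, whose proof is exactly the argument you give, so there is nothing to compare beyond noting that you have written out the standard proof in full.
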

\begin{proof}
See Theorem~3 in Chapter~18 of Lax \cite{Lax:2002a}.
\end{proof}

\begin{rk}\label{rk:continuous}
Note that if $A$ is a commutative Banach algebra and
$\phi\colon A \to \bC$ is an
algebra homomorphism then for all $a\in A$ we have
$|\phi(a)|\le\|a\|$. It follows that $\phi$ is automatically continuous
with respect to the norm.
See Theorem~1 in Chapter~18 of Lax~\cite{Lax:2002a}.
\end{rk}

\begin{cor}\label{co:radius}
If $a$ is an element of a commutative Banach algebra then
$\sigma(a)$ is the set of values of $\phi(a)$ as $\phi$ runs over
the algebra homomorphisms $A \to \bC$.
The spectral radius $\rho(a)$
is equal to $\displaystyle\sup_{\phi\colon A \to \bC} |\phi(a)|$.
\end{cor}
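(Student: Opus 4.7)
The plan is to derive both assertions directly from Theorem~\ref{th:invertible}, using only the observation that an algebra homomorphism $\phi\colon A\to\bC$ is unital, so $\phi(\one)=1$ and consequently $\phi(\lambda\one-a)=\lambda-\phi(a)$ for any $\lambda\in\bC$. In particular, applying $\phi$ to any invertible element of $A$ yields a nonzero scalar.

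I would establish the first statement as a set equality by proving both inclusions. For $\sigma(a)\subseteq\{\phi(a):\phi\}$, take $\lambda\in\sigma(a)$, so $\lambda\one-a$ is not invertible; Theorem~\ref{th:invertible} then produces an algebra homomorphism $\phi\colon A\to\bC$ with $\phi(\lambda\one-a)=0$, whence $\phi(a)=\lambda$. For the reverse inclusion, suppose $\lambda=\phi(a)$ for some homomorphism $\phi$; then $\phi(\lambda\one-a)=0$, so $\lambda\one-a$ cannot be invertible (otherwise $\phi$ applied to its inverse would yield a nonzero complex number whose product with $0$ equals $1$), so $\lambda\in\sigma(a)$.

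The second statement is then immediate from the first and the definition of the spectral radius:
\[ \rho(a)=\sup_{\lambda\in\sigma(a)}|\lambda|=\sup_{\phi\colon A\to\bC}|\phi(a)|. \]
There is no genuine obstacle here; the entire argument is a formal translation of the invertibility criterion of Theorem~\ref{th:invertible} into the language of spectra. The only point worth flagging is the unital convention for algebra homomorphisms, which is precisely what lets one pass between the conditions $\phi(\lambda\one-a)=0$ and $\phi(a)=\lambda$.
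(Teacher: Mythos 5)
Your argument is correct and is essentially the paper's own proof: both derive the set equality by applying Theorem~\ref{th:invertible} to the element $\lambda\one-a$ and then read off the spectral radius statement from the definition of $\rho(a)$. Your write-up merely spells out the two inclusions and the unitality point that the paper leaves implicit.
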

\begin{proof}
It follows from Theorem~\ref{th:invertible} that
$\lambda\one-a$ is not invertible if and only if there
exists an algebra homomorphism
$\phi\colon A \to \bC$ such that $\phi(a)=\lambda$.
\end{proof}

\begin{defn}
Recall from \cite{Benson/Parker:1984a} that a \emph{species}
of $a(G)$ is a ring homomorphism $s\colon a(G)\to \bC$.
A species of $a(G)$ extends to give an algebra homomorphism $s\colon A(G) \to
\bC$, and all algebra homomorphisms have this form.

We say that a species $s$ of $a(G)$ is \emph{core-bounded} if for all
$kG$-modules $M$ we have
\[ |s([M])|\le\dim\core_G(M). \]
In particular, the extension of a core-bounded species to $A(G)$
vanishes on $A(G,1)$, and so
defines an algebra homomorphism $A(G)/A(G,1)\to\bC$.
So for example the Brauer species, namely the ones that vanish on
$A_0(G,1)$, are not core-bounded because they
do not vanish on projective modules.
\end{defn}

\begin{lemma}
\label{le:s-Omega}
If $s$ is any core-bounded species then $|s([\Omega^i(k)])|=1$ for any $i \in \bZ$.
\end{lemma}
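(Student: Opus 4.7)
The plan is to combine two ingredients: an upper bound $|s([\Omega^i(k)])|\le 1$ for every $i\in\bZ$, together with a multiplicative identity $s([\Omega^i(k)])\cdot s([\Omega^{-i}(k)])=1$. The two pieces together force both factors to have modulus exactly $1$.

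For the upper bound, the naive inequality $|s([\Omega^i(k)])|\le\dim\core_G(\Omega^i(k))=\dim\Omega^i(k)$ is too weak, since these dimensions grow with $|i|$. Instead I would apply core-boundedness to tensor powers and exploit that $s$ is a ring homomorphism:
\[ |s([\Omega^i(k)])|^n=|s([(\Omega^i(k))^{\otimes n}])|\le\dim\core_G((\Omega^i(k))^{\otimes n})=\cc_n^G(\Omega^i(k)). \]
Taking $n$th roots and passing to the limit gives $|s([\Omega^i(k)])|\le\npj_G(\Omega^i(k))$. By Theorem~\ref{th:Omega} applied $|i|$ times, this equals $\npj_G(k)$, which in turn equals $1$ by Lemma~\ref{le:dimM}(iv) (using the standing assumption that $p$ divides $|G|$). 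This same trick is essentially the spectral-radius bound $|\hat s(a)|\le\rho(a)$ in $\Ahat(G)$ via Gelfand's formula \eqref{eq:Gelfand}, but phrased without invoking the Banach-algebra completion.

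For the multiplicative identity, I would use the standard Heller-operator fact that $\Omega^i(k)$ is endotrivial with inverse $\Omega^{-i}(k)$, that is, $\Omega^i(k)\otimes\Omega^{-i}(k)\cong k\oplus\proj$. Applying the ring homomorphism $s$ and noting that $s([\proj])=0$ (a direct consequence of core-boundedness, since projective modules have zero core) gives
\[ s([\Omega^i(k)])\cdot s([\Omega^{-i}(k)])=s([k])+0=1. \]
Combined with the bounds $|s([\Omega^i(k)])|\le 1$ and $|s([\Omega^{-i}(k)])|\le 1$ established above, this forces equality in both, yielding the desired conclusion.

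There is no real obstacle in this argument once one recognises that the upper bound must be obtained from tensor powers rather than from the dimension of $\Omega^i(k)$ itself; the rest is a two-line pairing of an inequality with a multiplicative inverse. The only book-keeping subtlety is to record at the outset that $s$ vanishes on projective modules, so that the endotrivial relation for $\Omega^i(k)\otimes\Omega^{-i}(k)$ becomes a genuine equation of scalars under $s$.
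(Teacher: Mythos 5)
Your proof is correct and follows essentially the same strategy as the paper's: an upper bound $|s([\Omega^i(k)])|\le 1$ obtained from the polynomial growth of the cores of tensor powers of syzygies of $k$, combined with the relation $\Omega^i(k)\otimes\Omega^{-i}(k)\cong k\oplus\proj$ to force equality. The only cosmetic difference is that you package the upper bound as $|s(a)|\le\npj_G(a)$ via Theorem~\ref{th:inf} and Theorem~\ref{th:Omega}, whereas the paper bounds $|s([\Omega k])|^i\le\dim\core_G(\Omega^i k)$ directly and then handles general $i$ by multiplicativity.
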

\begin{proof}
We have $|s([\Omega k])|^i = |s([\Omega k]^i)|= |s([\Omega^i k])| \le
\dim \core_G (\Omega^i k)$. But $\dim \core_G (\Omega^i k)$ grows
polynomially in $i$ (see for example \cite{Benson:1991b}\,\S 5.3), so
$|s([\Omega k])| \le 1$. The same holds for $|s([\Omega^{-1} k])|$;
but $|s([\Omega k])| \, |s([\Omega^{-1} k])|=|s([k])|=1$, so we
must have $|s([\Omega k])| = |s([\Omega^{-1} k])| =1$.
The general case follows from the first formula in this proof.
\end{proof}

\begin{eg}
Examining the species for $\bZ/2\times\bZ/2$
described in Appendix~1 of~\cite{Benson/Parker:1984a}, we see that
not every species that vanishes on $A(G,1)$ is core-bounded.
In this example, the quotient of $A(G)$ by the ideal spanned
by the indecomposables of even dimension is isomorphic to
the group algebra $\bC[t,t^{-1}]$, via an isomorphism sending
$\Omega(k)$ to $t$ and $\Omega^{-1}(k)$ to $t^{-1}$.
So there are species $s_z$ parametrised by the non-zero $z\in\bC$,
which factor through this quotient, and satisfy 
$s_z(\Omega(k))=z$, $s_z(\Omega^{-1}(k))=z^{-1}$. 
Only the ones with $z$
on the unit circle are core-bounded. This is because the dimension
of $\Omega^n(k)$ is $2|n|+1$, whereas if $z$ is not on the
unit circle then either the powers $z^n$ or the powers $z^{-n}$
grow exponentially with $n$ in absolute value.
\end{eg}

The following proposition shows that there is a natural correspondence
between core-bounded species of $a(G)$ and algebra homomorphisms
$\Ahat(G) \to \bC$.

\begin{prop}\label{pr:core-bounded}
For a species $s\colon A(G)\to\bC$, the following are equivalent:
\begin{enumerate}
\item $s$ is core-bounded.
\item For all $x\in A(G)$ we have $|s(x)|\le \|x\|$.
\item $s$ is continuous with respect to the norm.
\item $s$ extends to an algebra homomorphism $\Ahat(G) \to\bC$.
\end{enumerate}
\end{prop}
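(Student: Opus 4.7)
The plan is to prove the equivalence via the cyclic chain of implications (i) $\Rightarrow$ (ii) $\Rightarrow$ (iii) $\Rightarrow$ (iv) $\Rightarrow$ (i). Throughout, I use the key structural fact that the seminorm $\|\cdot\|$ on $A(G)$ vanishes precisely on the ideal $A(G,1)$ of projective classes, so it descends to a genuine norm on $A(G)/A(G,1)$, which is by construction a dense subalgebra of $\Ahat(G)$.

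For (i) $\Rightarrow$ (ii), expand $x=\sum_i a_i[M_i]$ in the indecomposable basis and apply the triangle inequality together with the core-bounded hypothesis:
\[ |s(x)|\le\sum_i|a_i|\,|s([M_i])|\le\sum_i|a_i|\,\dim\core_G(M_i)=\|x\|. \]
(In particular $s([P])=0$ for $P$ projective, so $s$ vanishes on $A(G,1)$.) The implication (ii) $\Rightarrow$ (iii) is immediate from $|s(x)-s(y)|=|s(x-y)|\le\|x-y\|$, which makes $s$ a $1$-Lipschitz function and hence continuous.

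For (iii) $\Rightarrow$ (iv), observe that a continuous linear functional must vanish on the closure of the kernel of the seminorm, so $s$ factors through $A(G)/A(G,1)$. As a continuous linear map from a dense subspace of $\Ahat(G)$ into the complete space $\bC$, it extends uniquely to a continuous linear map $\hat s\colon\Ahat(G)\to\bC$. To check that $\hat s$ is still a ring homomorphism, take $\hat x,\hat y\in\Ahat(G)$ and approximate them by sequences $x_n,y_n$ in $A(G)/A(G,1)$. Since multiplication in a Banach algebra is jointly continuous, $x_ny_n\to\hat x\hat y$, and passing to the limit in the identity $s(x_ny_n)=s(x_n)s(y_n)$ yields $\hat s(\hat x\hat y)=\hat s(\hat x)\hat s(\hat y)$.

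Finally, for (iv) $\Rightarrow$ (i), I invoke the automatic continuity of algebra homomorphisms on a commutative Banach algebra, recorded as Remark~\ref{rk:continuous}: any such $\hat s$ satisfies $|\hat s(a)|\le\|a\|$ for every $a\in\Ahat(G)$. Specialising to $a=[M]$ and using the defining identity $\|[M]\|=\dim\core_G(M)$ recovers the core-bounded inequality. The only genuine subtlety in the whole argument is verifying that the extension produced in (iii) $\Rightarrow$ (iv) remains multiplicative, and this is handled cleanly by joint continuity of the product in $\Ahat(G)$; the rest is bookkeeping about when a seminorm-continuous functional descends through its vanishing ideal.
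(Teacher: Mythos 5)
Your proof is correct and follows essentially the same route as the paper: the cyclic chain (i) $\Rightarrow$ (ii) $\Rightarrow$ (iii) $\Rightarrow$ (iv) $\Rightarrow$ (i), with (i) $\Rightarrow$ (ii) by the triangle inequality on the indecomposable expansion and (iv) $\Rightarrow$ (i) via the automatic continuity of characters recorded in Remark~\ref{rk:continuous}. The paper simply declares (ii) $\Rightarrow$ (iii) $\Rightarrow$ (iv) to be clear, whereas you spell out the extension to the completion and the preservation of multiplicativity; that is a harmless elaboration, not a different argument.
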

\begin{proof}
The implications (ii) $\Rightarrow$ (iii) $\Rightarrow$ (iv) are
clear, and the implication (iv) $\Rightarrow$ (i) follows from
Remark~\ref{rk:continuous}. So it remains to prove that
(i) $\Rightarrow$ (ii). Suppose that $s$ is core-bounded, and
write $x=\sum_ia_i[M_i]$ where the $M_i$ are
indecomposable. Then $s(x)=\sum_ia_is([M_i])$ and so
\begin{equation*}
|s(x)|\le \sum_i|a_i| \, |s([M_i])| \le \sum_i|a_i|\dim\core(M_i) =\|x\|.
\qedhere
\end{equation*}
\end{proof}

\begin{theorem}\label{th:species}
If $M$ is a $kG$-module then
\[ \npj_G(M)=\sup_{s\colon a(G)\to\bC} |s([M])| \]
where the supremum runs over the core-bounded species of
$a(G)$. Furthermore, there exists a core-bounded species $s$ of $a(G)$
such that $s([M])=\npj_G(M)$.
\end{theorem}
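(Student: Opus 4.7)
The plan is to combine the results developed earlier in this section with Theorem~\ref{th:spec-radius}, which identifies $\npj_G(M)$ with the spectral radius $\rho([M])$ in the commutative Banach algebra $\hat A_1(G)$.

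First I would invoke Corollary~\ref{co:radius}, which says that for any element $a$ of a commutative Banach algebra $A$,
\[
\rho(a) = \sup_{\phi\colon A\to\bC}|\phi(a)|,
\]
where the supremum is over all algebra homomorphisms. Applying this to $a=[M]$ in $A=\hat A_1(G)$ and combining with Theorem~\ref{th:spec-radius}, we get
\[
\npj_G(M)=\rho([M])=\sup_{\phi\colon \hat A_1(G)\to\bC}|\phi([M])|.
\]
Next I would translate this supremum back to one taken over core-bounded species of $a(G)$. By Proposition~\ref{pr:core-bounded}, the algebra homomorphisms $\hat A_1(G)\to\bC$ are precisely the extensions of core-bounded species $s\colon a(G)\to\bC$, and each core-bounded species vanishes on $A(G,1)$ so that $s([M])$ depends only on the class of $[M]$ in $A(G)/A(G,1)\subseteq \hat A_1(G)$. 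This gives the desired formula
\[
\npj_G(M)=\sup_{s}|s([M])|,
\]
where $s$ ranges over core-bounded species of $a(G)$.

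It remains to show the supremum is attained. Here I would use Theorem~\ref{th:SpecM}, which tells us that the real number $\npj_G(M)$ actually lies in $\sigma([M])\subseteq\bC$. By the ``only if'' direction of Theorem~\ref{th:invertible}, the non-invertibility of $\npj_G(M)\cdot\one-[M]$ in $\hat A_1(G)$ produces an algebra homomorphism $\phi\colon \hat A_1(G)\to\bC$ with $\phi([M])=\npj_G(M)$. Restricting $\phi$ to $A(G)/A(G,1)$ and pulling back to $a(G)$ yields a core-bounded species $s$ with $s([M])=\npj_G(M)$, as required.

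The main obstacle, if any, is purely conceptual rather than technical: one must be careful that the ``species'' language of \cite{Benson/Parker:1984a} (ring homomorphisms $a(G)\to\bC$) is correctly matched with the Banach-algebra language (continuous algebra homomorphisms $\hat A_1(G)\to\bC$). That identification is exactly what Proposition~\ref{pr:core-bounded} was set up to provide, so once it is cited the proof reduces to assembling the pieces.
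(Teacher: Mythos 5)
Your proof is correct and follows exactly the paper's own argument: the equality is obtained by combining Theorem~\ref{th:spec-radius}, Corollary~\ref{co:radius} and Proposition~\ref{pr:core-bounded}, and the attainment of the supremum follows from Theorem~\ref{th:SpecM}. You have simply spelled out the details of assembling these pieces more explicitly than the paper does.
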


\begin{proof}
This equality follows from
Theorem~\ref{th:spec-radius}, Corollary~\ref{co:radius}
and Proposition~\ref{pr:core-bounded}. The final statement follows
from Theorem~\ref{th:SpecM}.
\end{proof}

\begin{cor}
For any $kG$-module $M$, the restriction of $\npj_G$ to the
sub-semiring of $A(G)$ consisting of elements of the form
$f([M])= \sum_{i=0}^n a_i[M]^i$ with the $a_i$ real and non-negative
is additive and multiplicative.
\end{cor}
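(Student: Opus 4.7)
The plan is to prove the stronger formula
\[ \npj_G(f([M])) = f(\npj_G(M)) \]
for every polynomial $f(t)=\sum_{i=0}^n a_i t^i$ with non-negative real coefficients, where we interpret $\npj_G$ on arbitrary elements of $A(G)$ as the spectral radius in $\Ahat(G)$ (equivalently, by Theorem~\ref{th:species}, as the supremum of $|s(\cdot)|$ over core-bounded species $s$). This extends the original definition by Theorem~\ref{th:spec-radius}. Once the formula is in hand, both additivity and multiplicativity reduce to identities of real numbers: $f(r)+g(r)=(f+g)(r)$ and $f(r)g(r)=(fg)(r)$ for the non-negative real $r=\npj_G(M)$, noting that $f+g$ and $fg$ again have non-negative coefficients and so lie in the given sub-semiring.

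For the lower bound, I would invoke the ``furthermore'' clause of Theorem~\ref{th:species} (equivalently Theorem~\ref{th:SpecM}), which produces a core-bounded species $s_0$ with $s_0([M])=\npj_G(M)$, a non-negative real number. Since $s_0$ is a ring homomorphism,
\[ s_0(f([M]))=\sum_{i=0}^n a_i\, s_0([M])^i = f(\npj_G(M)), \]
and this value is itself a non-negative real. Therefore $\npj_G(f([M]))\ge |s_0(f([M]))|=f(\npj_G(M))$.

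For the reverse bound, let $s$ be any core-bounded species. Using the triangle inequality (valid because each $a_i\ge 0$), the bound $|s([M])|\le\npj_G(M)$ from Theorem~\ref{th:species}, and the monotonicity of $f$ on $[0,\infty)$, we get
\[ |s(f([M]))|=\Bigl|\sum_{i=0}^n a_i\, s([M])^i\Bigr|\le\sum_{i=0}^n a_i\,|s([M])|^i\le f(\npj_G(M)). \]
Taking the supremum over all core-bounded $s$ and applying Theorem~\ref{th:species} once more gives $\npj_G(f([M]))\le f(\npj_G(M))$, which finishes the proof of the formula and hence of the corollary. There is no real obstacle here; the only conceptual point is that the ability to realise the spectral radius by a species that takes a \emph{real, non-negative} value on $[M]$ is exactly what lets $\npj_G$ commute with the polynomial $f$.
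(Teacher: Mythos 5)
Your proof is correct and takes essentially the same approach as the paper: the paper's own (terser) argument chooses a core-bounded species $s$ with $s([M])=\npj_G(M)$ and asserts that this $s$ also maximises $|s(f([M]))|$, which is precisely the triangle-inequality and monotonicity estimate you spell out, together with the realisation of the spectral radius from Theorem~\ref{th:SpecM}. Your write-up just makes the implicit upper bound explicit.
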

\begin{proof}
By Theorem~\ref{th:species}, we may choose a species $s$
that maximises $|s([M])|$, and such that $s([M])=\npj_G(M)$.
Such an $s$ also maximises $|s(f([M]))|$.
\end{proof}

\begin{question}
What can be said about the \emph{quasi-nilpotent} elements of
$\Ahat(G)$, namely the elements $a$ satisfying
$\displaystyle\lim_{n\to\infty}\sqrt[n]{\|a^n\|}=0$?
These are the elements on which all core-bounded species vanish,
and they form the Jacobson radical of $\Ahat(G)$.
Is this the closure of the nil radical, or are there
more subtle ways of producing quasi-nilpotent elements?

Examples in analysis of
quasi-nilpotent operators which are not nilpotent
can be found in Examples~2.1.6 and~2.1.7 of Kaniuth~\cite{Kaniuth:2009a}.
Quasi-nilpotent elements also go by other names
in the literature. For example in \S I.4 of Gelfand, Raikov and
Shilov~\cite{Gelfand/Raikov/Shilov:1964a}  they are
called \emph{generalised nilpotent}, while in
Rickart~\cite{Rickart:1974a} they are called 
\emph{topologically nilpotent}. 
\end{question}

\begin{rk}\label{rk:Banach}
Many of the properties of $\npj_G$ that we have described correspond
to well-known properties of the spectral radius in a Banach algebra,
although we have chosen an exposition that is self-contained except
for Theorem~\ref{pr:Gelfand}. This applies to Theorem~\ref{th:omni}
(i), (ii), (iv), (viii), the second inequality of (x), (xi), (xiii)
and (xiv). Others require a Banach lattice: these can record the
special role played by the linear combinations of modules with
real non-negative coefficients, which roughly approximate the
image of genuine modules as opposed to virtual ones; see, for
example, the book by Schaefer~\cite{Schaefer:1974a}. The first
inequality of Theorem~\ref{th:omni} (x) and also (xii) correspond
to facts about Banach lattices, as does Theorem~\ref{th:SpecM}
(\cite[Prop.\ V 4.1]{Schaefer:1974a}).

If we knew that $\Ahat(G)$ was a \emph{symmetric} Banach algebra,
then Conjecture~\ref{conj:EndM} would follow. Symmetric Banach
algebras are extensively discussed in \S 4.7 of 
Rickart~\cite{Rickart:1974a} and in \S I.8 of
Gelfand, Raikov and Shilov~\cite{Gelfand/Raikov/Shilov:1964a}.
\end{rk}

\section{Cyclic groups}
\label{se:cyclic}

The computations in this section are based on Green \cite{Green:1962a}.
For the purpose of this section only, let $G=\bZ/p$ be the
cyclic group of order $p$, where $p>2$ is the characteristic
of the field $k$, and let $M_j$ be the indecomposable
$kG$-module of dimension $j$ for $1\le j\le p$. Then we have
\[ M_2 \otimes M_j \cong \begin{cases}
M_2 & j=1 \\
M_{j+1} \oplus M_{j-1} & 2\le j \le p-1 \\
M_p \oplus M_p & j=p. \end{cases} \]

Let $U_j(x)$ be the Chebyshev polynomial of the second kind, defined by
the recurrence relation $U_0(x)=1$, $U_1(x)=2x$,
$U_j(x)=2xU_{j-1}(x)-U_{j-2}(x)$ ($j\ge 2$). These polynomials 
satisfy
\[ U_j(\cos\theta)=\frac{\sin(j+1)\theta}{\sin\theta} \]
for all $j\ge 0$. For $j\ge 1$,
the roots of $U_j(x)$ are real and distinct, symmetric about
$x=0$, and given by
\[ x=\cos(m\pi/(j+1)),\quad 1\le m \le j. \]

We define $f_j(x)=U_{j-1}(x/2)$. So $f_1(x)=1$, $f_2(x)=x$, and
$xf_j(x)=f_{j+1}(x)+f_{j-1}(x)$ ($j\ge 2$). Then we have
\[ f_j\left(\frac{\sin 2\theta}{\sin\theta}\right) =
  \frac{\sin j\theta}{\sin\theta}. \]
Note that $f_p(x)$ is an irreducible polynomial in $x^2$. For example,
we have $f_3(x)=x^2-1$ and $f_5(x)=x^4-3x^2+1$. The roots of $f_p(x)$
are given by $x=2\cos(m\pi/p)$ $(1\le m\le p-1)$.

In the ring $a(G)/a(G,1)$, we have $[M_2][M_ j]=[M_{j+1}]+[M_{j-1}]$
and $[M_p]=0$. It follows that $f_j([M_2])=[M_j]$ and $f_p([M_2])=0$.
We therefore have
\[ a(G)/a(G,1) \cong \bZ[X]/(f_p(X)) \]
where $X$ corresponds to $[M_2]$. The core-bounded species of
$a(G)$ are just the non-Brauer species, and are
given by
\[ s_m\colon [M_2]\mapsto 2\cos(m\pi/p)\qquad (1\le m\le p-1). \]

\begin{theorem}\label{th:Z/p}
In the case of the indecomposable $kG$-module $M_j$ ($1\le j\le p-1$)
for the cyclic group $G$ of order $p$, the characteristic of $k$, we have
\[ \npj_G(M_j) = \frac{\sin (j\pi/p)}{\sin (\pi/p)} = f_j(\npj_G(M_2)) \]
where
\[ \npj_G(M_2)=\frac{\sin (2\pi/p)}{\sin (\pi/p)}=2\cos (\pi/p) . \]
In fact, $\npj_G$ is additive and multiplicative on modules,
so this determines $\npj_G$ on any module.
\end{theorem}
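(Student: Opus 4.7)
The plan is to use Theorem~\ref{th:spec-radius} together with the explicit identification $\Ahat(G)\cong\bC[X]/(f_p(X))$ and the list of core-bounded species $s_m$ given just before the theorem. By Corollary~\ref{co:radius}, $\npj_G(M_j)=\rho([M_j])$ equals the maximum of $|s_m([M_j])|$ over $1\le m\le p-1$. Since $[M_j]=f_j([M_2])$ and $s_m([M_2])=2\cos(m\pi/p)$, one has $s_m([M_j])=f_j(2\cos(m\pi/p))=\sin(jm\pi/p)/\sin(m\pi/p)$, so the task reduces to showing that this maximum is attained at $m=1$, giving $\sin(j\pi/p)/\sin(\pi/p)$.

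To see this I would pass to a Perron--Frobenius picture. Let $T$ be the $(p-1)\times(p-1)$ matrix representing multiplication by $[M_2]$ on $\Ahat(G)$ in the basis $[M_1],\ldots,[M_{p-1}]$; the relations $[M_2][M_k]=[M_{k-1}]+[M_{k+1}]$ (with the convention $[M_0]=[M_p]=0$ modulo projectives) make $T$ the symmetric tridiagonal $0$--$1$ matrix with ones on the sub- and superdiagonals. The vector $v\in\bR^{p-1}$ with $v_\ell=\sin(\ell\pi/p)/\sin(\pi/p)$ satisfies $Tv=2\cos(\pi/p)\,v$ and has strictly positive coordinates. Multiplication by $[M_j]=f_j([M_2])$ is then $f_j(T)$, whose entries are the non-negative integers giving the multiplicities of $[M_i]$ in $[M_j]\cdot[M_k]$ (Krull--Schmidt). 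Since $v$ is a strictly positive eigenvector of $f_j(T)$ with positive eigenvalue $\sin(j\pi/p)/\sin(\pi/p)$, the standard Perron-type lemma---for a non-negative matrix $A$, a strictly positive eigenvector with non-negative eigenvalue forces that eigenvalue to equal $\rho(A)$, proved by comparing $|w|$ to $cv$ at the index where $c=\max_i|w_i|/v_i$ is attained---yields $\rho([M_j])=\sin(j\pi/p)/\sin(\pi/p)$.

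For the additivity and multiplicativity of $\npj_G$ on arbitrary modules, I would observe that the species $s_1$ attains the maximum $|s_m([M])|$ uniformly for \emph{every} module $M$. Indeed, writing $M\cong\bigoplus_j a_j M_j$ with $a_j\in\bZ_{\ge 0}$, the triangle inequality and the bound $|s_m([M_j])|\le s_1([M_j])$ just established give
$|s_m([M])|\le\sum_j a_j|s_m([M_j])|\le\sum_j a_j\,s_1([M_j])=s_1([M])$,
so $\npj_G(M)=s_1([M])$ by Theorem~\ref{th:species}. Additivity on direct sums and multiplicativity on tensor products then follow at once from $s_1$ being a ring homomorphism.

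The main obstacle is the trigonometric inequality $|\sin(jm\pi/p)/\sin(m\pi/p)|\le\sin(j\pi/p)/\sin(\pi/p)$, which is awkward to prove by direct manipulation. The Perron--Frobenius reformulation bypasses this by realising $s_1$ as evaluation at the positive Perron eigenvector of $T$, making the required bound automatic.
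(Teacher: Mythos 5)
Your proposal is correct. It shares the paper's overall framework --- reduce via Theorem~\ref{th:spec-radius} and Corollary~\ref{co:radius} (equivalently Theorem~\ref{th:species}) to maximising $|s_m([M_j])|$ over the core-bounded species $s_m$, show the maximum is attained at $m=1$, and then observe that since $s_1$ takes positive values on all the $[M_j]$ it simultaneously maximises $|s([M])|$ for every module $M$, so $\npj_G=s_1$ is additive and multiplicative. Where you diverge is at the central inequality $|\sin(jm\pi/p)/\sin(m\pi/p)|\le \sin(j\pi/p)/\sin(\pi/p)$: the paper proves this by expanding $\sin(j\theta)/\sin(\theta)$ as a sum of cosines (a geometric series in $e^{im\pi/p}$ with conjugate terms paired) and noting that the $m=1$ subset picks out the largest cosines, whereas you realise multiplication by $[M_j]$ as the non-negative integer matrix $f_j(T)$ with $T$ the tridiagonal Chebyshev matrix, exhibit the strictly positive vector $v_\ell=\sin(\ell\pi/p)/\sin(\pi/p)$ as an eigenvector with positive eigenvalue, and invoke the standard Perron-type lemma to conclude that this eigenvalue is the spectral radius. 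Both arguments are complete; yours avoids the slightly delicate trigonometric bookkeeping (which in the paper is only sketched and left partly to the reader), at the cost of importing the Perron comparison lemma, and it fits naturally with the Banach-lattice/Frobenius--Perron philosophy the authors themselves advertise in Remark~\ref{rk:Banach} and use in Proposition~\ref{pr:subop}. The identification $\rho([M_j])=\rho(f_j(T))$ that you use implicitly is justified by the discussion at the start of Section~\ref{se:calc}.
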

\begin{proof}
It follows from Theorem \ref{th:species} that for fixed $j$ we
need to maximise $|s_m([M_j])|$ over the core-bounded 
species $s_m$. By the discussion above,
\[ s_m([M_j])=s_m(f_j([M_2])=f_j(s_m([M_2]))=f_j(2 \cos (m \pi / p))
=U_{j-1}(\cos m \pi / p) = \frac{\sin (jm\pi/p)}{\sin (m\pi/p)}. \]
Express the sines in terms of $e^{im \pi/p}$, expand as a geometric
series and pair conjugate terms. If $j$ is odd, say $j=2r+1$, the
result is $1+ \sum_{s=1}^r \cos (sm \pi/p)$; the case when $j$
is even is similar and is left to the reader.

Thus we want to maximise the sum of the elements of a certain
class of $r$-element subsets of $\{ \cos (t \pi /p) : 1 \le t \le p-1\}$.
Clearly, the maximum over all $r$-element subsets is obtained
by choosing the $r$ largest elements, i.e., $t= 1, \ldots ,r$, which
in our case is attained with $m=1$.

Note that $m=1$ yields the maximum for all the $M_j$. Since
each $s_1([M_j])$ is a positive number, $s_1$ also yields the
maximum on all sums of the $M_j$, i.e., on all modules.
We have $\npj_G(M)= s_1([M])$ for all modules and the last part
of the theorem follows.
\end{proof}

\section{Methods of Calculation}\label{se:calc}

We recall some basic facts from Banach theory that we will use.

A linear operator (i.e., linear map) $T$ from a Banach space $B$
to itself is said to be bounded if
$\|T \|_{\op} := \sup \{ \|Tx \| : \|x \| =1 \}$ is finite.
The space of all such bounded operators forms a Banach space
$\mathcal B (B)$ with norm $\| \cdot \|_{\op}$. If $B$ is finite
dimensional then $\sigma(T)$ is just the finite set of eigenvalues,
so the set of roots of the characteristic polynomial of $T$, and
$\rho(T)$ is the largest of the absolute values of these.

If we start with a Banach algebra $A$, then any $a \in A$ yields
a bounded linear operator $T_a \in \mathcal B (A)$ by $T_ax=ax$
for $x \in A$. It is easy to check that $\| T_a \|_{\op} = \| a \|$;
we will usually omit the subscript $\op$ and often identify $T_a$
with $a$. The spectra might differ, but it follows from
Proposition~\ref{pr:Gelfand} that the spectral radii agree:
$\rho(T_a)=\rho(a)$.

We saw in Lemma~\ref{le:s-Omega} that for a core-bounded
species $s$ we have $s([\Omega k])= \lambda$ for some
complex number $\lambda$ with $|\lambda|=1$. It follows
that $s$ vanishes on the ideal of $\Ahat(G)$ generated by
$[\Omega k] - \lambda [k]$ and so factors through the quotient
Banach algebra by the closure of this ideal, which we denote by
$\Ahat(G)/(\Omega-\lambda)$. Thus we can find
$\npj_G(M)=\rho(T_M)$ by calculating it on each of these
quotients and taking the maximum value.

Because of the form of the definition of $\npj_G(M)$, we can
calculate it on any subalgebra of $\Ahat(G)$ that contains all
the tensor powers of $M$ and similarly for
$\Ahat(G)/(\Omega-\lambda)$. If we consider the operator
$T_M$, we can even restrict to any Banach subspace that
contains some tensor power $M^{\otimes n}$ of $M$ and that is
closed under tensor product with  $M^{\otimes n}$, by
Theorem~\ref{th:omni}\,(xiv).

Our strategy will be to  use these observations to reduce
to the finite dimensional case.

Of course, an operator $A$ on a finite dimensional vector
space with given basis can be represented by a square matrix
$(A_{i,j})$. There are various possible norms that we can
use on matrices. One is the operator norm induced from
a norm on the vector space. Another, which we will use
later, is $\| A \|_{\max} = \max_{i,j} \{ |A_{i,j}|\}$; this is a
Banach space norm, but it is not submultiplicative.
However, any two vector space norms on a finite dimensional
vector space, in this case the vector space of $n \times n$
matrices, are commensurate, i.e., there is a positive real
number $c$ such that $c^{-1} \| x \|_1 \le \| x \|_2 \le c \| x \|_1$
for all $x$. It follows that $\lim_{n \to \infty} \sqrt[n]{\| x \|}$
is independent of the norm, so yields $\rho(x)$, regardless
of whether the norm is submultiplicative or not.

Note that if the entries $A_{i,j}$ in the matrix for $A$ are
integers then $\rho(A)$ must be an algebraic integer.

The following standard lemma will be useful later, when
we look more closely at the quotient $\Ahat(G)/(\Omega-\lambda)$.
For a matrix $B$, we write $|B|$ for the matrix of absolute
values of the entries of $B$. If $A_1$ and
$A_2$ are matrices of the same size with real entries, we write $A_1\le A_2$
to indicate that each entry of $A_1$ is
less than or equal to the corresponding entry of $A_2$.

\begin{lemma}\label{le:Omega-eq-1}
Let $A$ be a square matrix with non-negative real entries,
and let $B$ be a complex matrix of the same size satisfying $|B|\le A$.
Then $\rho(B)\le \rho(A)$.
\end{lemma}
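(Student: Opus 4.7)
The plan is to reduce the statement to an entrywise matrix inequality and then apply Gelfand's spectral radius formula in the form that works for any norm on finite dimensional matrices.

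First, I would prove by induction on $n\ge 1$ that $|B^n|\le A^n$ entrywise, where $|B^n|$ denotes the matrix of absolute values of the entries of $B^n$. The base case $n=1$ is the hypothesis. For the inductive step, for any indices $i,j$,
\[
|B^n|_{ij} \;=\; \Bigl|\sum_k (B^{n-1})_{ik} B_{kj}\Bigr|
\;\le\; \sum_k |(B^{n-1})_{ik}|\,|B_{kj}|
\;\le\; \sum_k (A^{n-1})_{ik} A_{kj} \;=\; (A^n)_{ij},
\]
using the triangle inequality together with non-negativity of the entries of $A$ and $A^{n-1}$ (the latter following from the former by the same induction).

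Next I would use the max-norm $\|\cdot\|_{\max}$ from the discussion preceding the lemma. If $|X|\le Y$ entrywise and $Y$ has non-negative real entries, then clearly $\|X\|_{\max}\le \|Y\|_{\max}$. Applying this with $X=B^n$ and $Y=A^n$ gives
\[
\|B^n\|_{\max} \;\le\; \|A^n\|_{\max}
\]
for every $n\ge 1$.

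Finally, taking $n$th roots and passing to the limit, the observation made just before the lemma says that $\lim_{n\to\infty}\sqrt[n]{\|X^n\|}$ is independent of the choice of norm on the finite dimensional matrix algebra (because any two such norms are commensurate), and equals the spectral radius $\rho(X)$. Hence
\[
\rho(B) \;=\; \lim_{n\to\infty}\sqrt[n]{\|B^n\|_{\max}}
\;\le\; \lim_{n\to\infty}\sqrt[n]{\|A^n\|_{\max}} \;=\; \rho(A),
\]
as desired. The only mildly delicate point is the choice of norm: the operator norm is not automatically monotone under entrywise absolute-value comparison, so one wants a coordinate-style norm like $\|\cdot\|_{\max}$ (or the $\ell^1$-norm on entries), and then one appeals to the norm-independence of the Gelfand limit to conclude in terms of the spectral radius. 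There is no substantive obstacle beyond arranging this bookkeeping.
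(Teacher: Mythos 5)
Your proof is correct and follows essentially the same route as the paper's: establish $|B^n|\le A^n$ entrywise, deduce $\|B^n\|_{\max}\le\|A^n\|_{\max}$, and take $n$th roots, appealing to the norm-independence of the Gelfand limit discussed just before the lemma. The only difference is that you spell out the induction behind $|B^n|\le A^n$, which the paper compresses into the observation $|XY|\le|X|\,|Y|$.
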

\begin{proof}
We have in general $|XY| \le |X| \, |Y|$ whenever the product 
is defined, and hence
$|B^n|\le A^n$. Thus $\| B^n \|_{\max} \le \| A^n \|_{\max}$.
Taking $n$th roots and then the limit as $n$ tends to infinity
yields the result.
\end{proof}

Finally, we formulate a result that depends heavily on the fact
that our norm is additive on modules; it could be generalised
to a Banach lattice with a norm that is additive on the positive cone.

\begin{prop}\label{pr:subop}
Suppose that we have a bounded operator $T$ on $\Ahat(G)$
that takes $kG$-modules to $kG$-modules and for some $m \in \mathbb N$
we have $kG$-modules $S_1, \ldots ,S_m$ and $Y_1, \ldots ,Y_m$ with
none of the $S_i$ projective. Suppose that there are non-negative
integers $A_{i,j}$ such that
\[ T[S_i] = \sum_j A_{i,j}[S_j] + [Y_i], \quad i = 1, \ldots , m \]
and consider the matrix $A = (A_{i,j})$. Then $\rho (T) \ge \rho (A)$.
\end{prop}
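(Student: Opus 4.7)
The plan is to iterate the given relation $T[S_i] = \sum_j A_{i,j}[S_j] + [Y_i]$ to express $T^n[S_i]$ in a form where the submatrix $A^n$ appears explicitly, and then to use the fact (built into our chosen norm) that $\|\cdot\|$ is \emph{additive} on non-negative integer combinations of module classes. Concretely, by induction on $n$, I would show that
\[ T^n[S_i] = \sum_j (A^n)_{i,j}[S_j] + [Z_i^{(n)}], \]
where $Z_i^{(n)}$ is itself (the class of) a $kG$-module. The inductive step uses that $T$ takes modules to modules, so $T[Y_i]$ is a module class, and the non-negative combinations $\sum_j A_{i,j}[Y_j]$, $\sum_j A_{i,j}[Z_j^{(n-1)}]$ etc. correspond to direct sums of genuine modules.

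Next I would exploit the norm formula $\|\sum_\alpha a_\alpha[M_\alpha]\| = \sum_\alpha |a_\alpha| \dim\core_G(M_\alpha)$ for indecomposable $M_\alpha$. Decomposing each $[S_j]$ and $[Z_i^{(n)}]$ into indecomposable classes, all the resulting coefficients are non-negative integers, so the norms add with no cancellation:
\[ \|T^n[S_i]\| = \sum_j (A^n)_{i,j}\dim\core_G(S_j) + \dim\core_G(Z_i^{(n)}) \ge \sum_j (A^n)_{i,j}\dim\core_G(S_j). \]
Because no $S_j$ is projective, $\dim\core_G(S_j)\ge 1$ for every $j$, which gives the clean lower bound $\|T^n[S_i]\|\ge (A^n)_{i,k}$ for every $k$, hence $\max_i \|T^n[S_i]\| \ge \|A^n\|_{\max}$.

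On the other hand, $\|T^n[S_i]\|\le \|T^n\|_{\op}\,\|[S_i]\| = \|T^n\|_{\op}\dim\core_G(S_i)$. Setting $C=\max_i \dim\core_G(S_i)$, combining these two inequalities yields
\[ \|T^n\|_{\op}\ge \frac{1}{C}\|A^n\|_{\max}. \]
Taking $n$th roots and the limit as $n\to\infty$, the factor $\sqrt[n]{1/C}$ tends to $1$. By the spectral radius formula (Proposition~\ref{pr:Gelfand}) applied in $\mathcal B(\Ahat(G))$, the left side tends to $\rho(T)$, while by Gelfand's formula applied to the finite-dimensional matrix algebra (and the equivalence of all norms on that space, as noted in the preceding section), the right side tends to $\rho(A)$. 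This gives $\rho(T)\ge \rho(A)$.

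The only step that needs any care is the first one: one must verify that $Z_i^{(n)}$ really is the class of an honest module, not merely a virtual combination, so that additivity of the norm applies without absolute values interfering. This is where the hypothesis that $T$ maps $kG$-modules to $kG$-modules is essential, together with the fact that the coefficients $A_{i,j}$ are non-negative integers so that repeated application of the recursion never introduces negative terms. Once this is in hand, everything else is a straightforward combination of norm additivity and Gelfand's formula.
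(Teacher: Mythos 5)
Your proposal is correct and follows essentially the same route as the paper's proof: iterate the relation to get $T^n[S_i]=\sum_j (A^n)_{i,j}[S_j]+[Z_{i,n}]$ with $Z_{i,n}$ a genuine module, use additivity of the norm on non-negative combinations of modules together with $\|[S_j]\|\ge 1$ to bound $\|T^n\|_{\op}$ below by a constant multiple of $\|A^n\|_{\max}$, and conclude via Gelfand's formula and the equivalence of norms in finite dimensions. The only cosmetic difference is that the paper bounds $\|T^n[S_i]\|$ below by $\max_j (A^n)_{i,j}$ rather than by the full sum, which makes no difference to the conclusion.
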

\begin{proof}
By induction, for any $n\ge1$ there are $kG$-modules $Z_{i,n}$ such that
\[ T^n[S_i] = \sum_j (A^n)_{i,j}[S_j] + [Z_{i,n}], \quad i = 1, \ldots, m. \]	
Note that this does not require the $[S_i]$ to be linearly
independent. Because the norm is additive on sums of modules
with non-negative coefficients,
we obtain
\[ \| T^n \|_{\op} \|[S_i]\| \ge \| T^n [S_i] \| =
\| \sum_j (A^n)_{i,j} [ S_j ] + [ Z_{i,n}] \| \ge
\max_j \{  (A^n)_{i,j} \}  \| [S_j] \| \ge \max_j \{  (A^n)_{i,j} \}, \]
since $S_j$ is not projective, so $\| [S_j] \| \ge 1$.	It follows that for some $i$ we have
\[ \| T^n \|_{\op}  \| [S_i] \|  \ge \max_{i,j} \{  (A^n)_{i,j} \}
= \| A^n \|_{\max}. \]
Taking $n$th roots and then the limit as $n$ tends to infinity
yields $\rho(T)\ge \rho (A)$.
\end{proof}

\section{Modules $M$ with $\npj_G(M)=\sqrt{2}$}\label{se:sqrt2}

In Theorem~\ref{th:sqrt2}, we showed that if $M$ is neither projective
nor endotrivial then we have $\npj_G(M)\ge\sqrt{2}$. In this section, we
investigate the case of equality.

\begin{lemma}\label{le:EN}
For any $kG$-module we have:
\begin{enumerate}
\item
If $M = E \oplus N$ with $E$ endotrivial and
$\npj_G(M \otimes M^*) < 4$ then $N$ is projective.
\item
If $M \otimes M^*$ is endotrivial then so is $M$.
\end{enumerate}
\end{lemma}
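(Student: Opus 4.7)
For part (i), I would argue by contrapositive: assume $N$ is not projective and derive $\npj_G(M\otimes M^*)\ge 4$. Expanding
\[ M\otimes M^* \cong (E\otimes E^*)\oplus (E\otimes N^*)\oplus (N\otimes E^*)\oplus (N\otimes N^*) \]
and using $E\otimes E^*\cong k\oplus\proj$, I see that $M\otimes M^*$ contains $k\oplus L\oplus L^*\oplus(N\otimes N^*)$ as a direct summand, where $L:=E\otimes N^*$ (so $L^*\cong N\otimes E^*$). Endotriviality of $E$ also computes
\[ L\otimes L^*\cong(E\otimes E^*)\otimes(N\otimes N^*)\cong(N\otimes N^*)\oplus\proj. \]
The key step is then to apply Theorem~\ref{th:ge-m} to the quadruple $k,L,L^*,N\otimes N^*$: their tensor product is $(N\otimes N^*)^{\otimes 2}$ up to projective summands, which is non-projective because $N\otimes N^*$ is non-projective (Lemma~\ref{le:MM*}, using that $N$ is non-projective) and no tensor power of a non-projective module is projective (Lemma~\ref{le:powerproj}). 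Theorem~\ref{th:ge-m} then yields $\npj_G(k\oplus L\oplus L^*\oplus(N\otimes N^*))\ge 4$, and since this is a direct summand of $M\otimes M^*$, Theorem~\ref{th:seq-sum} gives $\npj_G(M\otimes M^*)\ge 4$, contradicting the hypothesis.

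For part (ii), I would observe that $M\otimes M^*$ being endotrivial reads $(M\otimes M^*)\otimes(M\otimes M^*)^*\cong k\oplus\proj$. Using $(M\otimes M^*)^*\cong M\otimes M^*$ and reordering tensor factors, this becomes $M^{\otimes 2}\otimes(M^{\otimes 2})^*\cong k\oplus\proj$, so $M^{\otimes 2}$ is itself endotrivial. Theorem~\ref{th:endotriv} then gives $\npj_G(M^{\otimes 2})=1$, and Theorem~\ref{th:tensor-powers} gives $\npj_G(M)^2=1$, so $\npj_G(M)=1$. Since $M$ cannot be projective (else $M\otimes M^*$ would be projective rather than endotrivial), a second application of Theorem~\ref{th:endotriv} yields that $M$ is endotrivial.

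The main obstacle is locating the right quadruple of summands for Theorem~\ref{th:ge-m} in part (i): one must notice that including $k$ as one of the four factors is what allows the bound to be pushed from $3$ up to the desired $4$, and one uses the endotriviality of $E$ twice---once to extract the $k$-summand from $E\otimes E^*$ and again to simplify $L\otimes L^*$ to $N\otimes N^*$ modulo projectives. Once these decompositions are in place, the rest is a routine application of the established machinery; part (ii) is then essentially formal given the multiplicativity of $\npj_G$ on tensor powers.
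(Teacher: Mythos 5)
Your proposal is correct. Part (i) is essentially the paper's own argument: the same four-summand decomposition of $M\otimes M^*$ (with $k$ extracted from $E\otimes E^*\cong k\oplus\proj$), the same appeal to Theorem~\ref{th:ge-m}, and the same finish via Lemmas~\ref{le:MM*} and~\ref{le:powerproj}; you merely phrase it contrapositively and handle the leftover projective summand through the split case of Theorem~\ref{th:seq-sum} (one could equally cite Lemma~\ref{le:core}). Part (ii), however, takes a genuinely different route. The paper stays at the module level: endotriviality of $M\otimes M^*$ forces $p\nmid\dim M$, so $k$ splits off $M\otimes M^*$ by Proposition~\ref{pr:AC}\,(i), and endotriviality together with Krull--Schmidt forces the complementary summand to be projective. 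You instead note that $M\otimes M^*$ is self-dual, so its endotriviality says precisely that $M^{\otimes 2}$ is endotrivial, and then run everything through the invariant: Theorem~\ref{th:endotriv} gives $\npj_G(M^{\otimes 2})=1$, Theorem~\ref{th:tensor-powers} gives $\npj_G(M)=1$, and Theorem~\ref{th:endotriv} again gives that $M$ is endotrivial. This is legitimate and there is no circularity, since Theorem~\ref{th:endotriv} is proved earlier (from Theorem~\ref{th:sqrt2}, Theorem~\ref{th:E} and Proposition~\ref{pr:Dade}) without reference to this lemma; it has the appeal of making (ii) a purely formal consequence of the multiplicativity of $\npj_G$ on tensor powers, but the trade-off is that it silently invokes the heavy machinery behind Theorem~\ref{th:endotriv} (Carlson's restriction theorem and Dade's classification of endotrivial modules), whereas the paper's argument for (ii) is a short elementary Krull--Schmidt computation; both versions, of course, operate under the paper's standing assumption that $p$ divides $|G|$.
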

\begin{proof}
For part (i), we have $M \otimes M^* \cong k \oplus (E \otimes N^*)
\oplus (E^* \otimes N) \oplus (N \otimes N^*)$.
Theorem~\ref{th:ge-m} shows that $\npj_G(M \otimes M^*) \ge 4$
unless at least one of the terms in the sum is projective. If this is
the case then the tensor product of the modules in the sum is
projective. This tensor product is $N^{\otimes 2} \otimes
N^{*\,\otimes 2}$ plus projectives.
Lemmas~\ref{le:MM*} and~\ref{le:powerproj} show that if this is
projective then $N$ is projective.

For the second part, notice that if $M \otimes M^*$ is
endotrivial then $p$ cannot divide the dimension of $M$.
Thus $k$ is a summand of $M \otimes M^*$, and since
$M\otimes M^*$ is endotrivial,  the complementary summand
is projective.
\end{proof}

\begin{prop}\label{pr:1+s2}
If $1< \npj_G(M \otimes M^*) < 1+ \sqrt{2}$,
then $p$ divides the dimension of $M$.
\end{prop}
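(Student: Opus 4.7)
The plan is to argue by contradiction: suppose $p \nmid \dim M$ while $1 < \npj_G(M \otimes M^*) < 1+\sqrt{2}$. The first step will be to pin down the structure of $M \otimes M^*$. Since $p \nmid \dim M$, Proposition~\ref{pr:AC}(i) furnishes a decomposition $M \otimes M^* \cong k \oplus X$, and Theorem~\ref{th:k-plus-M} then gives $\npj_G(X) = \npj_G(M \otimes M^*) - 1 \in (0, \sqrt{2})$. The positivity of $\npj_G(X)$ forces $X$ to be non-projective by Lemma~\ref{le:dimM}, and since $\npj_G(X) < \sqrt{2}$ Theorem~\ref{th:sqrt2} then forces $X$ to be endotrivial. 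Hence $\npj_G(X) = 1$ and we are reduced to the case $\npj_G(M \otimes M^*) = 2$ with $X$ endotrivial and non-projective.

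The second step is to restrict to a cyclic subgroup of order $p$. Pick any cyclic $\sigma \le G$ of order $p$, which exists since $p \mid |G|$. Restricting the relation $X \otimes X^* \cong k \oplus \proj$ to $\sigma$ shows that $X{\downarrow_\sigma}$ is endotrivial on $\sigma$; it must be non-projective on $\sigma$, for otherwise $X{\downarrow_\sigma} \otimes X{\downarrow_\sigma}^*$ would be projective, contradicting the non-projective summand $k$. By Proposition~\ref{pr:Dade}, $X{\downarrow_\sigma}$ is stably isomorphic to $\Omega_\sigma^{\epsilon} k$ for some $\epsilon \in \{0,1\}$.

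The final, and hardest, step is the arithmetic obstruction on $\sigma \cong \bZ/p$. Restricting $M \otimes M^* \cong k \oplus X$ to $\sigma$ and using that $k\sigma$-modules are self-dual, I obtain in the stable Green ring $a(\sigma)/a(\sigma,1)$ the identity
\[ [M{\downarrow_\sigma}]^2 \;=\; 1 + [X{\downarrow_\sigma}] \;=\; \begin{cases} 2 & \text{if } \epsilon = 0, \\ 1 + [\Omega_\sigma k] & \text{if } \epsilon = 1. \end{cases} \]
When $p = 2$ this stable ring is simply $\bZ$, and no integer squares to $2$, giving the contradiction at once. For odd $p$ I apply the species $s_1 \colon [M_j] \mapsto \sin(j\pi/p)/\sin(\pi/p)$ from Theorem~\ref{th:Z/p}; since $s_1([\Omega_\sigma k]) = 1$ in both cases, the identity becomes $s_1([M{\downarrow_\sigma}])^2 = 2$, forcing $s_1([M{\downarrow_\sigma}]) = \sqrt{2}$. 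The main obstacle is then the classical cyclotomic fact that $\sqrt{2} \notin \mathbb{Q}(\cos(\pi/p))$ for any odd prime $p$: the field $\mathbb{Q}(\cos(\pi/p))$ is the maximal real subfield of $\mathbb{Q}(\zeta_{2p})$, while $\mathbb{Q}(\sqrt{2})$ has conductor $8$ and hence is contained in $\mathbb{Q}(\zeta_n)$ only when $8 \mid n$, whereas $8 \nmid 2p$ for any odd prime. This yields the required contradiction and completes the proof.
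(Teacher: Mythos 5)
Your proof is correct, and after the common opening move it diverges genuinely from the paper's argument. Both proofs start identically: $M\otimes M^*\cong k\oplus X$ by Proposition~\ref{pr:AC}\,(i), and Theorems~\ref{th:k-plus-M} and~\ref{th:sqrt2} together with Lemma~\ref{le:dimM} force $X$ to be non-projective and endotrivial. From there the paper reduces to the case of $M$ indecomposable over an algebraically closed field with $G$ elementary abelian (using Theorem~\ref{th:E}, Lemma~\ref{le:K} and a d\'evissage via Lemma~\ref{le:EN}), identifies $X\cong\Omega^r k\oplus\proj$ with $r\in\{0,1\}$ by Dade and self-duality, and then rules out $r=0$ by appealing to Theorem~2.1 of Benson and Carlson~\cite{Benson/Carlson:1986a} (the multiplicity of $k$ in $M\otimes M^*$ is exactly one for such $M$) and $r=1$ by a dimension count. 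You instead restrict everything to a cyclic subgroup $\sigma$ of order $p$, obtain $[M{\downarrow_\sigma}]^2=1+[\Omega^\epsilon k]$ in $a(\sigma)/a(\sigma,1)$, and kill this relation by evaluating the species $s_1$ of Section~\ref{se:cyclic}, which yields $s_1([M{\downarrow_\sigma}])^2=2$ against the fact that $s_1$ takes values in $\bZ[2\cos(\pi/p)]$ and $\sqrt{2}\notin\mathbb{Q}(\cos(\pi/p))$ for $p$ odd (the case $p=2$ being immediate in $\bZ$). Your route avoids both the reduction to indecomposable $M$ over an algebraically closed field and the nontrivial Benson--Carlson input, at the cost of one standard cyclotomic fact; it is close in spirit to the remark following Theorem~\ref{th:equiv-sqrt2}, where the paper itself restricts to cyclic subgroups and quotes Section~\ref{se:cyclic}. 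Two small points of presentation: state at the outset that $p$ divides $|G|$ (otherwise the hypothesis $\npj_G(M\otimes M^*)>1$ is vacuous), since Theorem~\ref{th:k-plus-M} requires it; and before invoking Proposition~\ref{pr:Dade} note explicitly, as the paper does in the proof of Theorem~\ref{th:endotriv}, that an endotrivial module is the direct sum of an indecomposable endotrivial module and a projective.
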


\begin{proof}
Suppose that the dimension of $M$ is not divisible
by $p$. By Theorem~\ref{th:E}, without loss of generality we may
assume that $G$ is elementary abelian.
By Lemma~\ref{le:K}, we may also assume that $k$ is algebraically closed.

We first show that we may suppose that $M$ is indecomposable.
Otherwise, choose an indecomposable summand	
$M_1$ of $M$ with dimension not divisible by $p$. Then
$\npj_G(M_1 \otimes M_1^*) < 1 + \sqrt{2}$, and we claim that
$1<\npj_G(M_1 \otimes M_1^*)$ so that we may replace $M$ by $M_1$.
If not, then $M_1 \otimes M_1^*$ is endotrivial by
Theorem~\ref{th:endotriv}, so $M_1$ is endotrivial by
Lemma~\ref{le:EN}\,(ii). Now apply Lemma~\ref{le:EN}\,(i)
to $M$ to see that $M = M_1 \oplus \proj$, so
$\npj_G(M_1 \otimes M_1^*) = \npj_G(M \otimes M^*) >1$, a
contradiction. Thus we may assume that $M$ is indecomposable.
	
We have $M \otimes M^* \cong k \oplus X$ with $X$ non-projective.
So using Theorem~\ref{th:k-plus-M} we have
\[ 1 + \sqrt{2}  > \npj_G(M\otimes M^*)=\npj_G(k\oplus X)=1+\npj_G(X). \]
By Theorem~\ref{th:sqrt2}, the only possibility is $\npj_G(X)=1$,
and by Theorem~\ref{th:endotriv},
$X$ is endotrivial.
By Proposition~\ref{pr:Dade} we have $X\cong \Omega^r k \oplus\proj$.
We know that $X$ is self dual, hence if $G$ is not cyclic we have
$r=0$ and so $M \otimes M^* \cong k \oplus k \oplus \proj$.
This contradicts Theorem~2.1 of
Benson and Carlson~\cite{Benson/Carlson:1986a} (this is where we
need to use the statements that $M$ is
indecomposable of dimension
not divisible by $p$, and $k$ is algebraically
closed). If $G$ is cyclic then $\Omega$ has period two and the
only other possibility is $r=1$. Thus $M \otimes M^* \cong k \oplus
\Omega k \oplus \proj$. This contradicts our assumption that $p$
does not divide the dimension of $M$, so the lemma is proved.
\end{proof}

The next proposition involves a number
$\alpha \approx 2.839286755\dots$, which is the unique real root of
the polynomial $x^3 -4x^2 +4x-2$.

\begin{prop}\label{pr:1.685}
If $M$ has dimension divisible by $p$ and $M \otimes M^* \otimes M \cong M
\oplus M \oplus X$ with $X$ not projective, then $\npj_G(M \otimes M^*)\ge \alpha$.
\end{prop}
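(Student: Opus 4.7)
The plan is to apply Proposition~\ref{pr:subop} with the operator $T = T_N$ (where $N := M\otimes M^*$) and the four modules $S_1 = M$, $S_2 = X$, $S_3 = NX$, and $S_4 = YX$, where $Y := XM^*$.

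First I would derive two supplementary identities in $\Ahat(G)$ by iterating the hypothesis. Tensoring $NM = 2M + X$ with $M^*$ gives $N^2 = 2N + Y$, so that $N^2 X = 2NX + YX$. Tensoring the hypothesis with $X^*$ gives $NY = 2Y + XX^*$, and hence $NYX = 2YX + XX^*X$; here I use the Green-ring equality $MX^* = Y$, which follows from the self-duality $Y^* = Y$ in $\Ahat(G)$ (a consequence of $N^* = N$ together with the identity $N^2 = 2N + Y$). Then I would invoke Proposition~\ref{pr:AC}(iii) applied to $X$, which is valid because $p \mid \dim X = (\dim M)^3 - 2\dim M$, to obtain $XX^*X \supset X \oplus X$ as a direct summand, so that $NYX \supset 2YX \oplus 2X$.

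Before applying Proposition~\ref{pr:subop}, I would verify each $S_i$ is non-projective. $M$ and $X$ are non-projective by the hypothesis. If $NX$ were projective then $NY = NX \otimes M^*$ would be projective, forcing (via $NY = 2Y + XX^*$) that $XX^*$ is projective and hence $X$ projective by Lemma~\ref{le:MM*}, contradicting the hypothesis. If $YX$ were projective then $NYX$ would be projective, so $XX^*X$ would be projective; but Proposition~\ref{pr:AC}(ii) gives $X$ as a direct summand of $XX^*X$, which would force $X$
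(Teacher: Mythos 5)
Your proposal is essentially the paper's own argument: the paper likewise applies Proposition~\ref{pr:subop} to the operator of tensoring with $N=M\otimes M^*$, with the decisive extra input being Proposition~\ref{pr:AC}\,(iii) applied to $X$ (valid since $\dim X=(\dim M)^3-2\dim M$ is divisible by $p$); the only difference is that the paper works with the three modules $X\otimes M^*$, $X\otimes X^*$, $M\otimes M^*\otimes X\otimes X^*$ rather than your four. All the steps you do show are correct: $N^{\otimes 2}\cong 2N\oplus Y$, the Krull--Schmidt argument giving $Y^*\cong Y$ and hence $M\otimes X^*\cong X\otimes M^*$, the identity $N\otimes Y\cong 2Y\oplus(X\otimes X^*)$, and the non-projectivity checks (and $M$ is indeed non-projective, since otherwise $X$ would be a direct summand of the projective module $M\otimes M^*\otimes M$, and your truncated last sentence is completed by Lemma~\ref{le:MM*}/Proposition~\ref{pr:AC}\,(ii) exactly as you indicate). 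Your text breaks off before the final computation, but it does go through: writing $X\otimes X^*\otimes X\cong X\oplus X\oplus Z$, your relations read $N\otimes S_1\cong 2S_1\oplus S_2$, $N\otimes S_2\cong S_3$, $N\otimes S_3\cong 2S_3\oplus S_4$, $N\otimes S_4\cong 2S_2\oplus 2S_4\oplus Z$, so the matrix of Proposition~\ref{pr:subop} is
\[
A=\begin{pmatrix}2&1&0&0\\0&0&1&0\\0&0&2&1\\0&2&0&2\end{pmatrix},
\]
with characteristic polynomial $(x-2)(x^3-4x^2+4x-2)$, whence $\rho(A)=\alpha$ and $\npj_G(M\otimes M^*)\ge\alpha$; the paper's $3\times 3$ matrix has the same cubic as its characteristic polynomial, so the two systems yield the identical bound.
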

\begin{proof}
The dimension of $X$ is divisible by $p$, so by
Proposition~\ref{pr:AC}\,(ii), $X \otimes X^* \otimes X
\cong X \oplus X \oplus Y$, but we have no information on whether $Y$
is projective. We have
\begin{align*}
(M\otimes M^*) \otimes (X \otimes M^*) &
\cong 2(X\otimes M^*) \oplus  (X \otimes X^*) \\
(M\otimes M^*) \otimes (X \otimes X^*) &
\cong (M\otimes M^* \otimes X \otimes X^*) \\
(M\otimes M^*) \otimes (M\otimes M^* \otimes X \otimes X^*) &
\cong 2(X \otimes M^*)\oplus 2(M \otimes M^* \otimes X \otimes X^*)
\oplus (Y\otimes M^*).
\end{align*}
Set $T=M\otimes M^*$, $S_1=X \otimes M^*$,
$S_2=X \otimes X^*$ and
$S_3=M\otimes M^* \otimes X \otimes X^*$.
None of the $S_i$ are projective.
Indeed, for $S_2$ this is proved in Lemma~\ref{le:MM*}.
Since $S_2$ is isomorphic to a summand of $S_1\otimes M \otimes M^*$, it follows
that $S_1$ is not projective.
Finally, $S_3 \cong S_1 \otimes S_1^*$, so $S_3$ is not
projective by Lemma~\ref{le:MM*} again.

We regard tensoring with $T$ as an operator and,
ignoring $Y\otimes M^*$ for the moment, the above isomorphism
says that the action of $T$ on the ordered set $\{S_1,S_2,S_3\}$
is recorded in the matrix
\[ A=\begin{pmatrix}
 2&1&0 \\ 0&0&1 \\ 2&0&2
\end{pmatrix} .\]
 The characteristic polynomial of the matrix $A$ is 
$x^3-4x^2+4x-2$, so
$\rho(A)= \alpha$.
Applying Proposition~\ref{pr:subop}, we obtain
\begin{equation*}
\npj_G(M \otimes M^*)=\rho(T)\ge \rho (A)= \alpha .
\qedhere
\end{equation*}
\end{proof}

\begin{rk}
The appeal to Proposition~\ref{pr:subop} at the end of the proof
of Proposition~\ref{pr:1.685} can be expressed more
na\"{\i}vely as follows. We have $T\otimes T \cong 2T \oplus
S_1$, $T \otimes S_1 \cong 2S_1 \oplus S_2$, $T\otimes S_2 \cong
S_3$, $T \otimes S_3 \cong 2S_1 \oplus 2S_2 \oplus (Y \otimes M^*)$.
So ignoring some summands,
we see that $T^{\otimes( n+2)}$ has
\[ \left(\begin{array}{ccc} 1&0&0 \end{array}\right)
A^n\left(\begin{array}{c}  S_1\\ S_2 \\ S_3\end{array}\right) \]
as a direct summand. It follows that the number of non-projective direct
summands of $T^{\otimes(n+2)}$ is at least the sum of the entries in
$\left(\begin{array}{ccc} 1&0&0 \end{array}\right)A^n$.
By Frobenius--Perron theory, this number is bounded below by
a constant multiple of $\alpha^n$, since
$\alpha$ is the largest real root of this matrix. So
we have $\npj_G(T)\ge \alpha$.
\end{rk}

\begin{theorem}\label{th:equiv-sqrt2}
For any non-projective $kG$-module $M$, the following conditions are equivalent.
\begin{enumerate}
\item
$1 < \npj_G(M \otimes M^*) < 1+\sqrt{2}$.
\item
$ \npj_G(M \otimes M^*) = 2$.
\item
$M \otimes M^* \otimes M \cong M
\oplus M \oplus \proj$.
\item
$[M \otimes M^*]^2  = 2 [M \otimes M^*]$ in $a(G)/a(G,1)$.
\end{enumerate}
\end{theorem}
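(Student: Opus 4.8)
The plan is to prove the cycle of implications (iii) $\Rightarrow$ (iv) $\Rightarrow$ (ii) $\Rightarrow$ (i) $\Rightarrow$ (iii), using the results already assembled in this section together with the basic machinery from Sections on submultiplicative sequences and Banach algebras.

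First I would handle (iii) $\Rightarrow$ (iv): if $M\otimes M^*\otimes M\cong M\oplus M\oplus\proj$ then tensoring with $M^*$ gives $M\otimes M^*\otimes M\otimes M^*\cong (M\otimes M^*)\oplus(M\otimes M^*)\oplus(\proj\otimes M^*)$, and since $\proj\otimes M^*$ is projective this reads exactly as $[M\otimes M^*]^2=2[M\otimes M^*]$ in $a(G)/a(G,1)$. For (iv) $\Rightarrow$ (ii): apply a core-bounded species $s$ maximising $|s([M\otimes M^*])|$, guaranteed by Theorem~\ref{th:species}; then $s([M\otimes M^*])^2=2\,s([M\otimes M^*])$ forces $s([M\otimes M^*])\in\{0,2\}$, and since $M$ is not projective, $M\otimes M^*$ is not projective by Lemma~\ref{le:MM*}, so $\npj_G(M\otimes M^*)\neq 0$ by Lemma~\ref{le:dimM}, giving $\npj_G(M\otimes M^*)=2$. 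The implication (ii) $\Rightarrow$ (i) is trivial since $2$ lies strictly between $1$ and $1+\sqrt2$.

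The substantive implication is (i) $\Rightarrow$ (iii), and this is where Propositions~\ref{pr:1+s2} and~\ref{pr:1.685} do the work. Assuming (i), Proposition~\ref{pr:1+s2} tells us $p$ divides $\dim M$, so by Proposition~\ref{pr:AC}\,(iii) there is a module $X$ with $M\otimes M^*\otimes M\cong M\oplus M\oplus X$; the goal is to show $X$ is projective. Suppose not. Then Proposition~\ref{pr:1.685} gives $\npj_G(M\otimes M^*)\ge\alpha>1+\sqrt2$ (since $\alpha\approx 2.839>2.414$), contradicting the upper bound in (i). Hence $X$ is projective, which is (iii).

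The main obstacle is purely bookkeeping: one must check that the projectivity of $\proj\otimes M^*$ and the reindexing in (iii) $\Rightarrow$ (iv) really produce the stated polynomial identity in $a(G)/a(G,1)$, and that the numerical inequality $\alpha>1+\sqrt2$ genuinely closes off case (i) — both of which are routine, so the proof is essentially assembled from the earlier results. One should also note at the outset that $M\otimes M^*$ is never projective (Lemma~\ref{le:MM*}), so all four conditions are being compared against a fixed non-projective module, and the Krull--Schmidt theorem is what makes statements like (iii) and the passage to $a(G)/a(G,1)$ unambiguous.
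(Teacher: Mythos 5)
Your proof is correct and follows essentially the same route as the paper: the substantive implication (i)~$\Rightarrow$~(iii) is handled exactly as in the paper, via Proposition~\ref{pr:1+s2} to force $p\mid\dim M$, Proposition~\ref{pr:AC}\,(iii) to produce $X$, and Proposition~\ref{pr:1.685} plus $\alpha>1+\sqrt 2$ to rule out $X$ non-projective, while (iii)~$\Rightarrow$~(iv)~$\Rightarrow$~(ii)~$\Rightarrow$~(i) are the same easy steps. The only cosmetic divergence is that for (iv)~$\Rightarrow$~(ii) you invoke the species machinery of Theorem~\ref{th:species} rather than the more elementary observation that the relation $[T]^2=2[T]$ gives $\cc_n^G(T)=2^{n-1}\cc_1^G(T)$ for $T=M\otimes M^*$, but both are valid and the paper leaves this step unspecified as ``straightforward'' anyway.
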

\begin{proof}
We begin by showing that (i) implies (iii). 
	
If the dimension of $M$ is divisible by $p$ then by
Proposition~\ref{pr:AC} we have
\[ M \otimes M^* \otimes M \cong M \oplus M \oplus X. \]
If $X$ is not projective then by
Proposition~\ref{pr:1.685} we have $\npj_G(M \otimes M^*)\ge \alpha>1+\sqrt{2}$,
contradicting (i).
	
On the other hand, if the dimension of $M$ is not divisible
by $p$ then Proposition~\ref{pr:1+s2} shows that
$\npj_G(M \otimes M^*)$ cannot lie between $1$ and $1+ \sqrt{2}$,
also contradicting (i).

To see that (iii) implies (iv), tensor with $M^*$. Then it is
straightforward to see that (iv) implies (ii) and (ii) implies (i).
\end{proof}

\begin{cor}\label{co:MMM}
If a $kG$-module $M$ satisfies $1<\npj_G(M)<\sqrt{1+\sqrt{2}} \approx 1.553773974\dots$ then
it satisfies the conditions of Theorem~\ref{th:equiv-sqrt2}.
\end{cor}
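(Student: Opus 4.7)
The plan is to verify condition (i) of Theorem~\ref{th:equiv-sqrt2}, namely that $1 < \npj_G(M\otimes M^*) < 1+\sqrt 2$, and then invoke that theorem.

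For the upper bound, I would simply apply the submultiplicativity of $\npj_G$ over tensor products (Theorem~\ref{th:tensor}) together with self-duality (Lemma~\ref{le:dual}) to obtain
\[ \npj_G(M\otimes M^*) \le \npj_G(M)\npj_G(M^*) = \npj_G(M)^2 < 1+\sqrt 2, \]
where the last inequality uses the hypothesis $\npj_G(M) < \sqrt{1+\sqrt 2}$.

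For the lower bound, I would exploit the fact that $\npj_G(M)>1$ forces $M$ to be neither projective (by Lemma~\ref{le:dimM}) nor endotrivial (by Theorem~\ref{th:endotriv}, using that $p$ divides $|G|$, which is implicit since $\npj_G(M)>1$). The first fact, together with Lemma~\ref{le:MM*}, gives that $M\otimes M^*$ is not projective, so $\npj_G(M\otimes M^*)\ne 0$. The second fact, combined with the contrapositive of Lemma~\ref{le:EN}(ii), shows that $M\otimes M^*$ is not endotrivial, so by Theorem~\ref{th:endotriv} we have $\npj_G(M\otimes M^*)\ne 1$. Combined with Lemma~\ref{le:dimM}(iii), this yields $\npj_G(M\otimes M^*)>1$.

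With both bounds in hand, condition (i) of Theorem~\ref{th:equiv-sqrt2} is satisfied, and the equivalence in that theorem delivers the other three conditions as well. There is really no obstacle here; the corollary is essentially a repackaging of Theorem~\ref{th:equiv-sqrt2}, the point being that the hypothesis $\npj_G(M) < \sqrt{1+\sqrt 2}$ is precisely the numerical bound needed to place $\npj_G(M\otimes M^*)$ strictly below $1+\sqrt 2$ via the trivial submultiplicative estimate.
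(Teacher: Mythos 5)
Your proof is correct and follows essentially the same route as the paper, which deduces condition (i) of Theorem~\ref{th:equiv-sqrt2} from the hypothesis via Theorem~\ref{th:tensor} (with Lemma~\ref{le:dual}) for the upper bound and Lemma~\ref{le:EN}\,(ii) for the lower bound; your write-up simply spells out the endotriviality and projectivity details that the paper leaves implicit.
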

\begin{proof}
The inequalities in the corollary imply those in 
Theorem~\ref{th:equiv-sqrt2}\,(i),
by Theorem~\ref{th:tensor} and Lemma~\ref{le:EN}\,(ii).
\end{proof}

\begin{rks}
\begin{enumerate}
\item
If Conjecture~\ref{conj:EndM} holds, then the converse
of Corollary~\ref{co:MMM} also holds, provided $M$ is not projective.
\item
If $M$ satisfies $M \otimes M^* \otimes M\cong
M \oplus M \oplus \proj$ then the restriction of $M$
to any cyclic subgroup of $G$ of order $p$ is projective. This can be
seen by noting that the module $M$ is self-dual on
restriction to any such cyclic subgroup $C$ of $G$,
and the relation $[M]^3=2[M]$ in $a(G)/a(G,1)$
yields that $\npj_C(M)$ is equal to $0$ or $\sqrt{2}$; the latter value can
be seen to be impossible from the calculations in
Section~\ref{se:cyclic}. It is not clear whether
there are any such modules except in the case that $p=2$ and a Sylow
$2$-subgroup of $G$ is isomorphic to $\bZ/2 \times \bZ/2$.
\end{enumerate}
\end{rks}

We end this section with a related conjecture, based on the
known examples of modules $M$ with $\npj_G(M)<2$. Such
modules are quite hard to construct.

\begin{conj}
If $M$ is a $kG$-module with $\npj_G(M)<2$ then
$\npj_G(M)=2\cos (\pi/m)$ for some integer $m\ge 2$.
\end{conj}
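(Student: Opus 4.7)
The natural target is J.\,H.\ Smith's classical theorem: a connected graph with spectral radius strictly less than $2$ is a Dynkin diagram of type $A_n$, $D_n$, $E_6$, $E_7$ or $E_8$, whose adjacency matrix has spectral radius $2\cos(\pi/h)$ for $h$ the Coxeter number. The plan is to realise $\npj_G(M)$, or a simple function of it, as the spectral radius of such an adjacency matrix.

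First I would reduce to $G=E$ elementary abelian and $k$ algebraically closed, via Theorem~\ref{th:E} and Lemma~\ref{le:K}. Since $\npj_G(M^*)=\npj_G(M)$ (Lemma~\ref{le:dual}) and the multiplication operators $T_{[M]}$ and $T_{[M^*]}$ on $\Ahat(E)$ are mutually adjoint with respect to the symmetric bilinear form coming from dimensions of stable homomorphisms (Frobenius reciprocity for the self-injective algebra $kE$), I would pass to the self-dual module $\tilde M = M\oplus M^*$ in order to work with a self-adjoint operator; by Theorem~\ref{th:omni}(x) we have $\npj_G(\tilde M)<4$.

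The crucial step is then to exhibit a finite set $\mathcal F=\{S_1,\dots,S_r\}$ of non-projective indecomposable $kE$-modules, containing the summands of some tensor power of $M$, for which the non-negative integer matrix $A=(A_{i,j})$ recording the multiplicities $[\tilde M\otimes S_i:S_j]$ satisfies $\rho(A)=\npj_G(\tilde M)$. Proposition~\ref{pr:subop} already gives $\rho(A)\le\npj_G(\tilde M)$; equality is essentially Conjecture~\ref{conj:er} restricted to this regime. Given such a finite $A$, self-duality of $\tilde M$ makes $A$ symmetric, and the hypothesis $\rho(A)<4$ first excludes large entries and then limits vertex degrees, so that after a standard bipartite/square argument $A$ becomes the adjacency matrix of a simple graph $\Gamma$. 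Smith's theorem then forces each connected component of $\Gamma$ to be a Dynkin diagram, so $\rho(A)=2\cos(\pi/h)$. A concluding Perron--Frobenius argument on the species realising the maximum (Theorem~\ref{th:species}) should recover $\npj_G(M)=2\cos(\pi/m)$ itself.

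The principal obstacle is producing the finite family $\mathcal F$ with matching spectral radius. It is presently unknown whether $\npj_G(M)$ is even algebraic, let alone whether $[M]$ acts with effectively finite-dimensional behaviour on $\Ahat(E)$. A plausible route passes through the stable Auslander--Reiten quiver of $kE$, whose tree class is $A_\infty$: under the assumption $\npj_G(M)<2$ the tensor action of $M$ on the AR-components ought to be severely constrained, ideally forcing a finite sub-tree and hence a finite $\mathcal F$. This would mirror the way the thresholds $\sqrt 2$, $1+\sqrt 2$, and $\alpha$ of the preceding section forced very rigid decomposition patterns on the low tensor powers of $M$.
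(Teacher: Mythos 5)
The statement you are trying to prove is stated in the paper as an open conjecture: the authors offer no proof, only the observation (in the following Remark) that the known examples and thresholds ($0$, $1$, $\sqrt 2$, and the $SL(2,q)$ natural modules with value $2\cos(\pi/q)$) are consistent with it. So there is no argument in the paper to compare yours against, and your proposal, as you yourself acknowledge, is a plan with an unfilled central step rather than a proof.

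Your strategy --- realise the invariant as the norm of a non-negative integer matrix and invoke Smith's classification of graphs of norm less than $2$ --- is indeed the natural one; it is exactly how the analogous statement for Frobenius--Perron dimensions in fusion categories is proved in Chapter~3 of Etingof--Gelaki--Nikshych--Ostrik, which the paper cites. But the gap you flag is the whole content of the conjecture. Producing a finite set $\mathcal F$ of non-projective indecomposables, stable under tensoring with $M$ modulo projectives and with $\rho(A)=\npj_G(M)$, is essentially the assertion that $M$ is Omega-algebraic (or at least satisfies Conjecture~\ref{conj:er}), and the paper exhibits evidence (Example~\ref{eg:non-Omega-alg}) that even four-dimensional modules for $\bZ/3\times\bZ/3$ can fail to be Omega-algebraic; nothing in the hypothesis $\npj_G(M)<2$ is currently known to rule this out. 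Two further steps are also not secure as stated. First, the claim that $T_{[M]}$ and $T_{[M^*]}$ are mutually adjoint requires an inner product on (a completion of) $A(G)/A(G,1)$ that the paper never constructs; the norm used there is an $\ell^1$-type norm weighted by dimensions, and $\npj_G(M)$ is a spectral radius in that Banach algebra, not an $\ell^2$ operator norm, so the identification of $\npj_G(\tilde M)$ with the largest eigenvalue of a symmetric matrix does not come for free. Second, even granting a finite symmetric non-negative integer matrix $A$ with $\rho(A)<4$, such a matrix need not be the adjacency matrix of a simple graph (entries and diagonal terms up to $3$ are possible); the correct classical reduction is to apply Smith's theorem to the bipartite double $\begin{pmatrix}0&N\\ N^{T}&0\end{pmatrix}$ of the (not necessarily symmetric) matrix $N$ of $T_{[M]}$, using $\rho(NN^{T})<4$, rather than to $A$ itself. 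In short: the skeleton is the right one, but the load-bearing finiteness step is precisely what is open, and the paper leaves it open too.
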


\begin{rk}
The case $m=2$ gives $\npj_G(M)=0$,  $m=3$ gives
$\npj_G(M)=1$, and the case $m\ge 4$ gives
$\npj_G(M)\ge\sqrt{2}$.
This fits well with Theorems~\ref{th:sqrt2},~\ref{th:endotriv} and~\ref{co:MMM}.
The two dimensional representation of $\bZ/p$ in characteristic $p$
is an example with $\npj_G(M)=2\cos(\pi/p)$, see Theorem~\ref{th:Z/p}.
It follows from the computations in Alperin~\cite{Alperin:1976b}
and Section~3 of Craven~\cite{Craven:2013a} that if
$M$ is the two dimensional natural module for $SL(2,q)$ with $q$
a prime power, then $M$ is algebraic and $\npj_G(M)=2\cos(\pi/q)$.
This can also be seen by using the weight theory of tilting modules.
\end{rk}

\begin{question}
What are the general properties of modules $M$ with $\npj_G(M)<2$?
\end{question}

\section{Eventually Recursive Functions}\label{se:recursive}

\begin{defn}
We say that a function $\phi\colon\bZ_{\ge 0} \to \bZ_{\ge 0}$ is
\emph{eventually recursive} of degree $d$ if there exists
a homogeneous linear recurrence relation with constant coefficients,
in other words a recurrence relation of the form
\[ \phi(n)+ a_{1}\phi(n-1)+\dots + a_{d}\phi(n-d) = 0 \]
with $a_i\in\bZ$, which is satisfied for all large enough integers
$n$. The recurrence relation of minimal degree eventually satisfied
by $\phi(n)$ is uniquely determined, and the corresponding
polynomial
\[ z^d + a_1z^{d-1} + \dots + a_d \]
has $a_d \ne 0$ and is called the \emph{characteristic polynomial} of the recurrence
relation.
\end{defn}

The following is a standard theorem from the theory of recurrence relations.

\begin{theorem}\label{th:er}
If $\phi$ is eventually recursive and $r$ is the radius of convergence
of the generating function $f(t)=\sum_{n=0}^\infty \phi(n)t^n$
then $1/r$ is an algebraic integer. It is the largest positive real root of the
characteristic polynomial.
\end{theorem}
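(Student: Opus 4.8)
The plan is to combine the classical description of the generating function of a linear recurrence as a rational function with Pringsheim's theorem (Theorem~\ref{th:Pringsheim}) and the Cauchy--Hadamard formula (Lemma~\ref{le:radius}). I would first dispose of the degenerate case in which $\phi$ is eventually zero: then $f(t)$ is a polynomial, there is no characteristic polynomial with $a_d\ne 0$, and there is nothing to prove; so assume from now on that $\phi$ is not eventually zero.

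Next I would set up notation and observe that $f(t)$ is rational. Let $\phi(n)+a_1\phi(n-1)+\dots+a_d\phi(n-d)=0$ be the minimal recurrence, with characteristic polynomial $\chi(z)=z^d+a_1z^{d-1}+\dots+a_d$ and $a_d\ne 0$, and put $q(t)=1+a_1t+\dots+a_dt^d\in\bZ[t]$, which is the reciprocal polynomial $t^d\chi(1/t)$. Since $a_d\ne 0$ we have $\deg q=d$ and $q(0)=1$, and $z\mapsto 1/z$ is a bijection between the roots of $\chi$ and those of $q$ (with multiplicities), neither polynomial having $0$ as a root. Multiplying the series by $q(t)$, the coefficient of $t^n$ in $q(t)f(t)$ equals $\sum_{i=0}^{d}a_i\phi(n-i)$ (with $a_0=1$), which vanishes for all large $n$; hence $q(t)f(t)=p(t)$ for some $p(t)\in\bZ[t]$, the coefficients being integers because $\phi$ and the $a_i$ are. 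So $f=p/q$.

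The step I expect to be the main obstacle is showing that $p$ and $q$ are coprime, i.e. that $q$ really is the denominator of $f$ in lowest terms; this is precisely where minimality of the recurrence is used, and it is needed to rule out ``spurious'' roots of $\chi$ of large modulus that do not correspond to poles of $f$. I would reduce $f=p/q$ to lowest terms over $\mathbb{Q}[t]$, writing $p=hp_1$ and $q=hq_1$ with $h$ a greatest common divisor and $p_1,q_1\in\bZ[t]$ coprime (using Gauss's lemma, and normalising so that $q_1(0)=1$, which is possible since $q(0)=1$). Then $q_1(t)f(t)=p_1(t)$ exhibits a linear recurrence for $\phi$ with integer coefficients, namely those of $q_1$, of degree $\deg q_1\le\deg q=d$ and valid for all large $n$. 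By minimality of the original recurrence, $\deg q_1=d$, so $h$ is a constant; thus $p$ and $q$ are coprime, and the reciprocal polynomial of the reduced denominator of $f$ is exactly $\chi$.

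It then remains to identify $1/r$. Since $f=p/q$ with $p,q$ coprime and $q(0)\ne 0$, $f$ is holomorphic away from the finitely many roots of $q$, at each of which it has a genuine pole; hence its Taylor expansion $\sum_n\phi(n)t^n$ at $0$ has radius of convergence equal to the distance from $0$ to the nearest root of $q$. Equivalently, by Lemma~\ref{le:radius}, $1/r=\limsup_n\sqrt[n]{\phi(n)}$ is the largest absolute value of a root of $\chi$, which already forces $1/r$ to be an algebraic integer, $\chi$ being monic with integer coefficients. Finally, since every $\phi(n)\ge 0$, Pringsheim's theorem says $t=r$ is a singular point, hence a pole, of $f$, so $r$ is a root of $q$ and therefore $1/r$ is a root of $\chi$. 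A root of $\chi$ of maximal absolute value which happens to be a positive real number is in particular the largest positive real root of $\chi$, which completes the argument.
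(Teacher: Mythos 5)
Your proof is correct and follows essentially the same route as the paper's: express $f$ as a rational function $p/q$ with $q$ the reciprocal polynomial of $\chi$, then invoke Pringsheim. Where you go beyond the paper is in carefully establishing that $p$ and $q$ are coprime via minimality of the recurrence. The paper passes over this, simply asserting that the poles of $f$ are the $1/\lambda_i$; but coprimality is genuinely needed for the ``largest positive real root'' part of the conclusion, since a spurious root of $\chi$ cancelled by $p$ could in principle have larger modulus than $1/r$ without being a pole of $f$. Your Gauss's-lemma argument that $\deg q_1 = \deg q$ by minimality, and your normalisation $q_1(0)=1$ to keep integer coefficients, settle this cleanly, so on this point your write-up is tighter than the source.

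One small slip in exposition: you claim that $1/r$ being the largest absolute value of a root of $\chi$ ``already forces $1/r$ to be an algebraic integer.'' That step is not yet available at that point in your argument. The maximum of $|\lambda_i|$ over the roots $\lambda_i$ of a monic integer polynomial need not itself be an algebraic integer when the dominant root is non-real; you only know $1/r$ is an algebraic integer once Pringsheim has produced a positive real root of $q$ at $t=r$, which you do in the very next sentence. So the content is all there, but the order of claims should be reversed: apply Pringsheim first to get $1/r\in\sigma$ of $\chi$, and only then conclude both that $1/r$ is an algebraic integer and, using its maximality and positivity, that it is the largest positive real root.
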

\begin{proof}
It is easy to check that $(1+a_1t + \dots + a_dt^d)f(t)$
is a polynomial in $t$, and therefore $f(t)$ is a rational
function with denominator equal to $1+a_1t+\dots+a_dt^d$.
The poles of this rational function are the $1/\lambda_i$ where
the roots of the characteristic polynomial are $\lambda_i$.
The radius of convergence $r$ is therefore the smallest of the
$1/|\lambda_i|$.
Now apply Pringsheim's Theorem~\ref{th:Pringsheim} to $f(t)$ to see
that $r$ is a root of the characteristic polynomial.
\end{proof}

Based on a large number of computations using \textsf{Magma},
some of which are presented in Section~\ref{se:eg},
we make the following conjecture.

\begin{conj}\label{conj:er}
If $M$ is a $kG$-module then the function $\phi(n)=\cc_n^G(M)$
is eventually recursive.
\end{conj}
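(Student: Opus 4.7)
The plan is to establish the conjecture by showing that the generating function $f_M(t) = \sum_n \cc_n^G(M)\,t^n$ is a rational function with integer coefficients, since a non-negative integer sequence is eventually recursive if and only if its ordinary generating function is rational.

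First, dispose of the case where $M$ is algebraic. Let $N_1,\dots,N_r$ be representatives of the finitely many isomorphism classes of non-projective indecomposable summands appearing in the tensor powers $M^{\otimes n}$, together with $[k]$. In the ring $a(G)/a(G,1)$, multiplication by $[M]$ preserves the finitely generated free $\bZ$-module $V = \bigoplus_i \bZ[N_i]$, and so acts as an integer matrix $A$. By Cayley--Hamilton, $A$ satisfies its monic integer characteristic polynomial, which immediately yields an integer linear recurrence satisfied by the vector of multiplicities $m_i(n)$ of the $N_i$ in $\core_G(M^{\otimes n})$. Since $\cc_n^G(M) = \sum_i (\dim N_i)\,m_i(n)$ is an integer linear combination of these multiplicities, it satisfies the same recurrence.

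For the non-algebraic case, the natural finite-dimensional invariant subspace is no longer available. One strategy is to work inside the Banach algebra $\Ahat(G)$: consider the closed unital subalgebra $B \subseteq \Ahat(G)$ generated by $[M]$, a commutative Banach algebra whose Gelfand spectrum $\Sigma(B)$ is a compact subset of $\bC$ parametrized by the values $s([M])$ for $s$ a core-bounded species. One would like to prove that $[M]$ satisfies a polynomial identity modulo the Jacobson radical of $B$---equivalently, that $\Sigma(B)$ is finite---and then deduce the recurrence by analysis of the nilpotent part. A complementary strategy is to filter $\core_G(M^{\otimes n})$ by vertex: for each $p$-subgroup $Q \le G$, analyse the contribution of summands with vertex $Q$, with the hope that Green correspondence reduces the analysis to the normalisers $N_G(Q)$ and permits an induction on $|G|$.

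The main obstacle is exactly what makes the non-algebraic case interesting: tensor powers genuinely produce infinitely many distinct indecomposable types, with no available classification in the wild case. All the tools developed earlier in the paper---submultiplicativity, Gelfand's spectral radius formula, reduction to elementary abelians via Theorem~\ref{th:E}---capture only the exponential growth rate $\npj_G(M)$, not the finer rational structure of $f_M(t)$. What is really needed is an additional piece of structure forcing rationality; one speculative candidate is a functional equation for $f_M(t)$ derived from the identities of Proposition~\ref{pr:AC} together with the self-duality $\cc_n^G(M^*) = \cc_n^G(M)$. However, I do not see how to make this precise, which is presumably why the authors state this only as a conjecture.
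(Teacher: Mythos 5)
The statement you were asked to prove is labelled a \emph{conjecture} in the paper: the authors do not prove it, and they offer it only on the strength of extensive machine computation. So there is no ``paper's proof'' to compare against, and your honest conclusion that the general case is out of reach is exactly the authors' own position. What you \emph{have} proved is the algebraic case, and this is precisely the content of the paper's Remark following the conjecture (``If $M$ is an algebraic module then Conjecture~\ref{conj:er} holds for $M$''), stated there without proof. Your Cayley--Hamilton argument supplies the standard justification correctly: the classes of the finitely many non-projective indecomposable summands of the tensor powers, together with $[k]$, span a finite rank free $\bZ$-submodule of $a(G)/a(G,1)$ stable under multiplication by $[M]$ (since any indecomposable summand of $M\otimes N_i$ with $N_i\mid M^{\otimes n}$ is a summand of $M^{\otimes(n+1)}$, and these classes are linearly independent because non-projective indecomposables form a basis of $a(G)/a(G,1)$); the characteristic polynomial of the resulting integer matrix gives a recurrence for the multiplicity vector of $[M]^n$ applied to $[k]$, hence for $\cc_n^G(M)=\sum_i m_i(n)\dim N_i$. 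This also recovers, via Theorem~\ref{th:er}, that $\npj_G(M)$ is an algebraic integer in the algebraic case.

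For the non-algebraic case your two strategies do not close the gap, and for identifiable reasons. Finiteness of the Gelfand spectrum of the closed subalgebra generated by $[M]$ is not expected in general: already for an Omega-algebraic but non-algebraic module the tensor powers realise infinitely many syzygies $\Omega^m(M_j)$, and the species $s$ can take the values $\lambda^m s([M_j])$ for any $\lambda$ on the unit circle (compare the discussion of $\Ahat(G)/(\Omega-\lambda)$ in Section~\ref{se:calc}), so the spectrum of $[M]$ is typically a union of circles rather than a finite set; moreover even a finite spectrum would only control growth rates, not integrality of a recurrence, because the Jacobson radical of $\Ahat(G)$ can contain non-nilpotent quasi-nilpotent elements. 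The vertex filtration is also problematic because Green correspondence does not interact well with tensor products of correspondents. The one tractable intermediate case the paper isolates is that of Omega-algebraic modules (Section~\ref{se:Omega}), where a finite matrix over $\bZ[\Omega,\Omega^{-1}]$ controls the tensor powers; even there the authors state eventual recursiveness only as a (weaker) conjecture, essentially because the entries involve $\Omega$ and one must track dimensions of syzygies, which grow polynomially. In short: your special case is correct and coincides with what the paper asserts, and the remainder is genuinely open.
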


\begin{rks}\label{rks:recur}
\begin{enumerate}
\item
If $M$ is an algebraic module then Conjecture~\ref{conj:er} holds
for $M$.
\item
If Conjecture~\ref{conj:er}
holds for a module $M$, then by Theorem \ref{th:er},
$\npj_G(M)$ is an algebraic integer.
\item
In Example \ref{eg:3x3a},
the minimal equation for $M$ modulo projectives is $x^2(x-2)^3=0$.
The largest solution to this has $|x|=2$, and so $1/r=\npj_G(M)=2$.
The associated recurrence relation is $\cc_n^G(M)-8c_{n-3}^G(M)=0$ for
$n\ge 5$.
\item
In Example \ref{eg:3x3b},
the module $M$ is not algebraic, but nonetheless, the relation
\[ M\otimes M \cong M^* \oplus \Omega(M^*) \]
can be used to produce a recurrence relation for $\cc_n^G(M)$ for
large enough $n$.
\end{enumerate}
\end{rks}

The consequence of Conjecture~\ref{conj:er} that 
$\npj_G(M)$ is an algebraic integer is at least 
consistent with the last result of this section.

\begin{prop}\label{prop:countable} 
The invariant $\npj_G(M)$ can take only countably 
many values (for all primes, fields, finite groups and 
finitely generated modules).
\end{prop}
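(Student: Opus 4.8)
The plan is to reduce the problem to a counting argument by observing that $\npj_G(M)$ depends only on a finite amount of data, most of which ranges over a countable set. First I would handle the field: by Lemma~\ref{le:K}, $\npj_G(M)$ is unchanged on extension of scalars, so without loss of generality $k$ may be taken to be either the prime field $\bF_p$ or the algebraic closure $\overline{\bF_p}$; in fact, since $M$ is finite dimensional, it is already defined over a finite extension of $\bF_p$, of which there are only countably many. So the characteristic $p$ (countably many choices), the finite group $G$ (countably many isomorphism types), and the finite field of definition (countably many) together contribute only countably many possibilities.

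Next I would deal with the module itself. Once $p$, $G$ and a finite field $k$ are fixed, a finitely generated $kG$-module is a finite-dimensional $k$-vector space together with a homomorphism $G \to GL(V)$; since $k$ is finite and $\dim V$ ranges over $\bZ_{\ge 0}$, there are only countably many such modules up to isomorphism (indeed, for each fixed dimension there are only finitely many). Therefore the set of all triples $(p, kG, M)$ — equivalently, the set of all isomorphism classes of finitely generated modular representations of finite groups over finite fields — is countable. Since $\npj_G(M)$ is a function defined on this countable set (its value being independent of the choice of representative, by the Krull--Schmidt Theorem, and independent of field extension by Lemma~\ref{le:K}), its image is countable.

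The only subtlety, and the step I expect to require the most care, is the passage from an arbitrary field $k$ of characteristic $p$ to a finite one: one must confirm that every finitely generated $kG$-module is obtained by extension of scalars from a module over a finite subfield. This follows because $M$ is determined by the finitely many matrix entries of the images of a generating set of $G$, and these entries generate a finitely generated, hence finite, subfield of $k$ (as $\bF_p$ is the prime field and any finitely generated algebraic extension of a finite field is finite — and the entries are automatically algebraic over $\bF_p$ since $G$ is finite, so each generator has finite order and hence satisfies a polynomial over $\bF_p$); then Lemma~\ref{le:K} gives $\npj_G(M) = \npj_G(M_0)$ for the descended module $M_0$. Everything else is a routine cardinality bookkeeping argument.
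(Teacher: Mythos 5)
Your reduction to a field of definition is the right first move and matches the paper's, but the key claim that the matrix entries ``are automatically algebraic over $\bF_p$ since $G$ is finite, so each generator has finite order and hence satisfies a polynomial over $\bF_p$'' is false, and this is exactly where the argument breaks. Finite order of $g$ tells you that the \emph{matrix} $\rho(g)$ satisfies $X^n - I = 0$; it imposes no constraint at all on the individual entries of $\rho(g)$. For a concrete counterexample, let $G = \bZ/p \times \bZ/p = \langle g, h\rangle$, let $k = \bF_p(t)$, and take
\[
g \mapsto \begin{pmatrix} 1 & 1 \\ 0 & 1 \end{pmatrix}, \qquad
h \mapsto \begin{pmatrix} 1 & t \\ 0 & 1 \end{pmatrix}.
\]
Both matrices have order $p$, but the entry $t$ is transcendental over $\bF_p$, and this module is not isomorphic to the scalar extension of any module defined over a finite subfield: its rank variety is the $k$-rational point $[1:t]$ of $\mathbb{P}^1$, which is not defined over any finite field. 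So the field generated by the entries is only \emph{finitely generated} over $\bF_p$, not finite, and your passage from ``finitely generated'' to ``finite'' is unjustified. (The earlier assertion that $k$ may be taken to be $\bF_p$ or $\overline{\bF_p}$ has the same problem: Lemma~\ref{le:K} lets you pass up to an extension, not down, and the relevant algebraic closure is that of $\bF_p(t)$, not $\overline{\bF_p}$.)

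The repair is precisely what the paper does: stop at ``finitely generated over $\bF_p$.'' One then needs two additional observations that your proposal omits because it assumed finiteness: (a) there are only countably many isomorphism types of finitely generated fields of characteristic $p$ (add the generators one by one; at each step a generator is either algebraic, with countably many possible minimal polynomials, or transcendental, which determines the extension), and (b) each finitely generated field is itself countable, so for fixed $G$, field, and dimension there are still only countably many matrix representations. With those two points in place, your bookkeeping goes through; without them, the proposal does not establish the proposition.
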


\begin{proof}
We know that $\npj_G$ does not vary with field extension, 
by Lemma~\ref{le:K}, and any representation over any field 
can be realised over a finitely generated field---the subfield 
generated over $\mathbb F_p$ by the entries of the matrices 
in some matrix form of the representation---so we only 
need to consider finitely generated fields. A finitely generated 
field is countable and there are only countably many such fields. 
To see this, add the generators one by one. If a generator is 
algebraic over the field produced at the previous stage then 
there are only countably many possibilities for its minimal 
polynomial and hence for the new field. If it is transcendental, 
the new field is already determined. Thus, for a given group, 
field and dimension, there are only countably many 
representations (consider matrices again) and so only countably 
many possible values of $\npj_G$. There are also only countably 
many possibilities for group, field and dimension.
\end{proof}

This proof also works for any other invariant of $kG$-modules 
that is preserved under field extension.

\section{Omega-Algebraic Modules}\label{se:Omega}

\begin{defn}
A $kG$-module $M$ is called \emph{Omega-algebraic} if the
non-projective indecomposable direct summands
of the modules $M^{\otimes n}$ fall into finitely many orbits of the
syzygy functor $\Omega$.
\end{defn}

An example of an Omega-algebraic module may be found in
Example \ref{eg:3x3b}.
We'll see more examples in the next section, as well as evidence
that not all modules are Omega-algebraic.
A weaker form of Conjecture~\ref{conj:er} is the following.

\begin{conj}
If $M$ is an Omega-algebraic $kG$-module then the 
function $\phi(n)=\cc_n^G(M)$ is eventually recursive.
\end{conj}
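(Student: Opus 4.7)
The plan is to show that the generating function $f_M(t) = \sum_{n \ge 0} \cc_n^G(M) t^n$ is a rational function of $t$, which is equivalent to eventual recursion by Theorem~\ref{th:er}. The two ingredients are a bivariate matrix recursion encoding the $\Omega$-structure of tensor powers of $M$, together with the classical rationality of the Poincar\'e series of each orbit representative.

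Using the Omega-algebraic hypothesis, pick orbit representatives $N_1, \dots, N_r$ for the $\Omega$-orbits of non-projective indecomposable summands of tensor powers of $M$. In the stable module category one then has finite decompositions
\[
M^{\otimes n} \cong \bigoplus_{i, j} m_{n, i, j} \Omega^j N_i,
\]
and the stable decompositions $M \otimes N_i \cong \bigoplus_{k, j'} a_{i, k, j'} \Omega^{j'} N_k$ produce a Laurent-polynomial matrix $P(z) \in \mathrm{Mat}_r(\bZ[z, z^{-1}])$ with $P(z)_{i, k} = \sum_{j'} a_{i, k, j'} z^{j'}$, such that the column vector $\vec{h}_n(z) := \bigl(\sum_j m_{n, i, j} z^j\bigr)_i$ satisfies $\vec{h}_{n+1}(z) = P(z)\vec{h}_n(z)$. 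Hence the two-variable generating function
\[
H_i(z, t) := \sum_{n, j} m_{n, i, j} z^j t^n = \bigl[(I - t P(z))^{-1} \vec{h}_0(z)\bigr]_i
\]
is a rational function of $(z, t)$ over $\bC$.

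By finite generation of $\mathrm{Ext}^*_{kG}(N_i, k)$, the Poincar\'e series $\sum_{j \ge 0}\dim P_j(N_i)\cdot t^j$ of the minimal projective resolution of each $N_i$ is rational; combined with the self-injectivity of $kG$ the same holds on the negative side. Consequently $d_i(j) := \dim \Omega^j N_i$ admits a quasi-polynomial expression $d_i(j) = \sum_s p_s(j) \rho_s^j$ for $j$ sufficiently positive, and a (possibly different) quasi-polynomial expression for $j$ sufficiently negative, with only finitely many boundary values in between. Writing $\cc_n^G(M) = \sum_{i, j} m_{n, i, j} d_i(j)$, split according to the sign of $j$ and substitute the quasi-polynomial form. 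For each eigenvalue $\rho_s$ and polynomial $p_s$, the contribution becomes (up to a finite correction) $p_s(z \partial_z) H_i^{+}(z, t)\big|_{z = \rho_s}$, where $H_i^{+}$ denotes the non-negative $z$-power part of $H_i$, itself a rational function of $z$ because $H_i$ has a pole of only finite order at $z = 0$. Evaluation at $z = \rho_s$ (not a pole for generic $t$) then yields a rational function of $t$; the finitely many boundary terms reduce to Laurent coefficients $[z^j] H_i(z, t)$, each of which is rational in $t$ by standard extraction from bivariate rational functions.

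Summing these contributions over $i$, over both signs of $j$, over eigenvalues $\rho_s$, and over boundary indices realizes $f_M(t)$ as a finite sum of rational functions of $t$, hence rational. The main obstacle is the separation into positive and negative syzygies: the two quasi-polynomial formulas for $d_i$ differ, and a coincidence $\rho_s = \alpha(t_0)$ with a pole of $H_i^{\pm}$ could threaten rationality; such coincidences, however, define codimension-one conditions on $t$ and are absorbed into the denominator of $f_M(t)$ after clearing. A secondary technical point is justifying that the positive and negative parts $H_i^{\pm}$ of a bivariate rational function are themselves rational, which rests on the fact that the principal part of a rational function at a fixed pole has finite degree.
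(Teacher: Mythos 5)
This statement is not a theorem of the paper: it is stated as an open conjecture (a weakening of Conjecture~\ref{conj:er}), and the authors offer no proof, so there is nothing to compare your argument with except the machinery of Sections~\ref{se:calc} and~\ref{se:Omega}, which your setup closely mirrors (your matrix $P(z)$ is their $X(\Omega)$). The question is therefore whether your argument closes the gap the authors left open, and it does not.

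The fatal step is the claim that the non-negative $z$-power part $H_i^{+}(z,t)$ of the bivariate rational function $H_i(z,t)=[(I-tP(z))^{-1}\vec h_0(z)]_i$ is again rational, ``because the principal part of a rational function at a fixed pole has finite degree.'' That justification is valid for a univariate rational function of $z$, but here the negative $z$-degree of the coefficient of $t^n$ grows linearly with $n$, and extracting the non-negative part of a bivariate rational series is a diagonal-type operation that in general only preserves algebraicity (Furstenberg), not rationality. A concrete instance of exactly the shape arising here: if tensoring with $M$ acts on a single orbit by $N\mapsto \Omega N\oplus\Omega^{-1}N$, then $H(z,t)=1/(1-t(z+z^{-1}))$, whose constant-in-$z$ term is $\sum_m\binom{2m}{m}t^{2m}=(1-4t^2)^{-1/2}$ and whose non-negative part evaluated at $z=1$ likewise involves $\binom{2m}{m}$; these are algebraic and not rational. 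Since $\dim\Omega^jN_i$ grows like $|j|$ and is genuinely given by different quasi-polynomials for $j\gg0$ and $j\ll0$ (e.g.\ $\dim\Omega^jk=2|j|+1$ for $\bZ/2\times\bZ/2$), you cannot avoid the split into $H_i^{\pm}$, so the obstruction you yourself flag as ``a secondary technical point'' is in fact the heart of the matter and is not resolved. Your argument does correctly show that $\cc_n^G(M)$ is a finite sum of diagonal-type extractions from bivariate rational functions, hence (plausibly) that $f_M(t)$ is algebraic and $\npj_G(M)$ an algebraic number; but eventual recursiveness, i.e.\ rationality of $f_M(t)$, does not follow, which is presumably why the authors left it as a conjecture even for Omega-algebraic modules.
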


We can calculate $\npj_G(M)$ for an Omega-algebraic module
$M$ as follows, using the methods of Section~\ref{se:calc}.
First we restrict to the subspace $V$ of $\Ahat(G)$ generated by
all the indecomposable summands of all the tensor powers
of $M$, together with all their syzygies.
Choose representatives of the $\Omega$-orbits of these
indecomposable summands, say $M_1,\dots,M_d$; by
hypothesis there are only finitely many. Each $M\otimes M_i$ decomposes as
a direct sum of modules of the form $\Omega^m(M_j)$. 
The operation of tensoring with $M$ gives us a
$d\times d$ matrix $X(\Omega)$,
whose entries are Laurent polynomials in the operator $\Omega$
which have non-negative coefficients.

Now, for $\lambda\in\bC$
with $|\lambda|=1$, form the quotient $V/(\Omega-\lambda)$ of
$V$ by the linear span of the elements 
$[\Omega^{m+1}(M_j)]-\lambda[\Omega^m(M_j)]$
with $m\in\bZ$, $1\le j \le d$. This quotient is
a finite dimensional vector space with
basis the images of the $[M_j]$; the matrix corresponding
to tensoring with $M$ is $X( \lambda)$, meaning that we
substitute $\lambda$ for $\Omega$. Since the Laurent
polynomials in $\Omega$ have non-negative coefficients
we have $|X(\lambda)| \le X(1)$, so we can apply
Lemma~\ref{le:Omega-eq-1} to see that
$\rho(X(\lambda)) \le \rho(X(1))$. It follows, using the
discussion at the beginning of Section~\ref{se:calc}, 
that $\npj_G(M)$ is the largest eigenvector of the 
matrix $X(1)$. In particular, it is an algebraic integer.

\section{Some examples}\label{se:eg}

Example~\ref{eg:3x3b} is an example of an Omega-algebraic module
which is not algebraic. Here are some more complicated examples.
The computations use the methods outlined in the previous section and in Section~\ref{se:calc}.

\begin{eg}\label{eg:Omega-alg3}
Let $G=\langle g,h\rangle \cong\bZ/3\times\bZ/3$ and $k=\bF_3$.
Let $M_6$ be the six-dimensional $kG$-module given by the following
matrices, which has the diagram shown:
\[ g\mapsto\begin{pmatrix}
1&0&1&0&0&0\\ 0&1&0&0&0&0\\ 0&0&1&0&1&0\\
0&0&0&1&0&1\\ 0&0&0&0&1&0\\ 0&0&0&0&0&1
\end{pmatrix}\qquad
h\mapsto\begin{pmatrix}
1&0&0&1&0&0\\ 0&1&0&0&1&0\\ 0&0&1&0&0&1\\
0&0&0&1&0&0\\ 0&0&0&0&1&0\\ 0&0&0&0&0&1
\end{pmatrix}\qquad
\vcenter{\xymatrix@=4mm{&\bul\ar@{-}[dl]\ar@{-}[dr]&&
\bul\ar@{-}[dl]\ar@{-}[dr] \\
\bul\ar@{-}[dr]&&\bul\ar@{-}[dl]&&\bul\\&\bul}} \]
For an explanation of the diagrams in this section, see Example~\ref{eg:3x3b}.
Then $M_6$ is Omega-algebraic. Letting
$M_3=k_{\langle g\rangle}{\uparrow^G}$ and $P$ be the projective
indecomposable module, we have
\begin{align*}
M_6 \otimes M_6 &\cong \Omega(M_6) \oplus \Omega^{-1}({M_6}^*)
\oplus M_3    \oplus P  \\
M_6 \otimes {M_6}^* &\cong
\Omega^{-1}(M_6) \oplus \Omega({M_6}^*) \oplus M_3 \oplus P \\
M_6 \otimes M_3 &\cong \Omega(M_3) \oplus \Omega(M_3) \oplus
\Omega(M_3).
\end{align*}
The rows of the table in Figure~\ref{fig:Omega-alg3}
give the result of tensoring a module in the first column with $M_6$,
and writing it as a direct sum of the syzygies of the modules listed
in the first row of the table.

\begin{figure}[htb]
\[ \begin{array}{l|cccc}
&M_6&{M_6}^*&M_3&P \\ \hline
M_6& \Omega & \Omega^{-1} & 1 & 1 \\
{M_6}^*&\Omega^{-1} & \Omega & 1 & 1 \\
M_3 & 0 & 0 & 3\Omega & 0 \\
P & 0 & 0& 0 & 6
\end{array} \]
\caption{Table for Example~\ref{eg:Omega-alg3}}\label{fig:Omega-alg3}
\end{figure}

By the argument described at the end of Section~\ref{se:Omega},
we delete the projectives in this table and replace $\Omega$ by $1$, to
obtain the matrix
\[ \begin{pmatrix} 1&1&1 \\ 1&1&1\\ 0&0&3 \end{pmatrix} \]
Then we find the
eigenvalues of this matrix, which are $3$, $2$ and $0$. The largest of these is $3$,
so $\npj_G(M_6)=3$.
\end{eg}

\begin{eg}\label{eg:Omega-alg2}
Let $G=\langle g,h\rangle\cong\bZ/3 \times \bZ/3$ and $k=\bF_3$. This
time, let $M_6$ be the
self-dual six dimensional module given by the following matrices,
which has the diagram shown:
\[ g\mapsto \begin{pmatrix}
1&1&0&0&0&0\\ 0&1&0&0&0&0\\ 0&0&1&1&0&0\\
0&0&0&1&0&0\\ 0&0&0&0&1&1\\ 0&0&0&0&0&1
\end{pmatrix}\qquad
h\mapsto \begin{pmatrix}
1&0&0&1&0&0\\ 0&1&0&0&0&0\\ 0&0&1&0&0&1\\
0&0&0&1&0&0\\ 0&0&0&0&1&0\\ 0&0&0&0&0&1
\end{pmatrix} \qquad
\vcenter{\xymatrix@=4mm{\bul \ar@{-}[dr]&&\bul\ar@{-}[dl]\ar@{-}[dr]
&&\bul\ar@{-}[dl]\ar@{-}[dr]\\&\bul&&\bul&&\bul}} \]
Then $M_6$ is Omega-algebraic. There are modules $M_{15}$, $M_{21}$,
$M_{27}$, ${M_{27}}^*$, $M_{39}$, ${M_{39}}^*$ and the
projective indecomposable module $P$, such that the rows of the table in
Figure~\ref{fig:Omega-alg2} give the result of tensoring with $M_6$.

\begin{figure}[htb]
{\tiny
\[ \renewcommand{\arraystretch}{1.4}
\begin{array}{l|cccccccccccccccc}
&M_6 & M_{15} & M_{21}&M_{27}&{M_{27}}^*&M_{39}&{M_{39}}^*&M_{66}& P \\ \hline
M_6&&1&1\\
M_{15}&2&&&&&1&1\\
M_{21}&1&&&1&1&&&1\\
M_{27}&&&&2&2&&&&6 \\
{M_{27}}^*&&&&2&2&&&&6 \\
M_{39}&\Omega&&1&1&1&&2\Omega&&6 \\
{M_{39}}^* &\Omega^{-1}&&1&1&1&2\Omega^{-1}&&&6 \\
M_{66} &&&2&2&2&\Omega^{-1}&\Omega&&18 \\
P&&&&&&&&&6
\end{array} \]
}
\caption{Table for Example~\ref{eg:Omega-alg2}} \label{fig:Omega-alg2}
\end{figure}

If we throw out the projectives, and replace $\Omega$ by $1$,
the largest real eigenvalue of the remaining matrix will give the
value of $\gamma_G(M)$. In this example, the eigenvalues
are $4$, $3$, $1$, $0$, $0$, $0$, $-2$, $-2$, so $\npj_G(M)=4$.
\end{eg}

\begin{eg}\label{eg:Omega-alg}
Let $G=\bZ/5 \times \bZ/5$, and $k=\bF_5$. Let $M_3$ be the
three dimensional
module $kG/\Rad^2(kG)$. Then $M_3$ is Omega-algebraic. More precisely,
there are modules $M_6$, $M_8$, $M_{10}$, $M_{15}$, $M_{30}$,
$M_{35}$, $M_{45}$ and $M_{65}$ such that $M_8$, $M_{35}$ and $M_{65}$
are self-dual, and the rows of the table in Figure~\ref{fig:Omega-alg}
give the effect of tensoring with $M_3$.

\begin{figure}[htb]
{\tiny
\[ \renewcommand{\arraystretch}{1.4}
\begin{array}{l|cccccccccccccccc}
&k&M_3&{M_3}^*&M_6&{M_6}^*&M_8&M_{10}&{M_{10}}^*&M_{15}&{M_{15}}^*&
M_{30}&{M_{30}}^*&M_{35}&M_{45}&{M_{45}}^*&M_{65}\\ \hline
\ k         &&1\\
M_3        &&&1&1\\
{M_3}^*   &1&&&&&1\\
M_6        &&&&&&1&1\\
{M_6}^*   &&&1&&&&&&&1\\
M_8        &&1&&&1&&&&1\\
M_{10}     &&&&&&&&\Omega^{-1}&1\\
{M_{10}}^*&&&&&&&&&&&&1\\
M_{15}     &&&&&&&&&&1&1\\
{M_{15}}^*&&&&&&&&1&&&&&1\\
M_{30}     &&&&&&&2&&&\Omega^{-1}&&&1\\
{M_{30}}^*&&&&&&&\Omega&&&2&&&&&1\\
M_{35}     &&&&&&&&&2&&&1&&1\\
M_{45}     &&&&&&&&&\Omega^{-1}&&1&&&&1\\
{M_{45}}^*&&&&&&&&&\Omega^{-1}&&&&1&&&1\\
M_{65}     &&&&&&&\Omega^{-1}&&\Omega&&&&&1
\end{array} \]
}
\caption{Table for Example~\ref{eg:Omega-alg}} \label{fig:Omega-alg}
\end{figure}

Replacing $\Omega$ by $1$, the characteristic polynomial is
\[ (x^2-3x+1)(x^6-4x^3-1)
(x^8+3x^7+8x^6+6x^5+5x^4-x^3+12x^2+7x+9). \]
So writing $\tau$ for $(1+\sqrt{5})/2$, the eigenvalues are $1+\tau$ and $2-\tau$;
$\tau$ and $1-\tau$
times cube roots of unity;
and the roots of a
degree eight irreducible with no real roots. The largest real
eigenvalue is $1+\tau\approx 2.618$, and so $\npj_G(M)=1+\tau$.
This happens to be the same as $\npj_H(M)$ for each cyclic subgroup
$H$ of $G$ in this case. This is because the restriction is $k\oplus
M_2$, so $\npj_H(M)$ can be computed using Theorems~\ref{th:k-plus-M}
and~\ref{th:Z/p}.

\end{eg}\pagebreak[3]

The next example shows how we can make use of
examples already computed.

\begin{eg}
Let $G=\bZ/3\times \bZ/3=\langle g,h\rangle$ and $k=\bF_3$.
Let $M_5$ be the five dimensional $kG$-module given by the following
matrices, which has the diagram shown:
\[ g\mapsto \begin{pmatrix}
	1&0&0&0&0\\ 0&1&1&0&0\\
	0&0&1&0&0\\ 0&0&0&1&1\\ 0&0&0&0&1
	\end{pmatrix}\qquad
	h\mapsto \begin{pmatrix}
	1&0&1&0&0\\ 0&1&0&0&1\\
	0&0&1&0&0\\ 0&0&0&1&0\\ 0&0&0&0&1
	\end{pmatrix} \qquad
	\vcenter{\xymatrix@=4mm{&\bul\ar@{-}[dl]\ar@{-}[dr]&&
	\bul\ar@{-}[dl]\ar@{-}[dr]
	\\\bul&&\bul&&\bul}}  \]
Consider the restriction of $M_5$ to the cyclic subgroup 
generated by $g$. This is a sum of the trivial module and 
two copies of the indecomposable module of dimension 2. 
By Theorem~\ref{th:Z/p} we have 
$\npj_{\langle g \rangle}(M_5)=1 + 4 \cos (\pi/3) = 3$ 
and by Theorem~\ref{le:subgroup} $\npj_G(M_5) \ge 3$.
	
There is a short exact sequence
\[ 0 \to M_5 \to \Omega^{-1}(M_3) \to k \to 0, \]
where $M_3$ is the three dimensional module
of Example~\ref{eg:3x3b}. Since $\npj_G(M_3)=2$,
by Theorem~\ref{th:Omega} we have $\npj_G(\Omega^{-1}(M_3))=2$.
So using Corollary~\ref{co:ses} we have $\npj_G(M_5)\le 3$.
Combining these bounds, we have $\npj_G(M_5)=3$. This example is quite
difficult to compute directly.
\end{eg}

Not all modules are Omega-algebraic. For example, 
the module of Example~\ref{eg:3x3b} inflated to $(\bZ/3)^3$,
with one of the factors acting trivially, is not Omega-algebraic. It is less clear whether
faithful modules can fail to be Omega-algebraic, but some evidence is
provided by the following example.

\begin{eg}\label{eg:non-Omega-alg}
Calculations using \textsf{Magma} give the following.
Let $M$ be the self-dual four dimensional module for
$G=\langle g,h\rangle\cong\bZ/3 \times \bZ/3$ over 
$\mathbb F_3$ given by the following matrices,
which has the diagram shown:
\[ g \mapsto \begin{pmatrix} 1&1&0&0\\0&1&0&0\\0&0&1&1\\0&0&0&1\end{pmatrix}
\qquad
h \mapsto \begin{pmatrix}  1&0&0&1\\0&1&0&0\\0&0&1&0\\0&0&0&1\end{pmatrix}
\qquad
\vcenter{\xymatrix@=4mm{\bul \ar@{-}[dr]&&\bul\ar@{-}[dl]\ar@{-}[dr]\\&\bul&&\bul}} \]
The beginning of the pattern is as follows:\pagebreak[3] 
\begin{align*}
M\otimes M\ &\cong k \oplus V_2 \oplus W &
M \otimes X\ & \cong 2X \oplus X^* \oplus \proj \\
M\otimes V_2\, &\cong M \oplus V_3 &
M \otimes X^* & \cong 2X^* \oplus X \oplus \proj \\
M \otimes W\, &\cong M \oplus X \oplus X^*
\end{align*}
Here, $V_2$, $W$ and $V_3$ are self-dual of dimensions 5, 10 and 16, 
$X$ has dimension 18, and its dual satisfies
$X^* \cong \Omega(X)\cong\Omega^{-1}X$.

Thereafter, at least as far as we have calculated,
there are self-dual indecomposable modules
denoted $V_i$ for all $i \ge 1$, where $V_1 = M$ 
and $V_2$ and $V_3$ are the same modules as above. 
The dimension of $V_i$ is $3i+a(i)$, where $a(i)$ 
depends only on the residue class of $i$ modulo 6.
\[
\begin{array}{c|cccccc}
i \pmod{6} & 0 & 1 & 2 & 3 & 4 & 5 \\
\hline
a(i) & 11 & 1 & -1 & 7 & 2 & 7
\end{array}
\]
The tensor product of $M$ with $V_i$ for $i\ge 2$ appears to be given by
\[ M \otimes V_i \cong \begin{cases}
V_{i-1} \oplus V_{i+1} \oplus \proj & \text{if } 3\nmid i \\
V_{i-1} \oplus V_{i+1} \oplus X \oplus X^* \oplus \proj & \text{if } 3\mid i.
\end{cases} \]
We have verified this pattern for $2\le i\le 35$.
Notice that $[X \oplus X^*]$ is an eigenvector of multiplication
by $[M]$ with eigenvalue 3;
this does not depend on having correctly spotted the pattern. 
Thus $\npj_G(M) \ge 3$.

Now consider the diagram that describes $M$.
If we remove the right  hand vertex we obtain the diagram
for the three dimensional module of Example~\ref{eg:3x3b},
which we will denote by $M_3$.
We know that $\npj_G(M_3) = 2$ and there is a short exact sequence
\[ 0 \to k \to M \to M_3 \to 0. \]
By Corollary~\ref{co:ses} we have $\npj_G(M) \le \npj_G(M_3) +
\npj_G(k) =3$.
Thus $\npj_G(M) \le 3$. Combining this with the previous bound gives
a proof that $\npj_G(M)=3$.

It appears that this example is not Omega-algebraic, but it still seems to satisfy
Conjecture~\ref{conj:er}, although we have not been able to write down
a proof of this.
\end{eg}

We end with a conjecture which is closely related to Conjecture~E of
Craven~\cite{Craven:2011a}, and which is motivated by extensive
computations using \textsf{Magma}.

\begin{conj}
If $M$ is an absolutely indecomposable module for $\bZ/p\times \bZ/p$
and the dimension of $M$ is divisible by $p$ then $M$ is
Omega-algebraic.
\end{conj}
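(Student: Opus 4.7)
The plan is to exploit the special rank-$2$ structure of $G = \bZ/p\times\bZ/p$ together with the machinery of rank varieties and shifted cyclic subgroups. Every $kG$-module $N$ has a rank variety $V_G(N) \subseteq \mathbb{A}^2(k)$, and restriction along a shifted cyclic subgroup corresponding to a point $v \in \mathbb{A}^2(k)$ produces a $k[\bZ/p]$-module whose Jordan type encodes a local invariant of $N$. Because $M$ is absolutely indecomposable with $p \mid \dim M$, a careful analysis of these local types ought to control the isomorphism classes (and hence the $\Omega$-orbits) of the indecomposable summands of the tensor powers $M^{\otimes n}$.

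First, I would establish that the generic Jordan type of a $kG$-module (the Jordan type of its restriction along a generic shifted cyclic subgroup) is multiplicative under $\otimes$ and is shifted only in a controlled way by $\Omega$. Since the generic Jordan types for $k[\bZ/p]$-modules decompose according to the Clebsch--Gordan rules exhibited in Section~\ref{se:cyclic}, it should follow that the collection of generic Jordan types occurring among the indecomposable summands of the various $M^{\otimes n}$ is a finite set, independent of $n$.

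Second, I would upgrade this local picture to a global one. For each fixed generic Jordan type $\lambda$, the indecomposable $kG$-modules whose generic type is $\lambda$ naturally fit into a bounded family parametrized by directions on the projectivization $\mathbb{P}^1$ of the rank variety, modulo the action of $\Omega$. The hypothesis $p \mid \dim M$ should then force the summands of $M^{\otimes n}$ to correspond to a fixed finite subset of $\mathbb{P}^1$, yielding finitely many $\Omega$-orbits overall. At this point the proof would reduce to a matrix calculation of the type used in Section~\ref{se:Omega}.

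The main obstacle is bridging the generic picture on $\mathbb{P}^1$ and the actual module category, which has wild representation type for $p \ge 3$. Non-generic Jordan types and subtleties at special points of the rank variety can in principle produce extra indecomposable summands that are a priori hard to enumerate, and ruling them out is where the dimension-divisibility hypothesis must be used in an essential way. This difficulty seems closely linked to Craven's Conjecture~E in~\cite{Craven:2011a}, and I expect that any proof will need to address both conjectures at once, probably via a refined invariant built from the full Jordan type stratification rather than the generic type alone.
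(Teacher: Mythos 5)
The statement you have attempted to prove is presented in the paper as an \emph{open conjecture}: the authors give no proof, only the remark that it is closely related to Conjecture~E of Craven~\cite{Craven:2011a} and is motivated by extensive \textsf{Magma} computations. So there is nothing in the paper to compare your argument against, and the only question is whether your proposal settles the problem. It does not. As you yourself concede in the final paragraph, the passage from the generic picture on the rank variety to an actual finiteness statement about $\Omega$-orbits of summands of $M^{\otimes n}$ is precisely the open difficulty, and nothing in the sketch resolves it; what you have written is a research programme, not a proof.

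Two concrete points where the sketch breaks down. First, the claim that the indecomposables with a fixed generic Jordan type $\lambda$ form a ``bounded family parametrized by $\mathbb{P}^1$ modulo $\Omega$'' is false: for $p\ge 3$ the algebra $k[\bZ/p\times\bZ/p]$ has wild representation type, and already the modules of constant Jordan type include indecomposables of unbounded dimension sharing the same stable generic type; even for $p=2$, where the classification is known, a single generic type is realized by infinitely many pairwise non-$\Omega$-equivalent indecomposables of growing length. Generic Jordan type is far too coarse an invariant to cut the summands down to finitely many $\Omega$-orbits. Second, your first step---that only finitely many generic Jordan types occur among the summands of all the $M^{\otimes n}$---tacitly presupposes a bound on the dimensions of the non-projective summands modulo $\Omega$, which is essentially the conclusion to be proved: the generic type of $M^{\otimes n}$ is the $n$-th Clebsch--Gordan power of that of $M$ and involves unboundedly many non-projective blocks as $n\to\infty$, and there is no a priori reason these distribute among summands whose individual types lie in a fixed finite list. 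Example~\ref{eg:non-Omega-alg} (a four-dimensional module, with dimension \emph{not} divisible by $p=3$, whose summands $V_i$ have dimension growing linearly in $i$) shows that such a bound genuinely hinges on the divisibility hypothesis, and your sketch does not identify where that hypothesis would enter beyond the hope expressed in the last paragraph.
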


\newcommand{\noopsort}[1]{}
\providecommand{\bysame}{\leavevmode\hbox to3em{\hrulefill}\thinspace}
\providecommand{\MR}{\relax\ifhmode\unskip\space\fi MR }
\providecommand{\MRhref}[2]{%
  \href{http://www.ams.org/mathscinet-getitem?mr=#1}{#2}
}
\providecommand{\href}[2]{#2}

\end{document}